\documentclass[11pt]{article}
 \usepackage{graphicx,amsmath,amsfonts,amssymb,color,mathrsfs}
\usepackage{leftidx}

\makeatletter
\newcommand*\bigcdot{\mathpalette\bigcdot@{1.4}} % 1.4 = size factor; tweak to taste
\newcommand*\bigcdot@[2]{%
  \mathbin{\vcenter{\hbox{\scalebox{#2}{$\m@th#1\cdot$}}}}%
}
\makeatother

\usepackage[ruled,vlined,linesnumbered]{algorithm2e}
\usepackage{mathtools}
\usepackage{enumitem}

\usepackage{amsmath}

\usepackage{epsfig}

 \newcommand{\QED}{\hfill \thicklines \framebox(6.6,6.6)[l]{}}
 \newenvironment{proof}{\noindent {\bf \sc Proof.} \rm}{\QED}

\numberwithin{equation}{section}

 \newtheorem{theorem}{Theorem}[section]
 \newtheorem{lemma}{Lemma}[section]
 \newtheorem{proposition}{Proposition}[section]
 
 \newtheorem{remark}{Remark}[section]

 \newtheorem{corollary}{Corollary}[section]
 \newtheorem{definition}{Definition}[section]

\newcommand{\eqnb}{\begin{eqnarray*}}
\newcommand{\eqne}{\end{eqnarray*}}

\def\beqlb{\begin{eqnarray}}\def\eeqlb{\end{eqnarray}}
 \def\beqnn{\begin{eqnarray*}}\def\eeqnn{\end{eqnarray*}}

\makeatletter
 \newif\if@borderstar
 \def\bordermatrix{\@ifnextchar*{%
 \@borderstartrue\@bordermatrix@i}{\@borderstarfalse\@bordermatrix@i*}%
 }
 \def\@bordermatrix@i*{\@ifnextchar[{\@bordermatrix@ii}{\@bordermatrix@ii[()]}}
 \def\@bordermatrix@ii[#1]#2{%
 \begingroup
 \m@th\@tempdima8.75\p@\setbox\z@\vbox{%
 \def\cr{\crcr\noalign{\kern 2\p@\global\let\cr\endline }}%
 \ialign {$##$\hfil\kern 2\p@\kern\@tempdima & \thinspace %
 \hfil $##$\hfil && \quad\hfil $##$\hfil\crcr\omit\strut %
 \hfil\crcr\noalign{\kern -\baselineskip}#2\crcr\omit %
 \strut\cr}}%
 \setbox\tw@\vbox{\unvcopy\z@\global\setbox\@ne\lastbox}%
 \setbox\tw@\hbox{\unhbox\@ne\unskip\global\setbox\@ne\lastbox}%
 \setbox\tw@\hbox{%
 $\kern\wd\@ne\kern -\@tempdima\left\@firstoftwo#1%
 \if@borderstar\kern2pt\else\kern -\wd\@ne\fi%
 \global\setbox\@ne\vbox{\box\@ne\if@borderstar\else\kern 2\p@\fi}%
 \vcenter{\if@borderstar\else\kern -\ht\@ne\fi%
 \unvbox\z@\kern-\if@borderstar2\fi\baselineskip}%
 \if@borderstar\kern-2\@tempdima\kern2\p@\else\,\fi\right\@secondoftwo#1 $%
 }\null \;\vbox{\kern\ht\@ne\box\tw@}%
 \endgroup
 }
 \makeatother
\title{\Large \bf GTH Algorithm, Censored Markov Chains, and $RG$-Factorization in Block-Form \\
(dedicated to Dr. Winfried Grassmann)}
\author{Qihui Bu\footnote{School of Communications and Information Engineering, Nanjing University of Posts and Telecommunications, buqihui@njupt.edu.cn} \and Yiqiang Q. Zhao\footnote{School of Mathematics and Statistics, Carleton University, email:zhao@math.carleton.ca}
}
\date{Revised February 2026}

\begin{document}
 \maketitle

\begin{abstract}

In 1985, Grassmann, Taksar, and Heyman~\cite{GTH:1985} published their celebrated paper,
in which they introduced a numerically stable algorithm
for computing the stationary probabilities of a finite-state Markov chain,
one of the key performance quantities in both theory and applications.
This algorithm later became the well-known GTH algorithm (or the state-reduction method)
in the literature, becoming one of the standard algorithms in applied probability.
In 1990, motivated by queueing applications, this algorithm was extended
to deal with the stationary distributions
of block-structured Markov chains with repeating rows by
Grassmann and Heyman~\cite{Grassmann-Heyman:1993}.

In this paper, we focus on the block-form GTH algorithm and organize it into two parts.
In the first part, we connect the block-form GTH algorithm to censored Markov chains
and the block-form $RG$-factorization.
We show that the forward block-elimination and back block-form substitution
of the block-form GTH algorithm are equivalent to solving a
system formulated using the $RG$-factorization in two steps.
We also show that this connection
remains valid
when the block-form GTH algorithm is extended to infinite-state Markov chains.
It is well known that censoring an infinite-state Markov chain to a finite state space
yields a stationary distribution that
provides a best approximation to the stationary distribution of the original
infinite-state Markov chain.

In the second part, we first derive an explicit expression for the censored Markov chain
from the infinite state space to a finite space for Markov chains of $M/G/1$ type.
Based on this expression,
we propose a renormalized approximated censored transition matrix (RA-CM).
The resulting stationary distribution is shown to be asymptotically optimal
in terms of approximation error.
We compare the approximation error of the RA-CM with the error arising from the
last-block-column augmentation.

\vspace*{5mm}

\noindent \textbf{Keywords:}
block-form GTH algorithm;
block-form Gaussian elimination;
block-structured Markov chains;
censored Markov chains;
block-form $RG$-factorization;
stationary distributions and performance measures;
block-form augmentations.

\medskip

\noindent \textbf{Mathematics Subject Classification (2000):}
60J10 $\cdot$ 60J22
\end{abstract}

\section{Introduction}

Stationary probabilities are crucial for system performance,
when the underlying system is modeled using a Markov chain.
Explicit expressions for the stationary distribution are available for only a small set of problems.
In most of the applications, the main tools for performance evaluation and optimization are numerical computations, approximations, or simulations.
Therefore, computational methods for Markov chains are very important.
Algorithms, like the GTH, are usually presented for a finite-state Markov chain,
since computers can only deal with finitely many states.
If the model consists of infinitely many states,
a standard method is to use truncation to approximate the infinite-state transition matrix by a finite-state matrix, and hope the stationary distribution of the finite-state chain is an approximation of the infinite-state chain, for example, the well-known augmented Markov chain approach.

The GTH algorithm discussed in this paper was proposed by Grassmann, Taksar, and Heyman in 1985~\cite{GTH:1985}.
We should also give credit to Sheskin~\cite{Sheskin:1985}, who provided a partitioning algorithm in the same year,
which is equivalent to the GTH algorithm.
Due to the computational stability of the GTH algorithm,
it soon became very attractive to many researchers.
A list of publications by 1993, in which the GTH algorithm was used or studied,
can be found in Grassmann~\cite{Grassmann:1993}, including:
Kohlas~\cite{Kohlas:1986},
Heyman~\cite{Heyman:1987},
Heyman and Reeves~\cite{Heyman-Reeves:1989},
Grassmann and Heyman~\cite{Grassmann-Heyman:1990},
Stewart~\cite{Stewart:1993},
O'Cinneide~\cite{O'Cinneide:1993}.
The list of references closely related to the GTH algorithm,
published after 1993, is very large,
and only a small sample is given here:
Stewart~\cite{Stewart:1994},
Dayar and Stewart~\cite{Dayar-Stewart:1996},
Grassmann and Zhao~\cite{Grassmann-Zhao:1997},
Sonin and Thornton~\cite{Sonin-Thornton:2001},
Dayar and Akar~\cite{Dayar-Akar:2005},
Hunter~\cite{Hunter:2018}.
Further related references can be found in the cited works.

This paper consists of two parts.
The first part is mainly a survey of the block-form GTH algorithm.
In this part, we detail the connections between the block-form GTH algorithm and censored Markov chains.
We explain that the forward block-elimination in the block-form GTH algorithm
is the process of constructing a sequence of censored processes with the censoring set reduced to only one level.
In terms of censoring, the GTH algorithm can be formally extended to infinite-state Markov chains,
but it cannot be practically implemented,
since for an infinite matrix one cannot start an initial elimination step,
which is equivalent to computing a censored matrix with finite state.
It is a literature result (Zhao and Liu~\cite{Zha:96}) that
when augmentations are used for approximating the stationary distribution of an infinite-state Markov chain,
the censored chain is best in approximation error under the $\ell_1$ norm.
In this part, we also show that the forward elimination and back substitution steps of the GTH algorithm
amount to the same process of solving the equations $I-P=0$,
where $P$ is the transition matrix,
in two steps by using the $RG$-factorization of $I-P$. This equivalence was observed in Zhao~\cite{Zhao:2020} for the scalar case without detailed discussion.
The connection between the GTH algorithm and censored Markov chains
existed since the introduction of the GTH algorithm (see \cite{Sheskin:1985}).
Later, in \cite{Grassmann-Heyman:1990}, this connection was discussed explicitly for block-structured Markov chains,
and in the same paper,
the censored Markov chain was used to formally extend the GTH algorithm to infinite-state Markov chains.

When the stationary distribution of an infinite-state Markov chain is our focus,
based on the discussion in the first part,
it is clear that the censored chain,
or the finite transition matrix constructed by the formal initial elimination,
gives a distribution with minimal approximation error.
However, in general, this initial elimination is not computationally available.
The focus of the second part of this paper is to compute a finite transition matrix for Markov chains of $M/G/1$ type
(referred to as the renormalized approximated censored matrix, or simply RA-CM),
which is an approximation to the censored chain.
Therefore, it is asymptotically best in approximation error.
Both the censored Markov chain and our proposed approximation
are special cases of augmentations,
which are transition matrices obtained by augmenting the north-west corner
of the original transition matrix to make it stochastic.
Studies of augmentations date back to the 1960s,
for example, the work of Seneta~\cite{Seneta:1967, Seneta:1968},
which focused on conditions under which the stationary distribution
of the augmented chain approximates that of the original infinite-state chain.
Among all augmentations,
the last-column (or last-block-column) augmentation approach deserves special attention,
since it is very easy to implement and often leads to approximations with small errors.
For example, it was proved that when the block-structured transition matrix is block monotone,
the natural-last-block-column augmentation also produces a best approximation
(see Li and Zhao~\cite{Li-Zhao:2000}).
In the second part,
we also compare approximation errors of our proposed RA-CM
and the natural-last-block-column augmentation
for a queueing system with batch arrivals.

In the following, we highlight our main new contributions in this paper:
\begin{description}
  \item[1.] The first part (Sections~2--4) is mainly a survey of the GTH algorithm,
  together with basics of censored Markov chains and the $RG$-factorization.
  Our new contribution is:
  \begin{description}
    \item[(a)] The equivalence between the forward block-eliminations
    and the back block-form substitutions in the block-form GTH algorithm
    and a two-step solution provided by the $RG$-factorization
    (see Section~\ref{sec:RG-F}).
  \end{description}

  \item[2.] The main contributions in the second part (Sections~5--7) include:
  \begin{description}
    \item[(b)] Construction of an approximation (referred to as RA-CM)
    to the censored process to any desired accuracy
    (see Theorem~\ref{thm:truncation-error}).
    The stationary distribution of the RA-CM is asymptotically optimal under the $\ell_1$ norm
    as an approximation to the stationary distribution
    of the original Markov chain of $M/G/1$ type
    among all augmentation methods
    (see Section~\ref{sec:RA-CM}).
    \item[(c)] An explicit expression for the censored matrix
    of Markov chains of $M/G/1$ type
    (see Theorem~\ref{thm:QmD-censored-clean}).
    This result plays a key role in the construction of the RA-CM
    and is also of independent interest.
    \item[(d)] Proof of positive recurrence of the RA-CM
    (see Theorem~\ref{the:convergence-rowwise}).
    \item[(e)] Theorem~\ref{thm:global-l1},
    which guarantees convergence of the stationary distributions
    of the natural-last-block-column augmentation
    to that of the original $M/G/1$ type Markov chain.
    This result is needed for approximation-error comparisons
    in the application example of Section~\ref{sec:example}.
    \item[(f)] An application presented in Section~\ref{sec:example},
    namely an $M^X/M/1$ queueing system with batch arrivals.
    A special case allowing only one service rate during working vacations
    was studied in \cite{Baba:2012}.
  \end{description}
\end{description}

The rest of the paper is organized as follows:
the first part of this paper consists of Sections~\ref{sec:2}--\ref{sec:RG-F}.
The block-form GTH algorithm is introduced and discussed in Section~\ref{sec:2};
the censored Markov chain and its main properties are reviewed in Section~\ref{sec:censoring}.
In terms of censoring,
the GTH algorithm is formally extended to infinite-state Markov chains,
and the forward elimination process is equivalent to constructing a sequence of censored chains.
The block-form $RG$-factorization is generalized to a block-partitioned transition matrix
in Section~\ref{sec:RG-F},
which is a probabilistic counterpart of the $LU$-decomposition
in Gaussian elimination.
We show the connection between the $RG$-factorization and the GTH algorithm.

The second part of this paper consists of Sections~\ref{sec:RA-CM}--\ref{sec:example}.
A renormalized approximated (truncated) censored matrix (RA-CM)
is proposed in Section~\ref{sec:RA-CM}, which is asymptotically best in approximation;
last-block-column augmentations (LBCA)
are introduced in Section~\ref{sec:LBCA}
for comparison with the RA-CM;
a queueing system with batch arrivals and vacations
is studied in Section~\ref{sec:example},
which generalizes earlier results in the literature.
We compare the approximation performance of the RA-CM and the natural-LBCA.

The final section, Section~\ref{sec:final},
contains concluding remarks.

\section{Block-form GTH algorithm} \label{sec:2}

The (standard) GTH algorithm, consisting of two steps: forwarding state reduction (or variable elimination) and backward substitution, is a numerically stable algorithm for computing the stationary distribution of a finite-state irreducible Markov chain. Each step of the GTH algorithm has a probabilistic interpretation.
Mathematically, the GTH algorithm is a rearrangement of Gaussian elimination. The GTH algorithm is numerically stable, since it starts with the smallest components and avoids subtractions, while the (standard) Gaussian elimination can become numerically unstable if the number of states becomes large. For the standard GTH algorithm, readers can refer to \cite{GTH:1985}. Our purpose is to connect the block-form GTH algorithm to the censored Markov chain in block-form, and the block-form $RG$-factorization. With the connection to the censored chain, the block-form GTH algorithm can be formally extended to infinite-state Markov chains.

We start by introducing Markov chains in block-form. Let $S$ be the state space defined as
\[
    S= \cup_{i=0}^\infty L_i,
\]
where $L_i = \{ (i,\alpha) : \alpha = 1,2,\ldots, r\}$. Here $i$ and $\alpha$ are referred to as the level and phase variables, respectively.
Let $P$ be the transition probability matrix of a Markov chain, partitioned according to the level and referred to as a Markov chain (or transition matrix) in block-form:
\begin{equation} \label{eqn:Pij}
    P =\left [
\begin{array}{cccc}
P_{0,0} & P_{0,1} & P_{0,2} & \cdots \\
P_{1,0} & P_{1,1} & P_{1,2} & \cdots \\
P_{2,0} & P_{2,1} & P_{2,2} & \cdots \\
\vdots & \vdots & \vdots & \ddots%
\end{array}
\right ].
\end{equation}

The block-form GTH algorithm is an extension of the (standard) GTH algorithm for computing the unique stationary distribution
\[
    \pi = (\pi_0, \pi_1, \ldots, \pi_N)
\]
with $\pi_i =(\pi_{i,1}, \pi_{i,2}, \ldots, \pi_{i,r})$ ($i=0, 1, \ldots, N$)
of an irreducible block-form Markov chain $P$ with finite levels:
\begin{equation} \label{eqn:Pij-N}
    P = \left [  \begin{array}{ccccc}
P_{0,0} & P_{0,1} & P_{0,2} & \cdots \cdots & P_{0,N} \\
P_{1,0} & P_{1,1} & P_{1,2} & \cdots \cdots & P_{1,N} \\
P_{2,0} & P_{2,1} & P_{2,2} & \cdots \cdots & P_{2,N} \\
\vdots  & \vdots  & \vdots  &  \vdots       & \vdots \\
P_{N,0} & P_{N,1} &P_{N,2} & \cdots \cdots & P_{N,N} \end{array}
       \right ].
\end{equation}
Equivalently, it computes nonnegative $\pi$ such that
\[
    \pi = \pi P \quad \text{with} \quad \sum_{i=0}^N \sum_{\alpha=1}^{r} \pi_{i,\alpha}=1.
\]
Writing out in detail, the stationary equations $\pi = \pi P$ are
\begin{equation} \label{eqn:se}
    \pi_j = \sum_{i=0}^N \pi_i P_{i,j}, \quad j =0, 1, \ldots, N.
\end{equation}
Since all probabilities $\pi_{i,\alpha}$ sum up to one, one of the above $(N+1)r$ equations is redundant.

\noindent \textbf{Block-form GTH algorithm:} The algorithm consists of two parts, forward block-elimination and back block-form substitution.

\textbf{Forward block-eliminations:}  First, starting from the last level, level $n=N$, use the equation in block-form given in (\ref{eqn:se}) to eliminate $\pi_N$ to obtain a system of equations in block-form for $\pi_0$, $\pi_1$, \ldots, $\pi_{N-1}$ corresponding to $N$ levels from 0 to $N-1$. The resulting system from the first block-elimination is given as
\[
    \pi_j = \sum_{i=0}^{N-1} \pi_i P_{i,j}^{\langle N-1 \rangle}, \quad j =0, 1, \ldots, N-1,
\]
where the new coefficients $P_{i,j}^{\langle N-1 \rangle}$ in block-form are determined by
\begin{equation} \label{eqn:forward-(N-1)}
    P_{i,j}^{\langle N-1 \rangle} = P_{i,j}^{\langle N\rangle} + P_{i,N}^{\langle N \rangle} (I-P_{N,N}^{\langle N\rangle})^{-1}P_{N,j}^{\langle N\rangle}
\end{equation}
with $P_{i,j}^{\langle N\rangle} = P_{i,j}$.

It can be directly verified that
\begin{equation} \label{eqn:N-1}
    P^{\langle N-1\rangle} :=  \left [  \begin{array}{cccc}
P_{0,0}^{\langle N-1\rangle} & P_{0,1}^{\langle N-1\rangle} & \ldots \ldots & P_{0,N-1}^{\langle N-1\rangle} \\
P_{1,0}^{\langle N-1\rangle} & P_{1,1}^{\langle N-1\rangle} & \ldots \ldots &P_{1,N-1}^{\langle N-1\rangle} \\
   \vdots & \vdots & \vdots& \vdots \\
P_{N-1,0}^{\langle N-1\rangle} & P_{N-1,1}^{\langle N-1\rangle} & \ldots \ldots & P_{N-1,N-1}^{\langle N-1\rangle} \end{array}
       \right ]
\end{equation}
is a stochastic matrix, and hence defines a new Markov chain.

Continue the block-form elimination process to obtain $\pi_{N-1}$, $\pi_{N-2}$, \ldots, $\pi_{n}$ to have the following system of equations in block-form:
\begin{equation} \label{eqn:se-j}
    \pi_j = \sum_{i=0}^{n-1} \pi_i P_{i,j}^{\langle n-1\rangle}, \quad j =0, 1, \ldots, n-1,
\end{equation}
where the coefficients $ P_{i,j}^{\langle n-1\rangle}$ in block-form are determined by
\begin{equation}\label{eqn:iterations-old}
      P_{i,j}^{\langle n-1\rangle} = P_{i,j}^{\langle n\rangle} + P_{i,n}^{\langle n\rangle} (I-P_{n,n}^{\langle n\rangle})^{-1}P_{n,j}^{\langle n\rangle}.
\end{equation}
The matrix of these coefficients:
\begin{equation} \label{ean:n-1}
    P^{\langle n-1\rangle} :=  \left [  \begin{array}{cccc}
P_{0,0}^{\langle n-1\rangle} & P_{0,1}^{\langle n-1\rangle} & \ldots \ldots & P_{0,n-1}^{\langle n-1\rangle} \\
P_{1.0}^{\langle n-1\rangle} & P_{1,1}^{\langle n-1\rangle} & \ldots \ldots & P_{1,n-1}^{\langle n-1\rangle} \\
   \vdots & \vdots & \vdots& \vdots \\
P_{n-1,0}^{\langle n-1\rangle} & P_{n-1,1}^{\langle n-1\rangle} & \ldots \ldots & P_{n-1,n-1}^{\langle n-1\rangle} \end{array}
       \right ]
\end{equation}
defines a new Markov chain.

After $\pi_1$ is eliminated, we reach a Markov chain corresponding to level 0, given by
\[
   \pi_0 = \pi_0  P_{0,0}^{\langle 0\rangle}.
\]

\textbf{Back block-form substitutions:}
To compute $\pi_j$, the block-form GTH algorithm performs the back block-form substitution. % The first step is different from the standard GTH algorithm, since in the scalar case $\pi_0 = \pi_0  P_{0,0}^{\langle 0\rangle} = \pi_0 \cdot 1$, which is redundant, while in block case, only one of the $r$ equations in $\pi_0 = \pi_0  P_{0,0}^{\langle 0\rangle}$ is redundant. One option is to perform the standard GTH algorithm on the equations in $\pi_0 = \pi_0  P_{0,0}^{\langle 0\rangle}$.
In the scalar case, $\pi_0 = \pi_0  P_{0,0}^{\langle 0\rangle} = \pi_0 \cdot 1$ is redundant in back substitution, while only one of the $r$ equations in $\pi_0 = \pi_0  P_{0,0}^{\langle 0\rangle}$ is redundant in the block case. Thus, the implementation of back substitution in the block case differs from that in the scalar case, particularly in the first step.
One option is to perform the standard GTH algorithm on the equations in $\pi_0 = \pi_0  P_{0,0}^{\langle 0\rangle}$. For this purpose, we denote
\[
    P_{0,0}^{\langle 0\rangle} =
\left [  \begin{array}{cccc}
p_{1,1}^{\langle 0\rangle} & p_{1,2}^{\langle 0\rangle} & \cdots \cdots & p_{1,r}^{\langle 0\rangle} \\
p_{2,1}^{\langle 0\rangle} & p_{2,2}^{\langle 0\rangle} & \cdots \cdots & p_{2,r}^{\langle 0\rangle} \\
   \vdots & \vdots & \vdots  & \vdots \\
p_{r,1}^{\langle 0\rangle} & p_{r,2}^{\langle 0\rangle} & \cdots \cdots & p_{r,r}^{\langle 0\rangle} \end{array}
       \right ].
\]
The forward (scalar) elimination, after the elimination of $\pi_{0,r}$, $\pi_{0,r-1}$, \ldots, $\pi_{0,\gamma}$, gives
\begin{equation} \label{eqn:scalar-elimination}
    \pi_{0,\beta} = \sum_{\alpha=1}^{\gamma-1} \pi_{0,\alpha} p_{\alpha,\beta}^{\langle 0, \gamma-1\rangle}, \quad \beta = 1, 2, \ldots, \gamma-1,
\end{equation}
where $p_{\alpha,\beta}^{\langle 0, \gamma-1\rangle}$ is determined by
\[
    p_{\alpha,\beta}^{\langle 0,\gamma-1\rangle} = p_{\alpha,\beta}^{\langle 0, \gamma \rangle} + \frac{p_{\alpha,\gamma }^{\langle 0, \gamma \rangle} p_{\gamma,\beta}^{\langle 0, \gamma \rangle}}{\sum_{k=1}^{\gamma-1} p_{\gamma,k}^{\langle 0, \gamma \rangle}}, \quad \alpha, \beta = 1, 2 \ldots, \gamma -1,
\]
with $p_{\alpha,\beta}^{\langle 0,r \rangle}=p_{\alpha,\beta}^{\langle 0 \rangle}$. When $\gamma$ reaches 2, the equation becomes
\[
    \pi_{0,1} = \pi_{0,1} p_{1,1}^{\langle 0,1 \rangle} = \pi_{0,1} \cdot 1 = \pi_{0,1},
\]
which is redundant. Therefore, we can define
\[
    r_0 = (r_{0,1}, r_{0,2}, r_{0,3}, \ldots, r_{0,r}),
\]
where $r_{0,1}=1$ and $r_{0,\alpha}= \pi_{0,\alpha}/\pi_{0,1}$ for $\alpha =2, 3, \ldots, r$. It is easy to see that $r_{0,\beta}$ also satisfies the equation in \eqref{eqn:scalar-elimination}, or for $\gamma = r, r-1, \ldots, 1$,
\[
    r_{0,\beta} = \sum_{\alpha=1}^{\gamma} r_{0,\alpha} p_{\alpha,\beta}^{\langle 0, \gamma\rangle}, \quad \beta = 1, 2, \ldots, \gamma.
\]

Now, with $r_{0,1}=1$, perform the (scalar) back substitution to get the following recursive determination of $r_{0,2}$:
\[
    r_{0,2} = r_{0,1} \;  \frac{p_{1,2}^{\langle 0,2\rangle}}{p_{2,1}^{\langle 0,2\rangle}}.
\]
Continue the back substitution until all $r_{0,\beta}$ are determined by
\[
    r_{0,\beta} = \sum_{\alpha=1}^{\beta-1} r_{0,\alpha} \frac{p_{\alpha,\beta}^{\langle 0,\beta\rangle}}{\sum_{k=1}^{\beta-1} p_{\beta,k}^{\langle 0,\beta\rangle}}, \quad \beta = 2, 3, \ldots, r.
\]

Next, for $i= 1, 2, \ldots, N$, define $r_i=\frac{\pi_i}{\pi_{0,1}}$.  Since $r_{i}$ and $\pi_i$ are different only by a constant, $r_i$ also satisfies equations \eqref{eqn:se} and \eqref{eqn:se-j}. With $r_0$ computed above, the back block-form substitution is now performed to compute $r_j$, for $j=1, 2, \ldots, N$, recursively, as
%\begin{equation*}%\label{}
%  r_2=r_1P_{1,2}^2(I-P_{2,2}^2)^{-1};
%\end{equation*}
\begin{equation}
r_j=\sum\limits_{i=0}\limits^{j-1}r_iP_{i,j}^{\langle j\rangle}(I-P_{j,j}^{\langle j\rangle})^{-1}.
\end{equation}
%(\begin{equation}%\label{}
%   r_1P_{1,j}^{j}+r_2P_{2,j}^j+\cdots+r_jP_{j,j}^j=r_j.
% \end{equation})
Finally,
\begin{equation} \label{eqn:rj}
    \pi_j = \frac{r_j}{\sum_{i=0}^N  \sum_{\alpha=1}^{r} r_{i,\alpha}}, \quad j = 0, 1, \ldots, N.
\end{equation}
%Recall that $r1=(r_{11}, r_{12}, \dots, r_{1N})$ and $\pi_1=(\pi_{11}, \pi_{12}, \ldots, \pi_{1N})$ are different only by a constant, and $\pi_1$ is a probability vector, we can easily normalize $r$ to have
%\begin{equation} \label{eqn:rj}
%    \pi_j = \frac{r_j}{\sum_{k=1}^N r_k}, \quad j = 1, 2, \ldots, N.
%\end{equation}

\begin{remark} It is worthwhile to mention that the forward elimination starts from $\pi_N$, not from $\pi_0$. This is important for the control of computational errors, since for large $n$, tail probabilities are smaller. Also, in the scalar case, $I-P_{n,n}^{\langle n\rangle}$ is replaced by $\sum_{k=1}^{n-1} P_{n,k}^{\langle n\rangle}$ to avoid the subtraction. However, we cannot do so for the block case. Instead, various numerically stable algorithms can be used for computing $(I-P_{n,n}^{\langle n\rangle})^{-1}$. One option is to apply the (standard) GTH algorithm to the (scalar) equations within the block recursively to get the inverse.
\end{remark}

\begin{remark}
It is well-known that the standard GTH algorithm is mathematically equivalent to Gaussian elimination. As indicated earlier, Gaussian elimination can be numerically unstable when $N$ is large, while the GTH algorithm is very stable.
This property remains valid for the block-form GTH algorithm.
\end{remark}

\begin{remark}
The block-form GTH algorithm can also be discussed in an analogous way for a continuous time Markov chain.
\end{remark}

\section{Censored Markov chains} \label{sec:censoring}

In this section, we first detail how the forward block-form elimination in the block-form GTH algorithm is indeed the process of computing a sequence of censored Markov chains with censoring sets $L_{\leq n} := \cup_{i=0}^n L_i$ for $i = N, N-1, \ldots, 0$. When $i=N$, the censored process is simply the original process, or $P$ in \eqref{eqn:Pij-N}. Since the standard GTH algorithm is a special case of the block-form GTH algorithm with block size 1, the forward elimination in the standard GTH algorithm is simply the process of computing a sequence of censored processes with censoring sets $\{0, 1, \ldots, N \}$, $\{0, 1, \ldots, N-1 \}$, \ldots, $\{0\}$. Then, we explain how the GTH algorithm can be formally extended to an infinite-state Markov chain $P$ in \eqref{eqn:Pij}.

\begin{definition}[Censored process]
Let $\{X_n: n =0, 1, 2, \ldots \}$ be a countable-state Markov chain with state space
$S$ and transition probability matrix $P$.
Let $E$ be a non-empty subset of $S$. Suppose that $n_k$ is the $k$-th time at which the original process $X_n$ visits the subset $E$. Then, the censored stochastic process $\{X_k^E: k =0, 1, 2, \ldots \}$ is defined by $X_k^E=X_{n_k}$, i.e., the value of the new process $X_k^E$ at time $k$ is equal to the value of the original process $X_n$ at its {$k$-th} time of visiting the subset $E$.
\end{definition}

The censored process is also referred to as a watched process since it is obtained by watching $X_n$ only when it is in $E$. It is also referred to as an embedded Markov chain since the time (or the state space) of the censored process is embedded in the time (or the state space) of the original process.
The following lemma is a summary of some basic properties of the censored process.
\begin{lemma} \label{lem:censored}
\textbf{(i)} The censored process $X_k^E$ is also a Markov chain. If the transition probability matrix $P$ of the Markov chain $X_n$ is partitioned according to $E$ and its complement $E^c$:
\[
    P = \bordermatrix[{[]}]{%
    & E & E^c \cr
E & T & U \cr
E^c & D & Q
},
\]
then the transition probability matrix of the censored Markov chain $X_k^E$ is given by
\begin{equation} \label{eqn:censor-E}
    P^E = T + U \hat{Q} D,
\end{equation}
where $\hat{Q} = \sum_{n=0}^{\infty} Q^n$ is the minimal nonnegative inverse of $I-Q$.

\textbf{(ii)}
The Markov chain $P$ is irreducible, if and only if for every subset $E$ of the state space, the censored Markov chain $P^E$ is irreducible. If $P$ has a unique stationary probability vector $\pi=(\pi_1, \pi_2, \ldots)$, then the stationary probability vector $\pi^E = \big (\pi_j^E \big )_{j \in E}$ of the censored Markov chain is given by
\[
	\pi^E_j = \frac{\pi_j}{\sum_{k \in E} \pi_k},\quad j \in E.
\]

\textbf{(iii)} If $E_1$ and $E_2$ are two non-empty subsets of the state space $S$ and $E_2$ is a subset of $E_1$, then
\[
    P^{E_2} = (P^{E_1})^{E_2}.
\]
\end{lemma}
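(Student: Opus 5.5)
The plan is to prove each of the three parts directly from the definition of the censored process, using first-passage decompositions and the strong Markov property. Throughout, $\tau_E=\inf\{n\ge 1: X_n\in E\}$ denotes the first return time to $E$.

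\textbf{Part (i).} The times $n_k$ are successive hitting times of $E$, hence stopping times. By the strong Markov property, given $X^E_0,\dots,X^E_k$ the future of $X_{n_k+\cdot}$ depends only on $X_{n_k}=X^E_k$, so $X^E$ is a Markov chain. Its transition probability from $i\in E$ to $j\in E$ is $P_i(X_{\tau_E}=j)$. I would split according to the first step.
\begin{itemize}
\item A direct jump into $E$ contributes $T_{ij}$.
\item Otherwise the chain jumps to $E^c$ via $U$, spends $n\ge 0$ further steps inside $E^c$ (contributing $Q^n$), and then enters $E$ via $D$.
\end{itemize}
Summing over the excursion length $n$ by monotone convergence gives
\[
P^E=T+U\Big(\sum_{n\ge0}Q^n\Big)D=T+U\hat Q D.
\]
Since all terms are nonnegative, $\hat Q$ is well defined entrywise, possibly infinite a priori. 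Its entries are expected numbers of visits, $\hat Q_{kl}=E_k[\#\text{visits to } l \text{ before leaving } E^c]$. Minimality of $\hat Q$ among nonnegative solutions of $(I-Q)X=I$ follows by iterating: any nonnegative solution satisfies $X=\sum_{n<m}Q^n+Q^mX\ge\sum_{n<m}Q^n$. One subtlety needs a remark: if $P_i(\tau_E<\infty)<1$ then $P^E$ is only substochastic. Under irreducibility and recurrence it is stochastic, and this is the setting the paper uses.

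\textbf{Part (ii).} For irreducibility, paths of $P$ between states of $E$ project onto paths of $P^E$, obtained by deleting the excursions into $E^c$. So $i\to j$ in $P$ implies $i\to j$ in $P^E$, and the converse holds trivially, since a positive entry of $P^E$ comes from a positive-probability path of $P$. Taking $E=S$ gives the reverse implication of the equivalence.

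For the stationary vector, I would use the regenerative (cycle) representation. Fix $a\in E$. Then $\pi_j\propto E_a[\text{visits to } j \text{ in } [0,\tau_a)]$, where $\tau_a$ is the return time to $a$. For $j\in E$, the visits to $j$ before returning to $a$ are exactly the visits of $X^E$ to $j$ before it returns to $a$. Applying the same representation to $P^E$, which is positive recurrent because its return time to $a$ is at most that of $X$, gives $\pi^E_j\propto\pi_j$ on $E$. Normalizing yields the formula. Alternatively, one can verify algebraically that $\pi_E(T+U\hat QD)=\pi_E$ using $\pi_{E^c}=\pi_EU+\pi_{E^c}Q$, hence $\pi_{E^c}\ge\pi_EU\hat Q$. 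Equality here needs a minimality or uniqueness argument, so I prefer the probabilistic route.

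\textbf{Part (iii).} This is pathwise. The successive visit times of $X^{E_1}$ to $E_2$ correspond exactly to the successive visit times of $X$ to $E_2$, because $E_2\subseteq E_1$. Hence $(X^{E_1})^{E_2}=X^{E_2}$ as processes, and their transition matrices agree.

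The main obstacle is part (ii) in the countable case, where the algebraic identity alone does not pin down $\pi_{E^c}=\pi_EU\hat Q$. The regenerative argument handles this cleanly, so I would commit to it.
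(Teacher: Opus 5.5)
Your proof is correct. The paper gives no proof of this lemma: it presents it as a summary of standard properties of censored chains, citing L\'evy, Kemeny--Snell--Knapp, Zhao--Liu and related works, so there is no argument of the paper's to compare with. Your first-passage decomposition for (i), path projection plus the regenerative cycle formula for (ii), and pathwise identification $(X^{E_1})^{E_2}=X^{E_2}$ for (iii) are the standard route, and they establish all three parts.

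Three small points would tighten it.

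\textbf{Finiteness of $\hat Q$.} Calling $\hat Q$ an \emph{inverse} of $I-Q$ requires its entries to be finite, which you leave open. Under irreducibility this holds. For $l\in E^c$, take a positive-probability path from $l$ to $E$ and cut it after its last visit to $l$. This shows the chain enters $E$ before returning to $l$ with positive probability. Hence $\hat Q_{ll}<\infty$, and $\hat Q_{kl}\le \hat Q_{ll}$ for every $k$.

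\textbf{Irreducibility in (ii).} The stationary-vector formula in (ii) tacitly needs irreducibility; otherwise $\sum_{k\in E}\pi_k$ can vanish. Irreducibility, together with a stationary probability vector, gives the positive recurrence your cycle formula uses. You use this but should state it explicitly there.

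\textbf{The algebraic route.} The route you set aside is not really an obstacle. Iterating $\pi_{E^c}=\pi_E U+\pi_{E^c}Q$ gives $\pi_{E^c}=\pi_E U\sum_{n<m}Q^n+\pi_{E^c}Q^m$. Moreover $\pi_{E^c}Q^m\mathbf 1=P_\pi(X_0,\dots,X_m\in E^c)\downarrow 0$ by recurrence. Therefore $\pi_{E^c}=\pi_E U\hat Q$, and $\pi_E P^E=\pi_E T+\pi_{E^c}D=\pi_E$. Uniqueness of the stationary vector for the irreducible positive recurrent $P^E$ then finishes. Your regenerative argument is equally valid.
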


For connecting the block-form GTH algorithm with censored Markov chains, we adapt the notation for censored processes to the block-partitioned transition matrix in \eqref{eqn:Pij-N} by considering the censoring set $L_{\leq n}$. First, consider the censored process with censoring set $L_{\leq N-1}$. Partition $P$ in \eqref{eqn:Pij-N} according to $L_{\leq N-1}$ and $L_{\leq N-1}^c=L_N$:
\[
    P = \bordermatrix[{[]}]{%
    & L_{\leq N-1} & L_N \cr
L_{\leq N-1} & T_{N-1} & U_{N-1} \cr
L_N & D_{N-1} & P_{N,N}
}.
\]
Then, the censored matrix with censoring set $L_{\leq N-1}$ is, according to \eqref{eqn:censor-E}, given by
\[
    P^{(N-1)} := P^{L_{\leq N-1}} = T_{N-1} + U_{N-1} (I - P_{N,N})^{-1} D_{N-1}.
\]
Let the entries of $P^{(N-1)}$ be $P^{(N-1)}_{i,j}$, so that $P^{(N-1)}=(P^{(N-1)}_{i,j})_{i,j = 0, 1, \ldots, N-1}$. Write this out in detail, we have
\[
    P^{(N-1)}_{i,j} = P_{i,j} + P_{i,N} (I-P_{N,N})^{-1} P_{N,j}, \quad i, j = 0, 1, \ldots, N-1.
\]
Compare this with the first forward block-elimination in the block-form GTH algorithm, given in \eqref{eqn:forward-(N-1)}, and notice that $P_{i,j}^{\langle N\rangle} = P_{i,j}$, we conclude that
\[
    P^{(N-1)}_{i,j} = P_{i,j}^{\langle N-1 \rangle}, \quad i,j =0, 1, \ldots, N-1,
\]
or the censored matrix $P^{(N-1)}$ is the same as $P^{\langle N-1 \rangle}$, which is the matrix after the first step forward block-form elimination given in \eqref{eqn:N-1}. If, based on $P^{(N-1)}$, we continue the censoring process, and use the property (Lemma~\ref{lem:censored}-(iii)) that
\begin{equation}\label{eqn:iterations-new}
      P^{(n)} := P^{L_{\leq n}} = ((P^{(N-1)})^{(N-2)} \cdots)^{(n)}, \quad n =N-1, N-2, \ldots, 0,
\end{equation}
we can conclude that the forward block-form elimination is the same as computing the sequence of censored matrices for level $n=N-1, N-2, \ldots, 0$:
\[
    P^{(n)} = P^{\langle n \rangle}.
\]

The GTH algorithm applies to finite-state Markov chains. In applications, we often need to compute the stationary distribution of an infinite-state Markov chain, like $P$ in \eqref{eqn:Pij}. The block-form GTH algorithm can be formally extended to the block-partitioned transition matrix $P$ in \eqref{eqn:Pij} in terms of censoring (for example, see \cite{Grassmann-Heyman:1990}). In more detail, partition $P$ in \eqref{eqn:Pij} as
\begin{equation} \label{eqn:Pij-partition}
    P = \bordermatrix[{[]}]{%
    & L_{\leq N} & L_{\leq N}^c \cr
L_{\leq N} & T_{N} & U_{N} \cr
L_{\leq N}^c & D_{N} & Q_{N}
}.
\end{equation}
The formal extension of the block-form GTH algorithm is the following censored Markov chain
\begin{equation}\label{eqn:censored-from-infinite}
      P^{(N)} = T_N + U_N \hat{Q}_N D_N.
\end{equation}

Since the forward elimination step is equivalent to constructing a sequence of censored Markov chains, we may write equation \eqref{eqn:iterations-old} (replacing $n-1$ by $N$) as
\[
    P_{i,j}^{(N)} = P_{i,j}^{(N+1)} + P_{i,N+1}^{(N+1)} (I-P_{N+1,N+1}^{(N+1)})^{-1} P_{N+1,j}^{(N+1)}.
\]
Repeat the iteration process on $P_{i,j}^{(N+1)}$, $P_{i,j}^{(N+2)}$, \ldots to write the block entries $P^{(N)}_{i,j}$ of $P^{(N)}$ as
\begin{equation}\label{eqn:censored-new}
   P^{(N)}_{i,j} = P_{i,j} + \sum_{k=N+1}^{\infty} P^{(k)}_{i,k} (I - P^{(k)}_{k,k})^{-1} P^{(k)}_{k,j}.
\end{equation}

The censored Markov chain given in \eqref{eqn:iterations-new} is called a formal extension of the GTH algorithm, since we cannot perform an initial elimination step. In fact, in general, there is no explicit formula or easy way for computing the above censored matrix (from infinite to finite states).
In practice,
if $P$ is infinite-state, to implement the GTH algorithm various augmentation methods have been proposed to replace the censored chain. This belongs to a separate topic: approximations to the stationary distribution of an infinite-state Markov chain by a finite-state Markov chain, or by a finite-state augmented Markov chain. For the Markov chain in \eqref{eqn:Pij} partitioned as in \eqref{eqn:Pij-partition}, an augmentation is a finite-level Markov chain with its transition matrix
\begin{equation} \label{eqn:Pij-augmented}
    {}_{(N)}\widetilde P = T_N + \widetilde{A},
\end{equation}
where $\widetilde{A}$ is a nonnegative matrix such that ${}_{(N)}\widetilde P$ is stochastic. Suppose that ${}_{(N)}\widetilde P$ has a unique stationary distribution ${}_{(N)}\widetilde{\pi}$. Various easily implementable augmentations exist, including first-column, last-column, and linear augmentations.
When we use an augmentation as step~0 elimination in order to perform the GTH algorithm,
it is  important to ask the following questions:
\begin{description}
    \item[(1)] When does the distribution of ${}_{(N)}\widetilde{\pi}$ have a limit as $N \to \infty$?

        In general, we need to impose conditions for the existence of a limit.

    \item[(2)] If the limit exists, does this limit equal to the stationary distribution of the original Markov chain $P$ (assuming that $P$ is positive recurrent), or can the probability distribution of the augmented chain be an approximation to the stationary distribution of the original chain $P$?

        In general, it cannot be guaranteed that the limit coincides with the stationary distribution of the original chain.

    \item[(3)] When the stationary distribution of the augmentation is an approximation to the stationary distribution of the original chain (or when the answers to (1) and (2) are confirmed), which augmentation is the best in terms of the approximation error for a fixed truncation size $N$?

    The stationary vector of the censored Markov chain in \eqref{eqn:Pij-partition} is given in Lemma~\ref{lem:censored}-(ii), which always has a limit equal to the stationary vector of the original process $P$, and the censored Markov chain is a best augmentation in $\ell_1$ norm (see below for details).
\end{description}

For the scalar case, the censored process gives the minimal error in $\ell_1$ norm compared to all other augmentations, proved in \cite{Zha:96}. This property remains true for the block-partitioned $P$ given in \eqref{eqn:Pij}. To state this property, define the $\ell_1$-norm of the total error between the stationary distribution ${}_{(N)}\widetilde{\pi}$ of an augmented Markov chain ${}_{(N)}\widetilde{P}$ given in \eqref{eqn:Pij-augmented} and the stationary distribution $\pi$ of the original Markov chain $P$ given in \eqref{eqn:Pij}:
\begin{align*}
  \ell_1(N,\infty) &= \sum_{n=0}^{N} || {}_{(N)}\widetilde{\pi} - \pi_n||_1 + \sum_{n=N+1}^{\infty} ||\pi_n||_1 \\
   &=  \sum_{n=1}^{N} \sum_{\alpha=1}^{r}  |{}_{(N)}\widetilde{\pi} - \pi_{n,\alpha}| + \sum_{n=N+1}^{\infty} \sum_{\alpha=1}^{r} \pi_{n,\alpha}.
\end{align*}

\begin{theorem}[Best augmentation, \cite{Zha:96}]
For a fixed level $N$, the censored Markov chain $P^{(N)}$ is an augmentation in block-form such that the error sum $\ell_1(N, \infty)$ is minimized.
\end{theorem}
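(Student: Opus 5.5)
We will show that every augmentation has error at least $2\sum_{n>N}\|\pi_n\|_1$, and that the censored chain attains this bound. The argument is the block version of the one in \cite{Zha:96}; the block structure plays no essential role because the $\ell_1$ norm sums over all states $(n,\alpha)$ anyway.

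\textbf{Lower bound.} Write $\pi_{\le N}=(\pi_0,\ldots,\pi_N)$ and $\sigma_N=\sum_{n>N}\|\pi_n\|_1=1-\pi_{\le N}\mathbf{1}$. Let ${}_{(N)}\widetilde\pi$ be the stationary distribution of an arbitrary augmentation ${}_{(N)}\widetilde P=T_N+\widetilde A$. Since ${}_{(N)}\widetilde\pi$ is a probability vector on $L_{\le N}$, the triangle inequality gives
\[
\sum_{n=0}^N\|{}_{(N)}\widetilde\pi_n-\pi_n\|_1\ \ge\ \Big|{}_{(N)}\widetilde\pi\mathbf 1-\pi_{\le N}\mathbf 1\Big| = 1-(1-\sigma_N)=\sigma_N .
\]
Hence $\ell_1(N,\infty)\ge 2\sigma_N$ for every augmentation. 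This step does not even use stationarity.

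\textbf{The censored chain attains the bound.} First, $P^{(N)}=T_N+U_N\hat Q_N D_N$ is an augmentation: $U_N\hat Q_N D_N\ge 0$, and $P^{(N)}$ is stochastic. We will justify stochasticity from Lemma~\ref{lem:censored}(i) under recurrence. Alternatively, positive recurrence of $P$ means the chain started in $L_{\le N}$ returns to it a.s., which gives stochasticity directly.

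By Lemma~\ref{lem:censored}(ii), the stationary vector of $P^{(N)}$ is $\pi_{\le N}/(1-\sigma_N)$, so
\[
\pi^{(N)}-\pi_{\le N}=\frac{\sigma_N}{1-\sigma_N}\,\pi_{\le N}\ge 0 .
\]
Its $\ell_1$ norm is $\sigma_N$. Therefore $\ell_1(N,\infty)=2\sigma_N$, which meets the lower bound, and $P^{(N)}$ is a minimizer.

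\textbf{Main obstacle.} The delicate point is the standing assumptions rather than the inequality. We must have $P$ irreducible and positive recurrent with a unique $\pi$, so that $P^{(N)}$ is stochastic and Lemma~\ref{lem:censored}(ii) applies. We must also restrict the class of competing augmentations to those with a unique stationary distribution, so that ${}_{(N)}\widetilde\pi$ is well defined.

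The equality case also shows what ``best'' means here: any augmentation whose stationary vector dominates $\pi_{\le N}$ componentwise attains the same error. So the statement should be read as saying the censored chain is a minimizer, not necessarily the unique one.
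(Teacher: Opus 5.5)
Your proof is correct. It is essentially the argument of Zhao and Liu~\cite{Zha:96}, which the paper cites rather than reproving: the triangle inequality gives $\ell_1(N,\infty)\ge 2\sigma_N$ for any probability vector on $L_{\le N}$, and the censored chain attains this bound because, by Lemma~\ref{lem:censored}(ii), its stationary vector $\pi_{\le N}/(1-\sigma_N)$ dominates $\pi_{\le N}$ componentwise. Your other remarks are also accurate: irreducibility and recurrence of $P$ are needed for $P^{(N)}$ to be stochastic, and the minimizer need not be unique, which matches the paper's remark that the natural-LBCA attains the same minimal error in the block-monotone case.
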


It is clear that the censored Markov chain is an augmented Markov chain.
In Sections~\ref{sec:RA-CM}, we propose an approximation to the censored matrix for Markov chains of $M/G/1$ type, referred to as the renormalized approximated censored matrix (RA-CM), such that the stationary distribution is asymptotically best (see Section~\ref{sec:RA-CM} for details).

The following are some historical notes on censored Markov chains and augmentations.
The concept of the censored Markov chain was first introduced and studied by L\'{e}vy~\cite{Lev:1951,Lev:1952,Lev:1958}. It was then used by Kemeny, Snell and Knapp~\cite{Kem66}
for proving the uniqueness of the invariant vector for a recurrent countable-state Markov chain. This embedded Markov chain was an approximation tool in the book by Freedman~\cite{Fre:1983} for countable-state Markov chains. Related to the current paper, readers may refer to the following references: Zhao and Liu~\cite{Zha:96}, Zhao \textit{et al.}~\cite{ZLB:1998,ZLB:2003}, Zhao~\cite{Zhao:2000}.
Readers may also refer to  the book by Latouche and Ramaswami~\cite{LR:1987} for censored Markov chains in continuous time.

Studies of the convergence of probability distributions of augmented Markov chains, or error bounds between these two distributions,  date back to the 1960s. Early studies include
Seneta~\cite{Seneta:1967,Seneta:1968,Seneta:1980} and his book \cite{Seneta:1981},
Tweedie~\cite{Tweedie:1971,Tweedie:1998},
Golub and Seneta~\cite{Golub-Seneta:1973,Golub-Seneta:1974},
Wolf~\cite{Wolf:1975,Wolf:1980},
Allen \textit{et al.}~\cite{AAS:1977},
Keilson and Ramaswami~\cite{Keilson-Ram:1984},
Gibson and Seneta~\cite{Gib:1987a,Gib:1987b}; later studies include
Grassmann and Heyman~\cite{Grassmann-Heyman:1993},
Zhao and Liu~\cite{Zha:96},
Li and Zhao~\cite{Li-Zhao:2000},
Cho and Meyer~\cite{ChoMeyer01}; and recent ones include
Liu~\cite{Liu:2010},
Masuyama~\cite{Masuyama:2015,Masuyama:2016,Masuyama:2017a,Masuyama:2017b,Masuyama:2019},
Liu and Li~\cite{ll18},
Liu \textit{et al.}~\cite{LLZ:2021},
Infanger \textit{et al.}~\cite{IGL:2022}.

\section{$RG$-factorization} \label{sec:RG-F}

%\blue{(Update this section to block-form. Also, add details on how Gaussian elimination is equivalent to LU-factorization, then add details on how GTH is equivalent to $RG$-factorization.)}

The standard GTH algorithm computes the finite unknowns of a linear system. In terms of censoring, the GTH algorithm can be formally extended to infinite-state Markov chains. The new contribution in this section is the equivalence between the block-form GTH algorithm (standard and extended) and the $RG$-factorization of $I-P$.

It is well-known that mathematically, the (standard) GTH algorithm is equivalent to Gaussian elimination, which is in turn equivalent to an $LU$-decomposition. Since the forward elimination in the GTH algorithm starts from the last unknown, the decomposition is in the form of $UL$ rather than $LU$. In the following, we first recall two important measures in block-form, dual to each other and referred to as $R$- and $G$-measures in block-form, which are two probability quantities defined for the Markov chain $X_n$ with transition probability matrix $P$ given in \eqref{eqn:Pij}.

\begin{remark}[A reminder on linear algebraic factorizations.]
For a finite matrix $A$, Gaussian elimination without pivoting produces a factorization $A=LU$, where $L$ is unit lower triangular and $U$ is upper triangular.
If the elimination is performed in reverse order (starting from the last variable), then the same algebraic operations yield a factorization of the form $A=UL$, with $U$ unit upper triangular and $L$ lower triangular.
This is the form that naturally matches the forward elimination order in the GTH algorithm.
\end{remark}

For \(0 \leq i < j\), \(R_{i,j}\) is defined as a matrix of size \(r \times r\) whose \((\alpha, \beta)\)th entry is the expected number of visits to state \((j, \beta)\) before hitting any state in \(L_{\leq (j-1)}\), given that the process starts in state \((i, \alpha)\), or
\[
    R_{i,j}(\alpha, \beta) = E[\text{number of visits to } (j, \beta) \text{ before hitting } L_{\leq (j-1)} | X_0 = (i, \alpha)].
\]
The family of matrices $\{ R_{i,j} \}_{j > i \geq 0}$ is referred to as the $R$-measure.

For \(i > j \geq 0\), \(G_{i,j}\) is defined as a matrix of size \(r \times r\) whose \((\alpha, \beta)\)th entry is the probability of hitting state \((j, \beta)\) when the process enters \(L_{\leq (i-1)}\) for the first time, given that the process starts in state \((i, \alpha)\), or
\[
    G_{i,j}(\alpha, \beta) = P[\text{hitting } (j, \beta) \text{ upon entering } L_{\leq (i-1)} \text{ for first time } | X_0 = (i, \alpha)].
\]
The family of matrices $\{ G_{i,j} \}_{0 \leq j < i}$ is referred to as the $G$-measure.

One of the most important properties for the $R$- and $G$-measures is the invariant property under censoring stated in the next theorem. This property is the key for connecting the $UL$-decomposition constructed from the block-form GTH algorithm and the $RG$-factorization.

Readers, who are interested in knowing more about the $R$- and $G$-measures, including the $RG$-factorization, can refer to the following papers and related references wherein:
Grassmann and Heyman~\cite{Grassmann-Heyman:1990},
Heyman~\cite{Heyman:1995},
Zhao, Li and Braun~\cite{ZLB:1997, ZLB:2003},
Zhao~\cite{Zhao:2000}, and
Li and Zhao~\cite{Li-Zhao:2002,Li-Zhao:2002b,Li-Zhao:2004}.

\begin{theorem}[Invariance of $RG$-measures, Theorem~4 in \cite{ZLB:2003}] \label{theorem1}
  Let $\{ R_{i,j} \}_{j > i \geq 0}$ and $\{ G_{i,j} \}_{0 \leq j < i}$ be the $R$- and $G$-measures defined for the Markov chain $P$ given in \eqref{eqn:Pij}, and let $\{ R_{i,j}^{(n)} \}_{n \geq j > i \geq 0}$ and $\{ G_{i,j}^{(n)} \}_{0 \leq j < i \leq n}$ be the $R$- and $G$-measures defined for the censored Markov chain $P^{(n)}$. Then, for given $j > i \geq 0$,
\begin{equation} \label{Eqn:13}
    R^{(n)}_{i,j} = R_{i,j}, \quad \text{for all $n \geq j$},
\end{equation}
and for given $i > j \geq 0$,
\begin{equation}
    G^{(n)}_{i,j} = G_{i,j}, \quad \text{for all $n \geq i$.}
\end{equation}
\end{theorem}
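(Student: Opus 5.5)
We argue pathwise, by coupling the censored chain with the original chain. Fix $n$ and let $X^{(n)}_k = X_{n_k}$ be the censored process on $L_{\leq n}$, where $n_k$ is the $k$-th visit of $X$ to $L_{\leq n}$. By Lemma~\ref{lem:censored}-(i), $X^{(n)}$ is a Markov chain with transition matrix $P^{(n)}$. So we may compute $R^{(n)}_{i,j}$ and $G^{(n)}_{i,j}$ directly on the trajectories of the embedded process $X_{n_k}$, rather than on an independent copy driven by $P^{(n)}$. The two identities then become statements about the same sample paths, counted in two time scales.

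For the $R$-measure, take $n \geq j > i$ and start at $X_0 = (i,\alpha) \in L_{\leq n}$, so that $n_0 = 0$. Write $\tau$ for the first hitting time of $L_{\leq (j-1)}$ by $X$, and $\tau^{(n)}$ for the corresponding time for $X^{(n)}$.
\begin{itemize}
\item Since $L_{\leq (j-1)} \subseteq L_{\leq n}$, the first visit of $X$ to $L_{\leq (j-1)}$ is itself a visit to $L_{\leq n}$. Hence $n_{\tau^{(n)}} = \tau$.
\item Every visit of $X$ to $(j,\beta)$ is also a visit to $L_{\leq n}$, because $j \leq n$. So each such visit occurs at some time $n_k$, and it lies before $\tau$ exactly when $k < \tau^{(n)}$.
\end{itemize}
It follows that the number of visits to $(j,\beta)$ before $\tau$ equals the number of visits of $X^{(n)}$ to $(j,\beta)$ before $\tau^{(n)}$, pathwise. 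Taking expectations gives $R^{(n)}_{i,j}(\alpha,\beta) = R_{i,j}(\alpha,\beta)$. This also covers the case where the counts are infinite, since the equality holds sample path by sample path.

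For the $G$-measure, take $n \geq i > j$ and start at $(i,\alpha)$. By the same observation, the first entrance of $X$ into $L_{\leq (i-1)} \subseteq L_{\leq n}$ occurs at a censoring time $n_k$. That time is exactly the first entrance of $X^{(n)}$ into $L_{\leq (i-1)}$, and the state entered is the same. Moreover, the event that $X$ never enters $L_{\leq (i-1)}$ coincides with the event that $X^{(n)}$ never does. Taking probabilities gives $G^{(n)}_{i,j} = G_{i,j}$.

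The main point needing care is the first step: that the law of the path $(X_{n_k})_k$ under $P$ equals the law of a chain driven by $P^{(n)}$. This includes the case where the original chain is transient, so that $n_k$ may be infinite for large $k$ and $P^{(n)}$ may be substochastic. One then has to check that "killing" on the event $n_k = \infty$ matches the substochastic defect in $T + U\hat{Q}D$.

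I would verify this with the strong Markov property at each $n_k$. Between consecutive visits the chain makes one step into $L_{\leq n}$ or an excursion in $L_{\leq n}^c$. Summing over the excursion lengths gives exactly the transition kernel $T + U\hat{Q}D$ of \eqref{eqn:censor-E}, with the missing mass corresponding to excursions that never return.

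Once this identification is in place, both invariance identities are immediate counting arguments. If desired, the case of finite $P$ with censoring sets $L_{\leq n}$ then follows by the same argument, or by Lemma~\ref{lem:censored}-(iii).
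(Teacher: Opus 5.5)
Your proof is correct. The paper gives no argument of its own for this statement; it is quoted from \cite{ZLB:2003}, so there is no in-paper proof to match. Your identification of the censored chain with the embedded process $(X_{n_k})_{k\ge 0}$ started at $(i,\alpha)\in L_{\le n}$, followed by the two pathwise counting identities, is the natural direct proof. Treating the event $\{n_k=\infty\}$ as killing correctly matches the defect of $T+U\hat Q D$, so you need no recurrence assumption.

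One precision: $X_0=(i,\alpha)$ already lies in $L_{\le (j-1)}$. So in the $R$-part, $\tau$ and $\tau^{(n)}$ must be first \emph{return} times, i.e.\ infima over $t\ge 1$ and $k\ge 1$ respectively. With $n_0=0<n_1<n_2<\cdots$, your correspondence $t=n_k$ between the two index sets then goes through verbatim.

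It is also worth comparing your route with how the paper later uses the theorem. In Lemma~\ref{lem:blockGTH-to-RG} and Theorem~\ref{thm:RG-F} the paper works with $R_{i,j}=P^{(j)}_{i,j}(I-P^{(j)}_{j,j})^{-1}$ and $G_{i,j}=(I-P^{(i)}_{i,i})^{-1}P^{(i)}_{i,j}$. These come from a first-step computation in $P^{(j)}$ (resp.\ $P^{(i)}$), where the target level is the top level, combined with the present identity at $n=j$ (resp.\ $n=i$).

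One could try to build a proof around these formulas together with Lemma~\ref{lem:censored}-(iii), $(P^{(n)})^{(j)}=P^{(j)}$, which reduces general $n$ to the single case $n=j$. However, that reduction still needs your pathwise step for $n=j$. Otherwise it is circular, because the paper derives those formulas from this very theorem. Your argument handles every $n\ge j$ (resp.\ $n\ge i$) at once, so it is the more economical route.
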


The $RG$-factorization of $I-P$ for Markov chains of $GI/G/1$ type (or with shifted repeating block rows except the boundary row) was given in \cite{Zhao:2000}, which can be extended to a general block-partitioned Markov chain $P$ defined in \eqref{eqn:Pij}.

\begin{theorem} [$RG$-factorization for general $P$] \label{thm:RG-F}
The $RG$-factorization can be extended to the general block-partitioned Markov chain $P$ given in \eqref{eqn:Pij}, as follows:
\begin{equation} \label{eqn:RG-F}
    I - P = (I - R_U)(I - \Phi_D)(I - G_L),
\end{equation}
where
\[ R_U = \begin{pmatrix}
0 & R_{0,1} & R_{0,2} & R_{0,3} & \cdots \\
  & 0 & R_{1,2} & R_{1,3} & \cdots \\
  &   & 0 & R_{2,3} & \cdots \\
  &   &   & 0 & \cdots \\
  & & & & \ddots
\end{pmatrix}, \]
\[
    \Phi_D = \mathrm{diag}(\Phi_0, \Phi_1, \Phi_2, \ldots) \quad \text{with $\Phi_k = P^{(k)}_{k,k}$,}
\]
and
\[ G_L = \begin{pmatrix}
0 & & & & \\
G_{1,0} & 0 & & & \\
G_{2,0} & G_{2,1} & 0 & & \\
G_{3,0} & G_{3,1} & G_{3,2} & 0 & \\
\vdots & \vdots & \vdots & \vdots & \ddots
\end{pmatrix}.
\]
By convention, all empty entries are zero entries.
\end{theorem}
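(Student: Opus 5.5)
Since both sides of \eqref{eqn:RG-F} are infinite block matrices, I will prove the identity block by block. The plan is to first establish it for every finite censored chain $P^{(n)}$ and then pass to the limit $n\to\infty$ using the invariance in Theorem~\ref{theorem1}.

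\emph{Finite case.} Fix $n$ and consider the censored matrix $P^{(n)}$ on $L_{\le n}$. By Section~\ref{sec:censoring}, the forward block-elimination of the GTH algorithm applied to $P^{(n)}$ produces exactly the sequence $P^{(k)}$ for $k=n,n-1,\ldots,0$, via the recursion \eqref{eqn:iterations-old}. Reading this elimination as a block $UL$-decomposition of $I-P^{(n)}$, one elimination step at level $k$ gives
\[
\begin{pmatrix} I-T & -U\\ -D & I-\Phi_k\end{pmatrix}
=\begin{pmatrix} I & -U(I-\Phi_k)^{-1}\\ 0 & I\end{pmatrix}
\begin{pmatrix} I-P^{(k-1)} & 0\\ 0 & I-\Phi_k\end{pmatrix}
\begin{pmatrix} I & 0\\ -(I-\Phi_k)^{-1}D & I\end{pmatrix},
\]
where $T,U,D$ are the blocks of $P^{(k)}$ with respect to $L_{\le k-1}$ and $L_k$. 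Iterating from $k=n$ down to $k=1$ gives $I-P^{(n)}=(I-R^{(n)}_U)(I-\Phi_D^{(n)})(I-G^{(n)}_L)$. The blocks are $R^{(n)}_{i,k}=P^{(k)}_{i,k}(I-P^{(k)}_{k,k})^{-1}$, $G^{(n)}_{k,j}=(I-P^{(k)}_{k,k})^{-1}P^{(k)}_{k,j}$ and $\Phi_k=P^{(k)}_{k,k}$.

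\emph{Identifying the factors as $R$- and $G$-measures.} This is a probabilistic check on the censored chain $P^{(k)}$, in which $L_k$ is the top level. Starting from $(i,\alpha)$, the chain $P^{(k)}$ makes a one-step jump into $L_k$ with block $P^{(k)}_{i,k}$. It then stays in $L_k$ until it first enters $L_{\le k-1}$, and the expected number of visits there is given by $(I-P^{(k)}_{k,k})^{-1}$. Hence $P^{(k)}_{i,k}(I-P^{(k)}_{k,k})^{-1}$ is the $R$-measure $R^{(k)}_{i,k}$ of $P^{(k)}$. In the same way, $(I-P^{(k)}_{k,k})^{-1}P^{(k)}_{k,j}$ is the first-passage distribution from $L_k$ into $L_{\le k-1}$, which is $G^{(k)}_{k,j}$. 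Theorem~\ref{theorem1} then gives $R^{(k)}_{i,k}=R_{i,k}$ and $G^{(k)}_{k,j}=G_{k,j}$. Invertibility of $I-P^{(k)}_{k,k}$ follows from irreducibility together with the fact that $P^{(k)}$ has a row leaving $L_k$, so $P^{(k)}_{k,k}$ is strictly substochastic.

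\emph{Passing to the infinite matrix.} Comparing the $(i,j)$ block of both sides, the right-hand side of \eqref{eqn:RG-F} is an infinite sum
\[
\sum_{k\ge \max(i,j)} R_{i,k}(I-\Phi_k)G_{k,j},
\]
with the conventions $R_{k,k}=-I$ and $G_{k,k}=-I$ and suitable signs. The finite identity for $P^{(n)}$ says that the truncation of this sum at $k\le n$ equals $\delta_{ij}I-P^{(n)}_{i,j}$. By \eqref{eqn:censored-new}, $P^{(n)}_{i,j}\to P_{i,j}$ as $n\to\infty$, because the tail terms are nonnegative and form the remainder of a convergent series. Alternatively, $P^{(n)}_{i,j}-P_{i,j}$ is the probability of returning to $(j,\cdot)$ after an excursion above level $n$, and this tends to $0$.

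The main obstacle is making the limit step rigorous. Two things are needed: the infinite sum must converge, and its value must equal the limit of the truncated sums. The plan is to split the sum into its single-signed parts. The diagonal contribution $R_{i,k}\Phi_k G_{k,j}$ and the term $R_{i,k}G_{k,j}$ are nonnegative, so monotone convergence applies to each part separately. Their difference equals $\sum_{k>n}P^{(k)}_{i,k}(I-P^{(k)}_{k,k})^{-1}P^{(k)}_{k,j}$, which tends to $0$ by \eqref{eqn:censored-new}.
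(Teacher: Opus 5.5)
Your finite-level argument is correct and is essentially the paper's proof in matrix form. The paper iterates the one-level censoring recursion $P^{(n)}_{i,j}=P^{(n+1)}_{i,j}+R^{(n+1)}_{i,n+1}(I-P^{(n+1)}_{n+1,n+1})G^{(n+1)}_{n+1,j}$ blockwise, applies Theorem~\ref{theorem1}, and lets the number of iterations tend to infinity. You package the same recursion as a block $UL$-factorization of $I-P^{(n)}$ and then let $n\to\infty$.

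The step that does not work as written is the limit step you flag as the main obstacle. Splitting $\sum_k R_{i,k}(I-\Phi_k)G_{k,j}$ into $\sum_k R_{i,k}G_{k,j}$ and $\sum_k R_{i,k}\Phi_kG_{k,j}$, and applying monotone convergence to each, presupposes that both series are finite. That holds under positive recurrence, but the theorem is stated for general $P$, and it can fail. Take $r=1$, $P_{0,0}=0$, $P_{0,k}=P_{k,0}=a_k>0$ and $P_{k,k}=1-a_k$ for $k\ge1$, with $\sum_k a_k=1$; this chain is irreducible and null recurrent. Then $R_{0,k}=G_{k,0}=1$ and $\Phi_k=1-a_k$. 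Both of your series diverge, so ``their difference'' is $\infty-\infty$. Yet $\Phi_0=1$ and $\sum_{k\ge1}R_{0,k}(1-\Phi_k)G_{k,0}=\sum_k a_k=1$, so the $(0,0)$ block of \eqref{eqn:RG-F} does hold.

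The splitting is also unnecessary. For $k>\max(i,j)$, each term is itself nonnegative:
\[
R_{i,k}(I-\Phi_k)G_{k,j}=P^{(k)}_{i,k}(I-P^{(k)}_{k,k})^{-1}P^{(k)}_{k,j}\ge 0,
\]
an expression you already wrote down. Your finite identity gives the partial sums directly: the sum over $\max(i,j)<k\le n$ equals $P^{(\max(i,j))}_{i,j}-P^{(n)}_{i,j}$. So convergence of the series and the value of its sum both reduce to the single fact $P^{(n)}_{i,j}\to P_{i,j}$.

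For that fact, do not cite \eqref{eqn:censored-new}. The paper obtains that formula by exactly this passage to the limit, so citing it is circular. Use your excursion argument instead, which takes one line. Since $\hat Q_nD_n$ is substochastic, $0\le[P^{(n)}_{i,j}-P_{i,j}]_{\alpha\beta}\le\sum_{l>n}\sum_{\gamma=1}^r[P_{i,l}]_{\alpha\gamma}$, and the right-hand side tends to $0$ because each row of $P$ is summable. The paper's own proof uses this limit silently when it lets $K\to\infty$, so the same remark repairs it.
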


\begin{proof}
The extended $RG$-factorization can be proved similarly to the derivation in \cite{Zhao:2000} (see also \cite{ZLB:2003}). Specifically,
\begin{eqnarray*}
    P^{(n)}_{n-i,n} &=& P^{(n+1)}_{n-i,n} + P^{(n+1)}_{n-i,n+1} (I - P^{(n+1)}_{n+1,n+1} )^{-1} P^{(n+1)}_{n+1,n} \\
     &=& P^{(n+1)}_{n-i,n} + P^{(n+1)}_{n-i,n+1} (I - P^{(n+1)}_{n+1,n+1} )^{-1} (I - P^{(n+1)}_{n+1,n+1} ) (I - P^{(n+1)}_{n+1,n+1} )^{-1}P^{(n+1)}_{n+1,n} \\
     &=& P^{(n+1)}_{n-i,n} + R^{(n+1)}_{n-i,n+1} (I - P^{(n+1)}_{n+1,n+1} ) G^{(n+1)}_{n+1,n} \\
    &=& P^{(n+2)}_{n-i,n} + R^{(n+2)}_{n-i,n+2} (I - P^{(n+2)}_{n+2,n+2} ) G^{(n+2)}_{n+2,n}  + R^{(n+1)}_{n-i,n+1} (I - P^{(n+1)}_{n+1,n+1} ) G^{(n+1)}_{n+1,n} \\
    &=& \cdots \cdots \\
    &=& P^{(n+K)}_{n-i,n} + \sum_{k=1}^K R^{(n+k)}_{n-i,n+k} (I - P^{(n+k)}_{n+k,n+k} ) G^{(n+k)}_{n+k,n}.
\end{eqnarray*}
Using the invariance under censoring for the $R$- and $G$-measures and taking the limit of $K \to \infty$, we have
\[
    P^{(n)}_{n-i,n} = P_{n-i,n} + \sum_{k=1}^\infty R_{n-i,n+k} (I - \Phi_{n+k} ) G_{n+k,n},
\]
or
\begin{equation} \label{eqn:R}
    R_{n-i,n} (I - \Phi_{n})= P_{n-i,n} + \sum_{k=1}^\infty R_{n-i,n+k} (I - \Phi_{n+k} ) G_{n+k,n}.
\end{equation}
In the same fashion, we have
\begin{equation} \label{eqn:G}
   (I - \Phi_{n})  G_{n,n-i}  = P_{n,n-i} + \sum_{k=1}^\infty R_{n,n+k} (I - \Phi_{n+k} ) G_{n+k,n-i}.
\end{equation}
\eqref{eqn:R} and \eqref{eqn:G} are exactly the same as the detailed equations from the $RG$-factorization in \eqref{eqn:RG-F} by noticing that
\[
    \Phi_0 = P^{(0)} = P_{0,0} + \sum_{k=1}^\infty R_{0,k} (I - \Phi_{k} ) G_{k,0}.
\]
This completes the proof.
\end{proof}

When the Markov chain $P$ consists of finite levels as given in \eqref{eqn:Pij-N}, we have the following corollary.
\begin{corollary} \label{cor:RG-F-N}
For the transition matrix $P$ in \eqref{eqn:Pij-N}, the $RG$-factorization becomes
\begin{equation} \label{eqn:RG-F-N}
    I - P = (I - R_U)(I - \Phi_D)(I - G_L),
\end{equation}
where
\[ R_U = \begin{pmatrix}
0 & R_{0,1} & R_{0,2} & R_{0,3} & \cdots & R_{0,N} \\
  & 0 & R_{1,2} & R_{1,3} & \cdots & R_{1,N} \\
  &   & 0 & R_{2,3} & \cdots & R_{2,N} \\
  &   &   & \ddots & \vdots & \vdots \\
  &   &   &        & \ddots & R_{N-1,N} \\
  & & & &  & 0
\end{pmatrix}, \]
\[
    \Phi_D = \operatorname{diag}(\Phi_0, \Phi_1, \ldots, \Phi_N) \quad \text{with $\Phi_k = P^{(k)}_{k,k}$,}
\]
and
\[ G_L = \begin{pmatrix}
0 & & & & \\
G_{1,0} & 0 & & & \\
G_{2,0} & G_{2,1} & 0 & & \\
G_{3,0} & G_{3,1} & G_{3,2} & 0 & \\
\vdots  & \vdots & \vdots & \vdots & \ddots \\
G_{N,0} & G_{N,1} & G_{N,2} & \cdots & G_{N,N-1} & 0
\end{pmatrix}. \]
\end{corollary}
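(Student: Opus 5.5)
The plan is to prove Corollary~\ref{cor:RG-F-N} in one of two ways. The first is to embed the finite chain into the framework of Theorem~\ref{thm:RG-F}. The second, cleaner, route is to rerun the telescoping argument from the proof of Theorem~\ref{thm:RG-F} directly, with the iteration stopping at level $N$.

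For the direct route, I would first note the effect of starting the censoring at level $N$. With $P^{(N)}=P$, the recursion \eqref{eqn:iterations-old} (equivalently \eqref{eqn:iterations-new}) gives, for $0\le i\le n<N$,
\[
P^{(n)}_{n-i,n}=P_{n-i,n}+\sum_{k=1}^{N-n} R^{(n+k)}_{n-i,n+k}\,(I-P^{(n+k)}_{n+k,n+k})\,G^{(n+k)}_{n+k,n}.
\]
This is exactly the $K$-step identity in that proof with $K=N-n$. No limit is needed, since $P^{(N)}_{n-i,n}=P_{n-i,n}$.

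The identities used in each step are
\[
R^{(m)}_{i,m}=P^{(m)}_{i,m}(I-P^{(m)}_{m,m})^{-1},\qquad G^{(m)}_{m,j}=(I-P^{(m)}_{m,m})^{-1}P^{(m)}_{m,j}.
\]
They follow from the probabilistic definitions of the $R$- and $G$-measures applied to the censored chain $P^{(m)}$. In that chain level $m$ is the top level, so leaving $L_{\le m-1}$ means entering level $m$ and staying there until the return. Theorem~\ref{theorem1} then lets us replace $R^{(m)},G^{(m)}$ by $R,G$ and $P^{(m)}_{m,m}$ by $\Phi_m$. Doing the same for the lower blocks gives the finite analogues of \eqref{eqn:R} and \eqref{eqn:G}:
\[
R_{n-i,n}(I-\Phi_n)=P_{n-i,n}+\sum_{k=1}^{N-n}R_{n-i,n+k}(I-\Phi_{n+k})G_{n+k,n},
\]
and symmetrically for $G$.

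Next, I would expand the product $(I-R_U)(I-\Phi_D)(I-G_L)$ blockwise. Since $R_U$ is strictly upper triangular and $G_L$ is strictly lower triangular, the $(a,b)$ block equals
\[
(I-\Phi_a)\delta_{ab}-R_{a,b}(I-\Phi_b)\mathbf 1_{a<b}-(I-\Phi_a)G_{a,b}\mathbf 1_{a>b}+\sum_{c>\max(a,b)}R_{a,c}(I-\Phi_c)G_{c,b},
\]
where $c$ ranges only up to $N$. Matching each case against the corresponding block of $I-P$ uses the displayed identities:
\begin{itemize}
\item for $a<b$, the finite \eqref{eqn:R};
\item for $a>b$, the finite \eqref{eqn:G};
\item for $a=b$, the diagonal identity $\Phi_a=P^{(a)}_{a,a}=P_{a,a}+\sum_{c=a+1}^{N}R_{a,c}(I-\Phi_c)G_{c,a}$, which is the $i=0$ case of the telescoped recursion.
\end{itemize}

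The main obstacle is justifying the invertibility of $I-P^{(m)}_{m,m}$ for every $m$, including $m=0$ and $m=N$. This is needed to write $R$ and $G$ as above; at $m=0$ only $\Phi_0$ appears. It follows from irreducibility of the finite chain: for $m\ge1$, the substochastic block $P^{(m)}_{m,m}$ has spectral radius less than $1$. I would cite Lemma~\ref{lem:censored}(ii) for the irreducibility of each censored chain.
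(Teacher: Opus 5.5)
Your direct route is correct and is essentially the paper's argument. The paper obtains the corollary by specializing the telescoping proof of Theorem~\ref{thm:RG-F}; in the finite case the iteration simply stops at $K=N-n$ because $P^{(N)}=P$. Your explicit blockwise matching (including the diagonal identity for every $\Phi_a$) and your invertibility argument for $I-\Phi_m$, $m\ge 1$, fill in details the paper leaves implicit. One wording fix: do not claim invertibility ``including $m=0$''. Since $\Phi_0=P^{(0)}$ is stochastic, $I-\Phi_0$ is singular, and, as you yourself observe, it is never inverted.
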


We are now ready to connect the block-form GTH algorithm with the $RG$-factorization.

\begin{lemma}[Block–form GTH implies $RG$-factorization]
\label{lem:blockGTH-to-RG}
Let $P$ be an irreducible block–form transition matrix given by \eqref{eqn:Pij-N}.
Running the forward block–elimination of the block-form GTH algorithm gives the $RG$-factorization given in
\eqref{eqn:RG-F-N}.
\end{lemma}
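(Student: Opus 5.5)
We will show that the forward block-elimination, read as a sequence of block row and column operations on $I-P$, produces exactly the three factors in \eqref{eqn:RG-F-N}. The starting point is the identification made in Section~\ref{sec:censoring}: $P^{\langle n\rangle}=P^{(n)}$ for $n=N,N-1,\ldots,0$. So the diagonal blocks produced by the elimination are $P^{\langle n\rangle}_{n,n}=P^{(n)}_{n,n}=\Phi_n$.

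The first step is to interpret the multipliers of the elimination. When level $n$ is eliminated from the $n$-level chain $P^{(n)}$, the multipliers are
\[
P^{(n)}_{i,n}(I-\Phi_n)^{-1}\ (i<n), \qquad (I-\Phi_n)^{-1}P^{(n)}_{n,j}\ (j<n).
\]
We claim these equal $R^{(n)}_{i,n}$ and $G^{(n)}_{n,j}$, the $R$- and $G$-measures of the censored chain $P^{(n)}$. This is a first-passage decomposition inside $P^{(n)}$, where level $n$ is the top level:
\begin{itemize}
\item Starting from level $i<n$, a path makes one transition into $(n,\cdot)$. It then visits level $n$ some number of times before leaving. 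Since $n$ is the highest level, leaving level $n$ means entering $L_{\le n-1}$. This gives $R^{(n)}_{i,n}=P^{(n)}_{i,n}\sum_k\Phi_n^k$.
\item Dually, starting from level $n$, the chain stays in level $n$ for a while and then jumps to level $j$. This gives $G^{(n)}_{n,j}=\sum_k\Phi_n^kP^{(n)}_{n,j}$.
\end{itemize}
Irreducibility of $P$ implies $P^{(n)}$ is irreducible (Lemma~\ref{lem:censored}(ii)), so $\Phi_n$ is substochastic with spectral radius less than one. Hence the series converges to $(I-\Phi_n)^{-1}$. Theorem~\ref{theorem1} (invariance under censoring) then lets us replace $R^{(n)}_{i,n}$ and $G^{(n)}_{n,j}$ by $R_{i,n}$ and $G_{n,j}$.

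The second step turns one elimination into a one-level block factorization. Write $I-P^{(n)}$ in block form with respect to $L_{\le n-1}$ and $L_n$. The Schur-complement identity gives
\[
I-P^{(n)}=\begin{pmatrix} I & -R_{\cdot,n}\\ 0& I\end{pmatrix}
\begin{pmatrix} I-P^{(n-1)} & 0\\ 0 & I-\Phi_n\end{pmatrix}
\begin{pmatrix} I & 0\\ -G_{n,\cdot} & I\end{pmatrix}.
\]
Here $R_{\cdot,n}$ is the column $(R_{0,n},\ldots,R_{n-1,n})^{T}$ and $G_{n,\cdot}=(G_{n,0},\ldots,G_{n,n-1})$. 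The middle block uses the elimination formula \eqref{eqn:iterations-old}: $P^{(n)}_{i,j}+R_{i,n}(I-\Phi_n)G_{n,j}=P^{(n-1)}_{i,j}$. Checking this identity is a direct block multiplication.

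The third step iterates the second from $n=N$ down to $n=1$. Each time, the factorization is applied to the leading block $I-P^{(n-1)}$, and the result is embedded into the full $(N+1)r$-dimensional matrix with identity blocks in the lower right. The last diagonal block is $I-P^{(0)}=I-\Phi_0$. This expresses $I-P$ as:
\begin{itemize}
\item a product of elementary unit upper triangular block matrices $E_N E_{N-1}\cdots E_1$, where $E_n$ carries $R_{\cdot,n}$ in column $n$;
\item times $I-\Phi_D$;
\item times a product of elementary unit lower triangular block matrices $F_1\cdots F_N$, where $F_n$ carries $G_{n,\cdot}$ in row $n$.
\end{itemize}

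The main point needing care is ordering the products so that they collapse with no cross terms. The proposed ordering is the standard fact used in deriving $LU$ from Gaussian elimination. In $E_N\cdots E_1$, the nonzero off-diagonal column of each $E_n$ lies in rows $<n$. Multiplying on the right by $E_{n'}$ with $n'<n$ only touches column $n'$, and $E_n$ acts as the identity on rows $\ge n$. So every product of two off-diagonal parts vanishes, and $E_N\cdots E_1=I-R_U$. The symmetric argument gives $F_1\cdots F_N=I-G_L$. These conclusions must be checked against the order in which the factors actually appear once the nested factorizations are expanded. Once verified, this yields \eqref{eqn:RG-F-N} and proves the lemma.
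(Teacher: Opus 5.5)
Your proof is correct, but it assembles the factorization by a different route from the paper.

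The paper shares your first step. It identifies the elimination multipliers $P^{\langle k\rangle}_{i,k}(I-P^{\langle k\rangle}_{k,k})^{-1}$ and $(I-P^{\langle k\rangle}_{k,k})^{-1}P^{\langle k\rangle}_{k,i}$ with $R_{i,k}$ and $G_{k,i}$ through $P^{\langle n\rangle}=P^{(n)}$ and the invariance theorem. It only asserts this identification; you supply the first-passage decomposition inside the top level of $P^{(k)}$ that justifies it. The paper then simply invokes Theorem~\ref{thm:RG-F} specialized to finitely many levels. That theorem is proved by iterating the elimination recursion to obtain the entrywise identities \eqref{eqn:R} and \eqref{eqn:G}, plus the corresponding diagonal identities. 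These are then matched with the blocks of $(I-R_U)(I-\Phi_D)(I-G_L)$.

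You instead construct the factorization directly. The one-level Schur-complement identity for $I-P^{(n)}$, nested from $n=N$ down to $n=1$, gives $I-P=E_N\cdots E_1\,(I-\Phi_D)\,F_1\cdots F_N$.

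The hedge in your last paragraph can be removed. The nesting $I-P=E_N\bigl(E_{N-1}(\cdots)F_{N-1}\bigr)F_N$ produces exactly these orders. Write $E_n=I-X_n$ and $F_n=I-Y_n$.
\begin{itemize}
\item Any product $X_aX_b$ with $a>b$ vanishes: the only nonzero block column of $X_a$ is $a$, while $X_b$ is nonzero only in block rows $<b<a$.
\item Any product $Y_aY_b$ with $a<b$ vanishes: the nonzero block columns of $Y_a$ are $<a<b$, while the only nonzero block row of $Y_b$ is $b$.
\end{itemize}
Every higher-order term in the expansion contains such an adjacent pair, so all of them drop out.

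Your route gives a self-contained, finite, purely algebraic argument. It makes literal the claim that the three factors are a record of the elimination, and it checks every block at once. The paper's route is shorter once Theorem~\ref{thm:RG-F} is available. Its entrywise equations are also what carry over to the infinite-level case, where no finite product of elementary matrices exists.
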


\begin{proof}
Based on the discussion in Section~\ref{sec:censoring}, we know the forward elimination is the same as censoring, or
\[
    P^{\langle n \rangle} = P^{(n)},
\]
where $P^{\langle n \rangle}$ is the matrix from the forward elimination and $P^{(n)}$ is the censored matrix with censoring set $L_{\leq n}$. Therefore, we have
\[
P^{\langle N\rangle}:=P,\qquad
P^{\langle k-1\rangle}_{i,j}
\;=\;
P^{\langle k\rangle}_{i,j}
\;+\;
P^{\langle k\rangle}_{i,k}\,\bigl(I-P^{\langle k\rangle}_{k,k}\bigr)^{-1}P^{\langle k\rangle}_{k,j},
\quad 0\le i,j\le k-1,\; k=N,\dots,1.
\]
For each eliminated level $k$, it follows from $P^{\langle n \rangle} = P^{(n)}$ and the invariant property of $R$- and $G$-measures under censoring that
\[
R_{i,k} = P^{\langle k\rangle}_{i,k}\,\bigl(I-P^{\langle k\rangle}_{k,k}\bigr)^{-1}
\quad(0\le i<k),\qquad
G_{k,i} = \bigl(I-P^{\langle k\rangle}_{k,k}\bigr)^{-1}P^{\langle k\rangle}_{k,i}
\quad(0\le i<k).
\]
Moreover, define
\[
\Phi_D\ :=\ \mathrm{diag}\bigl(\Phi_0,\Phi_1,\dots,\Phi_N\bigr),
\qquad \Phi_k:=P^{\langle k\rangle}_{k,k}.
\]
Then, by Theorem~\ref{thm:RG-F} (specialized to the finite-level case), the $RG$-factorization holds.
\end{proof}

\paragraph{Recovering forward elimination and back substitution from the $RG$-factorization.} Next, we
recover the forward block-elimination and back block-substitution via the $RG$-factorization in \eqref{eqn:RG-F-N}.
Let $\pi=[\pi_0,\pi_1,\ldots,\pi_N]$ be the stationary row vector (to be normalized at the end). Define, by denoting $R=R_U$, $G=G_L$ and $\Phi = \Phi_D$,
\begin{equation}\label{eq:y-z-def}
y \;:=\; \pi\,(I-R),
\qquad
z \;:=\; y\,(I-\Phi).
\end{equation}
Then, the stationary equation $\pi(I-P)=0$ is equivalent to the \emph{triangular left–null} equation
\begin{equation}\label{eq:triangular-null}
z\,(I-G) \;=\; 0 .
\end{equation}

\paragraph{Forward elimination (level–wise consequences of \eqref{eq:triangular-null}).}
Since $I-G$ is unit lower block–triangular, \eqref{eq:triangular-null} forces the upper blocks of $z$ to vanish:
\begin{equation}\label{eq:z-upper-zero}
z_j \;=\; 0 \quad\text{for all } j\ge 1,
\qquad\text{and } z_0 \text{ is free.}
\end{equation}
Because $z_j = y_j\,(I-\Phi_j)$ and $I-\Phi_j$ is invertible for $j\ge 1$, we immediately obtain
\begin{equation}\label{eq:y-upper-zero}
y_j \;=\; 0 \quad\text{for all } j\ge 1,
\qquad\text{while } z_0 = y_0\,(I-\Phi_0) \in \ker_{\mathrm{left}}(I-\Phi_0).
\end{equation}
Thus $y=[\,y_0,\,0,\ldots,0\,]$ with $y_0$ a (nonzero) multiple of the stationary row of $\Phi_0$.

\paragraph{Back substitution (recovering $\pi$ from $y=\pi(I-R)$).}
Because $I-R$ is unit upper block–triangular, $y=\pi(I-R)$ reads, blockwise,
\begin{equation}\label{eq:upper-tri-system}
y_0=\pi_0,
\qquad
y_j \;=\; \pi_j \;-\; \sum_{i<j}\pi_i\,R_{i,j}\quad(j=1,\ldots,N).
\end{equation}
Using \eqref{eq:y-upper-zero}, we get $y_j=0$ for all $j\ge 1$, hence the \emph{backward recursion}
\begin{equation}\label{eq:back-recursion}
\pi_j \;=\; \sum_{i<j}\pi_i\,R_{i,j}\qquad (j=1,\ldots,N),
\end{equation}
with the level–$0$ seed $\pi_0=y_0$ chosen as the stationary row of $\Phi_0$ (up to a common positive scalar).
Finally we normalize:
\begin{equation}\label{eq:normalize}
\pi \;\leftarrow\; \frac{\pi}{\sum_{k=0}^N \mathbf{1}^\top \pi_k}\,.
\end{equation}

\begin{remark}
The equivalence between the GTH algorithm and $RG$-factorization can be extended formally for the infinite transition matrix in \eqref{eqn:Pij}.
\end{remark}

\section{Censored and approximated censored matrices for Markov chains of $M/G/1$ type}
\label{sec:RA-CM}

The second part of this paper starts with this section. According to the discussions in the first part, it is clear that the stationary distribution of the censored chain in \eqref{eqn:censored-from-infinite} gives a best approximation to the stationary probability distribution of the original (infinite-level) chain among all possible augmentation methods with the same truncation size $N$ in terms of $\ell_1$ norm. However, in general there is no simple or explicit expression for the censored matrix, or $U_N \hat{Q}_N D_N$.
For the scalar case, if the transition matrix is upper-Hessenberg, then it is well known that the censored matrix is simply the same as the last-column augmented matrix. This is no longer true for the block-upper-Hessenberg case:
\begin{equation} \label{eqn:upper-H}
  P=\begin{pmatrix}
P_{0,0} & P_{0,1} & P_{0,2} & P_{0,3} & P_{0,4} & \cdots  \\
P_{1,0} & P_{1,1} & P_{1,2} & P_{1,3} & P_{1,4} & \cdots  \\
        & P_{2,1} & P_{2,2} & P_{2,3} & P_{2,4} & \cdots  \\
        &         & P_{3,2} & P_{3,3} & P_{3,4} & \cdots \\
& & &\ddots & \ddots&  \vdots \\
& & & & \ddots & \ddots
\end{pmatrix} = \bordermatrix[{[]}]{%
    & L_{\leq N} &L^c_{\leq N} \cr
L_{\leq N} & T_N & U_N \cr
L^c_{\leq N} & D_N & Q
},
\end{equation}
for which the censored matrix $P^{(N)}$ is in general not the same as
\[
    T_N + (0, \ldots,0, C_N),
\]
where
\[
    C_N = \begin{pmatrix} \sum_{k=N+1}^{\infty} P_{0,k} \\ \vdots \\ \sum_{k=N+1}^{\infty} P_{N,k}
    \end{pmatrix}.
\]
This is true even when $P$ is the matrix of $M/G/1$ type:
\begin{equation} \label{eqn:MG1-type}
  P=\begin{pmatrix}
B_0 & B_1 & B_2 & B_3 & B_4 & \cdots   \\
A_{-1}&A_0 & A_1 & A_2 & A_3 & \cdots  \\
 & A_{-1} & A_{0} & A_{1} & A_2 & \cdots  \\
 &  & A_{-1} & A_0 & A_1  & \cdots \\
& & &\ddots & \ddots&  \vdots \\
& & & & \ddots & \ddots
\end{pmatrix}.
\end{equation}

In this section, we first obtain an exact expression for the censored matrix of $M/G/1$ type, based on which an approximated (or truncated) censored (sub-stochastic) matrix can computed efficiently. We then show that (1) the renormalized approximated censored transition matrix (RA-CM) is positive recurrent when the original is positive recurrent; and (2) the stationary probability distribution of RA-CM is an approximation of the probability distribution of the original Markov chain, which is asymptotically best in approximation error in the sense of $\ell_1$ norm.

Throughout this section, we assume that the original Markov chain, whose transition matrix is given in \eqref{eqn:MG1-type}, is positive recurrent with stationary distribution $\pi=(\pi_0, \pi_1, \pi_2, \ldots)$, partitioned according to levels too, where
\[
    \pi_n = (\pi_{n,1}, \pi_{n,2}, \ldots, \pi_{n,r}).
\]

\subsection{Exact expression of the censored matrix}

Partition the transition probability matrix $P$ in \eqref{eqn:MG1-type} according to $L_{\leq N}$ and $L^c_{\leq N}$:
\begin{equation} \label{eqn:MG1-type-b}
    P  = \bordermatrix[{[]}]{%
    & L_{\leq N} &L^c_{\leq N} \cr
L_{\leq N} & T_N & U_N \cr
L^c_{\leq N} & D_N & Q
},
\end{equation}
where
\begin{equation*}
  T_N =\begin{pmatrix}
B_0 & B_1 & B_2 & \cdots & \cdots & B_N   \\
A_{-1}&A_0 & A_1 & \cdots & \cdots & A_{N-1}  \\
 & A_{-1} & A_{0} &  \cdots & \cdots & A_{N-2} \\
 & &\ddots & \ddots& \cdots & \vdots \\
  & & & A_{-1} & A_0 & A_1 \\
 & & & & A_{-1} & A_0
\end{pmatrix},
\end{equation*}

\begin{equation*}
  U_N =\begin{pmatrix}
B_{N+1} & B_{N+2} & B_{N+3} & \cdots & \cdots &   \\
A_{N} &  A_{N+1} & A_{N+2} & \cdots & \cdots  \\
A_{N-1} &  A_{N} & A_{N+1} & \cdots & \cdots  \\
\vdots & \vdots & \vdots&  \vdots & \vdots \\
A_{1} &  A_{2} & A_{3} & \cdots & \cdots  \\
\end{pmatrix},
\end{equation*}

\begin{equation*}%\label{}
  D_N=\begin{pmatrix}
0 & \cdots & \cdots & 0 & A_{-1}  \\
0 & \cdots & \cdots & 0 & 0  \\
\vdots & \vdots & \vdots & \vdots & \vdots
\end{pmatrix},
\end{equation*}
and
\begin{equation*}%\label{}
  Q=\begin{pmatrix}
A_0 & A_1 & A_2 & A_3 &  \cdots  \\
A_{-1} & A_{0} & A_{1} & A_2 & \cdots  \\
 & A_{-1} & A_0 & A_1  & \cdots \\
  & & A_{-1} & A_0 & \cdots \\
& & &\ddots & \ddots
\end{pmatrix},
\end{equation*}
which is independent of $N$.

Now, consider the censored Markov chain of $P$ in \eqref{eqn:MG1-type} with censoring set $L_{\leq N}$, whose transition probability matrix is given by
\[
   P^{(N)} = T_N + U_N \hat{Q} D_N,
\]
where $\hat{Q} = \sum_{n=0}^{\infty} Q^n$ is the fundamental matrix of $Q$.

Let $\pi^{(N)}=(\pi^{(N)}_0, \pi^{(N)}_1, \ldots, \pi^{(N)}_N)$ be the stationary probability vector for the censored process $P^{(N)}$. Then, we have the following property:
\[
    \pi^{(N)}_n = \frac{\pi_n}{\sum_{i=0}^{N} \sum_{\alpha=1}^{r} \pi_{i,\alpha}}, \quad n=0, 1, \ldots, N.
\]

\begin{remark}[slice notation ``:'']
% ---- Indexing convention for block matrices ----
For a block matrix $M$ with block-rows/columns indexed by levels: \\
\hspace*{1.5cm}  Block-row $i$:  $M_{i,\,:}$ \\
\hspace*{1.5cm}  Block-column $j$:  $M_{:,\,j}$ \\
\hspace*{1.5cm}  Single scalar row $(i,\alpha)$ over phases:  $[M]_{(i,\alpha),\,:}$ \\
\hspace*{1.5cm}  Single scalar column $(j,\beta)$ over phases: $[M]_{:,\, (j,\beta)}$ \\

\noindent \textbf{Examples:}

Last block-column of $Q^m D$:
$(Q^m D)_{:,\,N}$

Block-row $i$ of $U$ times last block-column of $Q^m D$:
$U_{i,\, :}\,(Q^m D)_{:,\,N}$

Total mass in scalar row $(i,\alpha)$ into level $N$ (in last block-column of $\hat Q^{(M)}D)$):
$\|[\,(\hat Q^{(M)}D)_{:,\,N}\,]_{(i,\alpha),\,:}\|_1$
\end{remark}

Since $\hat{Q}$ is independent of $N$, and for any $N \geq 0$, all block-columns, except the last one, of $D_N$ are zero, we have that all block-columns, except the last one, of $\hat{Q} D_N$ are zero too. In the following, without confusion, we write $D=D_N$. Recall that
\[
    \hat{Q} D = \big ( \sum_{m=0}^{\infty} Q^m \big ) D = \sum_{m=0}^{\infty} (Q^m D).
\]
We now recursively calculate the last block-column of $Q^m D$, denoted by $(Q^m D)_{:,\, N}$, for $m=0, 1, 2, 3, 4$, based on which we identify the product-summary pattern of matrices in the next theorem.

\textbf{For $m=0$:}
\[
(Q^0 D)_{:,\, N}
=
\begin{pmatrix}
A_{-1}\\
0\\
\vdots
\end{pmatrix}.
\]

\textbf{For $m=1$:}
\[
(Q D)_{:,\, N}
=
\begin{pmatrix}
A_{0}A_{-1}\\
A_{-1}A_{-1}\\
0\\
\vdots
\end{pmatrix}.
\]

\textbf{For $m=2$:}
\[
(Q^2D)_{:,\, N}
=
\begin{pmatrix}
\bigl(A_{0}^{2}+A_{1}A_{-1}\bigr)A_{-1}\\
\bigl(A_{-1}A_{0}+A_{0}A_{-1}\bigr)A_{-1}\\
A_{-1}^{3}\\
0\\
\vdots
\end{pmatrix}.
\]

\textbf{For $m=3$:}
\[
(Q^3D)_{:,\, N}
=
\begin{pmatrix}
\bigl(A_{0}^{3}+A_{0}A_{1}A_{-1}+A_{1}A_{-1}A_{0}+A_{1}A_{0}A_{-1}+A_{2}A_{-1}^{2}\bigr)A_{-1}\\[4pt]
\bigl(A_{-1}A_{0}^{2}+A_{-1}A_{1}A_{-1}+A_{0}A_{-1}A_{0}+A_{0}^{2}A_{-1}+A_{1}A_{-1}^{2}\bigr)A_{-1}\\[4pt]
\bigl(A_{-1}^{2}A_{0}+A_{-1}A_{0}A_{-1}+A_{0}A_{-1}^{2}\bigr)A_{-1}\\[2pt]
A_{-1}^{4}\\
0\\
\vdots
\end{pmatrix}.
\]

\textbf{For $m=4$:}
\[
(Q^4D)_{:,\, N}
=
\begin{pmatrix}
X_0^{(4)}\\[4pt]
X_1^{(4)}\\[4pt]
X_2^{(4)}\\[4pt]
X_3^{(4)}\\[4pt]
X_4^{(4)}\\[2pt]
0\\
\vdots
\end{pmatrix},
\]
where
\[
\begin{aligned}
X_0^{(4)} \!&=\! \Bigl(
   A_0^4
 + A_0^2A_1A_{-1}
 + A_0A_1A_{-1}A_0
 + A_0A_1A_0A_{-1}
 + A_0A_2A_{-1}^2\\
&\qquad
 + A_1A_{-1}A_0^2
 + A_1A_{-1}A_1A_{-1}
 + A_1A_0A_{-1}A_0
 + A_1A_0^2A_{-1}
 + A_1^2A_{-1}^2\\
&\qquad
 + A_2A_{-1}^2A_0
 + A_2A_{-1}A_0A_{-1}
 + A_2A_0A_{-1}^2
 + A_3A_{-1}^3
\Bigr)A_{-1},\\[6pt]
X_1^{(4)} \!&=\! \Bigl(
   A_{-1}A_0^3
 + A_{-1}A_0A_1A_{-1}
 + A_{-1}A_1A_{-1}A_0
 + A_{-1}A_1A_0A_{-1}
 + A_{-1}A_2A_{-1}^2\\
&\qquad
 + A_0A_{-1}A_0^2
 + A_0A_{-1}A_1A_{-1}
 + A_0^2A_{-1}A_0
 + A_0^3A_{-1}
 + A_0A_1A_{-1}^2\\
&\qquad
 + A_1A_{-1}^2A_0
 + A_1A_{-1}A_0A_{-1}
 + A_1A_0A_{-1}^2
 + A_2A_{-1}^3
\Bigr)A_{-1},\\[6pt]
X_2^{(4)} \!&=\! \Bigl(
   A_{-1}^2A_0^2
 + A_{-1}^2A_1A_{-1}
 + A_{-1}A_0A_{-1}A_0
 + A_{-1}A_0^2A_{-1}
 + A_{-1}A_1A_{-1}^2\\
&\qquad
 + A_0A_{-1}^2A_0
 + A_0A_{-1}A_0A_{-1}
 + A_0^2A_{-1}^2
 + A_1A_{-1}^3
\Bigr)A_{-1},\\[6pt]
X_3^{(4)} \!&=\! \Bigl(
   A_{-1}^3A_0
 + A_{-1}^2A_0A_{-1}
 + A_{-1}A_0A_{-1}^2
 + A_0A_{-1}^3
\Bigr)A_{-1},\\[6pt]
X_4^{(4)} \!&=\; A_{-1}^5.
\end{aligned}
\]
% Blue terms are the additions/fixes relative to the previous draft.

% Cleaned version: avoid duplicate notation for s_m by using level variables \ell_r

\begin{theorem}[Constrained path expansion for \(Q^mD\)]
\label{thm:QmD-censored-clean}
Let \(P\) be the transition matrix of the \(M/G/1\) type Markov chain, given in \eqref{eqn:MG1-type} and partitioned according to censoring set $L_{\leq N}$ as in \eqref{eqn:MG1-type-b},
where \(Q\) acts on the complement levels \(\{N+1,N+2,\dots\}\), which we renumber as \(0,1,2,\dots\). Level for matrices $U:=U_N$ and $D:=D_N$ are renumbered accordingly. Recall that \(Q_{i,t}=A_{t-i}\) with \(A_k=0\) for \(k<-1\), and that \(D\) has the single nonzero block \((D)_{0,N}=A_{-1}\).

Then, for $m=0$, we have $(Q^0 D) = D$ and therefore,
\[
    (Q^0 D)_{i,N}=\mathbf{1}_{\{i=0\}}\,A_{-1}.
\]
For integers \(m\ge 1\) and \(i\ge 0\), let
\[
    \ell_0=i,\qquad \ell_u = i+\sum_{j=1}^u k_j \ (1\le u\le m),
\]
and define the admissible step-set
\[
\mathcal{K}_m(i)
= \Bigl\{ (k_1,\dots,k_m)\in\{-1,0,1,2,\dots\}^m \ \Bigm|\
\ell_u \ge 0\ (0\le u\le m),\ \ell_m=0 \Bigr\}.
\]
Equivalently, the path starts at level \(i\), never goes below \(0\), and ends at \(0\). In particular, \(\sum_{j=1}^m k_j=-i\). Then,

\begin{equation}\label{eq:QmD-clean}
(Q^m D)_{i,N}
\;=\;
\sum_{(k_1,\dots,k_m)\in\mathcal{K}_m(i)} A_{k_1}\cdots A_{k_m}\,A_{-1},
\end{equation}
with \((Q^mD)_{i,N}=0\) whenever \(i>m\).

Consequently,
\[
(\hat Q D)_{i,N}
=\sum_{m=0}^{\infty}(Q^mD)_{i,N}
=\mathbf{1}_{\{i=0\}}\,A_{-1}
+ \sum_{m=1}^{\infty}\ \sum_{(k_1,\dots,k_m)\in\mathcal{K}_m(i)} A_{k_1}\cdots A_{k_m}\,A_{-1},
\qquad i\ge 0,
\]
and the censored transition matrix $P^{(N)}$ can be explicitly expression according to  \(\ P^{(N)}=T_N+U_N\hat Q D_N\).
\end{theorem}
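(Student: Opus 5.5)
The plan is to expand the matrix product $Q^m D$ entrywise and read it as a sum over level paths of the chain restricted to the renumbered levels $\{0,1,2,\dots\}$. The base case $m=0$ is immediate from the structure of $D$: its only nonzero block is $(D)_{0,N}=A_{-1}$. For $m\ge1$ I would write, by the definition of block matrix multiplication,
\[
(Q^mD)_{i,N}=\sum_{\ell_1,\dots,\ell_{m}\ge 0} Q_{\ell_0,\ell_1}Q_{\ell_1,\ell_2}\cdots Q_{\ell_{m-1},\ell_m}\,D_{\ell_m,N},\qquad \ell_0=i,
\]
where every intermediate index runs over the renumbered complement levels, all of which are $\ge 0$. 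Because $D_{\ell_m,N}=\mathbf 1_{\{\ell_m=0\}}A_{-1}$, only sequences with $\ell_m=0$ survive.

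Next I would change variables from levels to increments, setting $k_u=\ell_u-\ell_{u-1}$. Using $Q_{\ell_{u-1},\ell_u}=A_{k_u}$ and $A_k=0$ for $k<-1$, the nonzero terms are exactly those with $k_u\in\{-1,0,1,\dots\}$. The constraints $\ell_u\ge0$ and $\ell_m=0$ then become precisely the conditions defining $\mathcal K_m(i)$. The map from level sequences to increment sequences is a bijection, so each admissible path contributes exactly one term $A_{k_1}\cdots A_{k_m}A_{-1}$, which yields \eqref{eq:QmD-clean}. The vanishing claim for $i>m$ follows because $\sum_j k_j=-i$ together with $k_j\ge -1$ forces $i\le m$, so $\mathcal K_m(i)$ is empty when $i>m$.

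As a sanity check (not needed for the proof), I would verify that this reproduces the computed columns for $m\le 4$; for example, $m=2$, $i=0$ gives the paths $(0,0)$ and $(1,-1)$. An alternative route is induction on $m$ via $Q^{m}D=Q(Q^{m-1}D)$, prepending one step $k_1$ to each path. The direct expansion is cleaner, though.

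For the consequence, summing over $m$ gives $\hat QD=\sum_m Q^mD$. All terms are nonnegative, and $\hat Q$ is finite (entrywise) since the chain is positive recurrent, hence transient on the complement before return; therefore the series can be rearranged freely, by Tonelli. Finally, $P^{(N)}=T_N+U_N\hat QD_N$ by Lemma~\ref{lem:censored}(i).

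The main obstacle is only bookkeeping. One must carefully respect the boundary: the row $Q_{0,\cdot}$ has no $A_{-1}$ entry leading out of the complement, and this is exactly encoded by the requirement $\ell_u\ge0$. One must also justify the countable sums; each entry of $Q^m D$ is a finite sum for fixed $m$, since the nonnegativity of the levels bounds the number of admissible paths from level $i$ in $m$ steps ending at $0$.
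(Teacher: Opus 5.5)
Your argument is correct and is essentially the paper's: the paper proves \eqref{eq:QmD-clean} by induction on $m$, writing $(Q^{m+1}D)_{i,N}=\sum_{t\ge 0}A_{t-i}(Q^mD)_{t,N}$ and prepending the first step $k_1=t-i$ to the admissible paths from level $t$, which is just the recursive form of your direct $m$-fold block-product expansion with $k_u=\ell_u-\ell_{u-1}$; it also settles the case $i>m$ by the same observation $\sum_j k_j\ge -m$. Your added justifications (Tonelli for $\hat QD=\sum_m Q^mD$, finiteness of each entry) go beyond what the paper writes, with one wording correction: the bound $\ell_u\le m-u$, which makes the admissible set finite, comes from $k_u\ge -1$ together with $\ell_m=0$, not from nonnegativity of the levels.
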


\begin{proof}
The result for $m=0$ is obvious. We prove the theorem by induction on \(m \geq 1\).

\emph{Base step \(m=1\).}
Using block multiplication and that \(D_{t,N}=0\) unless \(t=0\),
\[
(QD)_{i,N}=\sum_{t\ge 0}Q_{i,t}D_{t,N}=Q_{i,0}\,A_{-1}=A_{0-i}\,A_{-1}.
\]
Because \(A_k=0\) for \(k<-1\), this equals \(A_0A_{-1}\) when \(i=0\), \(A_{-1}A_{-1}\) when \(i=1\), and \(0\) for \(i\ge 2\).
On the other hand,
\[
\mathcal K_1(i)=\{(k_1): k_1\in\{-1,0,1,\dots\},\ i+k_1=0\},
\]
so \(\mathcal K_1(0)=\{(0)\}\), \(\mathcal K_1(1)=\{(-1)\}\), and \(\mathcal K_1(i)=\emptyset\) for \(i\ge 2\).
Hence
\[
(QD)_{i,N}=\sum_{(k_1)\in\mathcal K_1(i)} A_{k_1}A_{-1},
\]
which is exactly \eqref{eq:QmD-clean} for \(m=1\).

\textbf{Inductive step.}
Assume \eqref{eq:QmD-clean} holds for some \(m\ge 1\); we show it for \(m+1\).
Using \(Q_{i,t}=A_{t-i}\) and the inductive hypothesis at level \(m\),
\[
\begin{aligned}
(Q^{m+1}D)_{i,N}
&=\sum_{t\ge 0} Q_{i,t}\,(Q^mD)_{t,N}
=\sum_{t\ge 0} A_{t-i}\!\!\sum_{(k_2,\dots,k_{m+1})\in\mathcal K_m(t)} \!\!\!A_{k_2}\cdots A_{k_{m+1}}A_{-1} \\
&=\sum_{\substack{k_1\ge -1\\ i+k_1\ge 0}}
\ \sum_{(k_2,\dots,k_{m+1})\in\mathcal K_m(i+k_1)}
A_{k_1}\cdots A_{k_{m+1}}A_{-1}.
\end{aligned}
\]
But concatenating the first step \(k_1\) with any admissible \(m\)-step path from level \(i+k_1\) to \(0\) gives precisely the admissible
\((m\!+\!1)\)-step paths from \(i\) to \(0\); conversely, every element of \(\mathcal K_{m+1}(i)\) arises uniquely this way.
Therefore the double sum reduces to
\[
(Q^{m+1}D)_{i,N}
=\sum_{(k_1,\dots,k_{m+1})\in\mathcal K_{m+1}(i)} A_{k_1}\cdots A_{k_{m+1}}A_{-1},
\]
which is \eqref{eq:QmD-clean} for \(m+1\).

\textbf{Vanishing for \(i>m\).}
If \(i>m\) then any \(m\)-step sequence satisfies \(\sum_{j=1}^m k_j\ge -m\) since \(k_j\ge -1\), so it cannot reach \(-i\).
Thus \(\mathcal K_m(i)=\emptyset\) and \((Q^mD)_{i,N}=0\).
\end{proof}

%\begin{remark}[Why \(k\ge -1\)]
%Each block \(A_k\) encodes a one-step level change of \(+k\). In \(M/G/1\)-type chains, a step can drop by at most
%one level (service completion) or increase by a nonnegative amount (arrivals), hence \(A_k=0\) for \(k<-1\).
%This enforces \(k_r\ge -1\) in \(\mathcal{K}_m(i)\) and ensures the first step from \(i\) reaches a nonnegative
%intermediate level \(t=i+k_1\ge 0\).
%\end{remark}

%\begin{example}[Check of \((Q^2D)_{0,N}\)]
%From the recursion,
%\((QD)_{0,N}=A_0A_{-1}\), \((QD)_{1,N}=A_{-1}A_{-1}\), and \((QD)_{t,N}=0\) for \(t\ge 2\).
%Thus
%\[
%(Q^2D)_{0,N}
%=Q_{0,0}(QD)_{0,N}+Q_{0,1}(QD)_{1,N}
%=A_0(A_0A_{-1})+A_1(A_{-1}A_{-1})
%=A_0A_0A_{-1}+A_1A_{-1}A_{-1}.
%\]
%Our admissible set gives \(\mathcal{K}_2(0)=\{(0,0),(1,-1)\}\);
%the candidate \((-1,1)\) is invalid because the intermediate level would be \(\ell_1=-1<0\).
%\end{example}

\begin{corollary}
For irreducible positive recurrent and aperiodic Markov chains of $M/G/1$ type with its transition matrix given in \eqref{eqn:MG1-type} and partitioned as in \eqref{eqn:MG1-type-b}, define
\[
   G_0^{(m)} \;\stackrel{\triangle}{=}\;(Q^mD)_{0,N}.
\]
Then,
 the (fundamental-period) matrix $G$ can be explicitly expressed as \(
    G =\sum_{m=0}^{\infty} G_0^{(m)}.
\)
\end{corollary}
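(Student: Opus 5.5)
We prove the corollary by reading $G_0^{(m)}=(Q^mD)_{0,N}$ probabilistically and identifying $\sum_m G_0^{(m)}$ with the first-passage-down probabilities from level $N+1$ to level $N$.

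By Theorem~\ref{thm:QmD-censored-clean}, $G_0^{(m)}$ is a sum over admissible paths $(k_1,\dots,k_m)\in\mathcal K_m(0)$ followed by a final factor $A_{-1}$. Its $(\alpha,\beta)$ entry is therefore the probability that the chain, started in $(N+1,\alpha)$, takes $m$ steps without leaving $L^c_{\le N}$, sits at level $N+1$ after those $m$ steps, and then at step $m+1$ moves down to $(N,\beta)$. The prefactor $(Q^m)_{0,0}$ is the $m$-step taboo transition matrix within $L^c_{\le N}$ from renumbered level $0$ to itself. The admissibility constraint $\ell_u\ge 0$ is exactly the requirement that the path stays in $L^c_{\le N}$.

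Because the chain is skip-free to the left ($A_k=0$ for $k<-1$), any passage from level $N+1$ into $L_{\le N}$ must enter at level $N$. It must also do so from level $N+1$ through a single $A_{-1}$ transition. Hence the events indexed by $m$ (first entrance into $L_{\le N}$ at time $m+1$) are disjoint and exhaust all first-passage paths. Summing over $m$ gives
\[
\sum_{m=0}^\infty G_0^{(m)}(\alpha,\beta)=P\bigl[\text{first entrance into } L_{\le N} \text{ is at } (N,\beta)\mid X_0=(N+1,\alpha)\bigr].
\]
Summability is automatic by monotone convergence, since all terms are nonnegative. This sum is precisely $(\hat Q D)_{0,N}$, and by the $G$-measure definition in Section~\ref{sec:RG-F} it equals $G_{N+1,N}$.

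It remains to identify $G_{N+1,N}$ with the matrix $G$. The repeating structure of \eqref{eqn:MG1-type} for levels $\ge 1$ means the taboo dynamics above level $N$ are the same for every $N\ge 0$; indeed $Q$ and $D$ do not depend on $N$. Thus $G_{N+1,N}$ is the same matrix $G$ for all $N$. This is the standard fundamental-period matrix of Neuts, i.e.\ the first-passage probability one level down for the homogeneous part.

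The only delicate point is the claimed equality with Neuts' $G$, which is usually characterized as the minimal nonnegative solution of $G=\sum_{k\ge -1}A_kG^{k+1}$. To connect the two, first-step analysis on $\hat Q D$ gives $(\hat QD)_{0,N}=A_{-1}+\sum_{k\ge0}A_k(\hat QD)_{k,N}$. Skip-freeness gives the factorization $(\hat QD)_{k,N}=G^{k+1}$, because descending $k+1$ levels requires $k+1$ successive one-level first passages (strong Markov property). Minimality then follows from the monotone, increasing-in-$m$ construction of the partial sums. Positive recurrence, hence recurrence, makes $G$ stochastic, although the identification does not need this; the assumption is used only to ensure the object is the standard one.
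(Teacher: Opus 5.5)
Your proof is correct and follows the paper's route. The paper's proof is a one-line citation of Lemma~4 of \cite{Zhao:2000}. Your argument supplies the first-entrance reasoning behind that citation directly, from Theorem~\ref{thm:QmD-censored-clean} and skip-freeness: $(\hat QD)_{0,N}=G_{N+1,N}$, and this is independent of $N$ by level homogeneity, hence equal to $G$.

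Your last paragraph is unnecessary, since $G$ is the first-passage matrix by definition. If you keep it, note that minimality does not follow from monotonicity of the partial sums alone. It follows from the induction $(\hat Q^{(M)}D)_{t,N}\le X^{t+1}$ for all $t\ge 0$, which holds for every nonnegative solution $X$ of $X=\sum_{k\ge -1}A_kX^{k+1}$ by way of $\hat Q^{(M+1)}D=D+Q\,\hat Q^{(M)}D$.
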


\proof It follows directly from Lemma~4 of \cite{Zhao:2000}. \QED

\subsection{Approximating the censored Markov chain for Markov chain of $M/G/1$ type}

We cannot directly use the explicit expression to compute the censored matrix $P^{(N)}$ since the fundamental matrix $\hat{Q}$ consists of a sum of infinite terms. A natural approximation for the censored matrix is
\[
   P^{(N;M)} \; := \; T_N \;+\; U(\hat Q^{(M)})D,
\]
where
\[
    \hat Q^{(M)} := \sum_{m=0}^{M} Q^m.
\]
$P^{(N;M)}$ is referred to as an approximated (or truncated) censored matrix.

In the following, we provide an error bound based on which we can determine a stop point (or $M$) such that the error is bounded by any arbitrarily given $\varepsilon>0$.

For $m\ge0$ define
\begin{equation}\label{eqn:truncated}
    G^{(m)} := (Q^mD)_{:,\,N} =
\begin{pmatrix}
G^{(m)}_0\\[4pt]
G^{(m)}_1\\[4pt]
G^{(m)}_2\\[4pt]
\vdots
\end{pmatrix},
\end{equation}
where block $G^{(m)}_s$ is an $r\times r$ nonnegative matrix.
Then,
\[
    \hat Q^{(M)} D = \sum_{m=0}^M G^{(m)}.
\]

% ----------------------------------------------------------------
% Theorem: error bound for M--truncation of \hat Q D and of P^{L_{\le N}}
% ----------------------------------------------------------------
\begin{theorem}[Error bound for $M$--truncation]
\label{thm:truncation-error}
Define the truncation error column
\[
E^{(M)}
\;:=\; (\hat Q D)_{:,\, N} - (\hat Q^{(M)} D)_{:,\, N}
\;=\; \sum_{m=M+1}^\infty G^{(m)}.
\]
Let $\Delta^{(M)} := U\,E^{(M)} = U\bigl(\hat Q D - \hat Q^{(M)} D \bigr)$ be the error matrix between the censored matrix and the approximated censored matrix, which is a matrix of $(N+1) \times (N+1)$ blocks,.

%Write each block $G^{(m)}_s$ (the block of $G^{(m)}$ corresponding to the source level $s$ in $L_{\le N}^c$)
%as an $r\times r$ nonnegative matrix.
We use indices $(i,\alpha)$ for an origin state in $L_{\le N}$ (with $0\le i\le N$ and phase $\alpha\in\{1,\dots,r\}$),
and $(s,p)$ for a source state in the complement $L_{\le N}^c$ (with $s\ge 0$ and phase $p\in\{1,\dots,r\}$).
For each $s\ge 0$ and $p\in\{1,\dots,r\}$ define the \emph{captured return mass}
\[
r_{s,p}^{(M)} \;:=\; \sum_{m=0}^M \sum_{d=1}^r \bigl[G^{(m)}_s\bigr]_{p,d},
\]
i.e., the total probability (starting from $(s,p)$) of returning to the $N$th-column target
after spending at most $M$ steps in $L_{\le N}^c$.
Then the following hold.

%\begin{enumerate}[\upshape(a)]
\begin{enumerate}[label=\textup{(\alph*)}]
\item \emph{Entrywise representation and nonnegativity.}
For every source level $s\ge 0$,
\[
E^{(M)}_s \;=\; \sum_{m=M+1}^\infty G^{(m)}_s,
\]
hence all entries of $E^{(M)}$ and $\Delta^{(M)}$ are nonnegative.

\item \emph{Computable a posteriori bound.}
For any origin state $(i,\alpha)$ with $0\le i\le N$ and any destination phase $\beta\in\{1,\dots,r\}$ in the $N$th block,
\[
    \bigl[\Delta^{(M)}_{i,N}\bigr]_{\alpha,\beta}
\;\le\;
\sum_{s\ge 0}\sum_{p=1}^r U_{i,s}(\alpha,p)\;\bigl(1-r_{s,p}^{(M)}\bigr),
\]
where $U_{i,s}(\alpha,p)$ denotes the $(\alpha,p)$ entry of the block $U_{i,s}$.
All quantities on the right-hand side are computable from the truncated iterates $G^{(0)},\dots,G^{(M)}$.

\item \emph{Uniform stopping criterion.}
Since for any fixed origin state $(i,\alpha)$ one has
\(\sum_{s\ge 0}\sum_{p=1}^r U_{i,s}(\alpha,p)\le 1\) (the probability that the chain jumps from
\((i,\alpha)\) into $L_{\le N}^c$ in one step), the previous display implies the uniform bound
\[
\max_{i,\alpha,\beta} \bigl[\Delta^{(M)}_{i,N}\bigr]_{\alpha,\beta}
\;\le\;
\max_{s\ge 0}\max_{p=1,\dots,r} \bigl(1-r_{s,p}^{(M)}\bigr).
\]
Therefore, if the computed captured masses satisfy
\[
\max_{s\ge 0,\;p}\ r_{s,p}^{(M)} \; \ge \; 1-\varepsilon,
\]
then every scalar entry of the truncation error $\Delta^{(M)}$ is bounded by $\varepsilon$.
In particular, this gives a straightforward, fully computable stopping rule: stop the recursion when
\(\max_{s\ge 0,\;p}\bigl(1-r_{s,p}^{(M)}\bigr)\le\varepsilon\).
\end{enumerate}
\end{theorem}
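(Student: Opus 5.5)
The plan is to read each block $G^{(m)}_s$ probabilistically and then reduce all three parts to one identity: the total return mass from any state of $L_{\le N}^c$ equals one. By the definition of $Q$ and $D$ in \eqref{eqn:MG1-type-b}, the entry $[G^{(m)}_s]_{p,d}=[(Q^mD)_{s,N}]_{p,d}$ is the probability of the following event. Starting from state $(s,p)$ of the complement (renumbered as in Theorem~\ref{thm:QmD-censored-clean}), the chain makes $m$ transitions inside $L_{\le N}^c$. At the next transition it enters $L_{\le N}$, and it lands in state $(N,d)$. This is the usual path expansion of $Q^m$ followed by one step of $D$; it is also visible from the constrained-path formula \eqref{eq:QmD-clean}.

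\emph{Part (a).} Every $Q^m$, $D$ and $U$ is nonnegative. The series $\hat QD=\sum_{m\ge0}Q^mD$ converges entrywise, since $\hat Q$ is the minimal nonnegative inverse of $I-Q$ in Lemma~\ref{lem:censored}(i). Taking the $s$-th block of the tail therefore gives $E^{(M)}_s=\sum_{m>M}G^{(m)}_s\ge 0$, and hence $\Delta^{(M)}=UE^{(M)}\ge0$. Only the last block column of $\Delta^{(M)}$ can be nonzero, because all other block columns of $D$ vanish.

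\emph{Part (b).} This is the key step. The chain is irreducible and positive recurrent, hence recurrent. So starting from any $(s,p)\in L_{\le N}^c$, it enters $L_{\le N}$ almost surely. It is also skip-free to the left ($A_k=0$ for $k<-1$), so its first entrance into $L_{\le N}$ is necessarily into level $N$. Summing over the number $m$ of steps spent in the complement and over the entry phase $d$ gives
\[
\sum_{m=0}^\infty\sum_{d=1}^r[G^{(m)}_s]_{p,d}=1 .
\]
Equivalently, $\hat QD$ has unit row sums: it is the first-passage distribution onto $L_{\le N}$, which in this setting is concentrated on level $N$. Consequently the $(p)$-th row sum of $E^{(M)}_s$ is exactly $1-r^{(M)}_{s,p}$. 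Then
\[
[\Delta^{(M)}_{i,N}]_{\alpha,\beta}=\sum_{s\ge0}\sum_{p}U_{i,s}(\alpha,p)\,[E^{(M)}_s]_{p,\beta},
\]
and bounding the single entry $[E^{(M)}_s]_{p,\beta}$ by its row sum yields the bound in (b). The main obstacle is justifying the unit row sums rigorously. Recurrence alone gives almost-sure return to $L_{\le N}$, and skip-freeness gives that the return lands in level $N$; I would state both explicitly, rather than appeal to minimality of $\hat Q$ (which only gives row sums at most one).

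\emph{Part (c).} For fixed $(i,\alpha)$, the weights $U_{i,s}(\alpha,p)$ are nonnegative and sum to at most one, since they form the one-step probability of jumping from $(i,\alpha)$ into the complement. The right-hand side of (b) is thus at most $\sup_{s,p}(1-r^{(M)}_{s,p})$. The stopping rule then follows by taking the supremum over $s,p$ of $1-r^{(M)}_{s,p}$ and requiring it to be at most $\varepsilon$. This means the condition must read with $\min$ (or $\inf$) of $r^{(M)}_{s,p}$ rather than $\max$, and I would state it that way.

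A caveat should accompany this part. Theorem~\ref{thm:QmD-censored-clean} gives $G^{(m)}_s=0$ for $s>m$, so $r^{(M)}_{s,p}=0$ for $s>M$. The uniform bound is therefore only informative if the supremum is restricted to the source levels $s$ actually reachable under $U_{i,:}$. Alternatively, one can use the weighted form in (b) directly, where the factors $U_{i,s}$ for large $s$ supply the smallness.
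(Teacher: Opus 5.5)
Your proof is correct and follows the paper's argument: the tail series is nonnegative, you bound $[E^{(M)}_s]_{p,\beta}$ by its row sum, and then you use $\sum_{s,p}U_{i,s}(\alpha,p)\le 1$. The one difference is that you prove the total return mass $\sum_{m\ge 0}\sum_{d}[G^{(m)}_s]_{p,d}$ \emph{equals} $1$ via recurrence and skip-freeness, whereas the paper only uses that it is at most $1$. That already suffices for (b), so the step you call the main obstacle is not needed for the inequality; your equality is, however, what justifies $r^{(M)}_{s,p}\uparrow 1$, which the paper later asserts without proof.

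Your remarks on (c) are correct and catch genuine slips that the paper's statement and proof share. The hypothesis must read $\min_{s,p} r^{(M)}_{s,p}\ge 1-\varepsilon$; the paper writes $\max$, which does not imply the conclusion. Moreover, $r^{(M)}_{s,p}=0$ for every $s>M$, so the unrestricted uniform bound is identically $1$, and only the weighted bound (b) carries information.
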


\begin{proof}
Recall $G^{(m)}=(Q^mD)_{:,\,N}$, $\hat Q^{(M)} D =\sum_{m=0}^M G^{(m)}$,
$E^{(M)}=\sum_{m=M+1}^\infty G^{(m)}$, and $\Delta^{(M)}=U\,E^{(M)}$.
All blocks are entrywise nonnegative, and all series are termwise nondecreasing, hence (by monotone convergence)
well-defined entrywise.

\smallskip
\noindent\emph{(a) Entrywise representation and nonnegativity.}
By definition of $E^{(M)}$, taking the block at any fixed source level $s\ge 0$ gives
$E^{(M)}_s=\sum_{m=M+1}^\infty G^{(m)}_s$.
Since every $G^{(m)}_s$ is entrywise nonnegative, the same holds for $E^{(M)}_s$ and thus for
$\Delta^{(M)}=U\,E^{(M)}$.

\smallskip
\noindent\emph{(b) Computable a posteriori bound.}
Fix an origin state $(i,\alpha)$ with $0\le i\le N$ and a destination phase $\beta\in\{1,\dots,r\}$ in the $N$th block. Then,
\[
\bigl[\Delta^{(M)}_{i,N}\bigr]_{\alpha,\beta}
=\sum_{s\ge 0}\sum_{p=1}^r U_{i,s}(\alpha,p)\,\bigl[E^{(M)}_s\bigr]_{p,\beta}
\;\le\;
\sum_{s\ge 0}\sum_{p=1}^r U_{i,s}(\alpha,p)\,
\Bigl(\sum_{d=1}^r \bigl[E^{(M)}_s\bigr]_{p,d}\Bigr),
\]
where we bounded a fixed entry of the nonnegative row by its row sum.
Using $E^{(M)}_s=\sum_{m=M+1}^\infty G^{(m)}_s$ and exchanging the finite sum over $d$ with the series,
\[
\sum_{d=1}^r \bigl[E^{(M)}_s\bigr]_{p,d}
=\sum_{m=M+1}^\infty \sum_{d=1}^r \bigl[G^{(m)}_s\bigr]_{p,d}
=\Bigl(\sum_{m=0}^\infty \sum_{d=1}^r \bigl[G^{(m)}_s\bigr]_{p,d}\Bigr)
-\sum_{m=0}^{M} \sum_{d=1}^r \bigl[G^{(m)}_s\bigr]_{p,d}.
\]
By definition $r_{s,p}^{(M)}=\sum_{m=0}^{M} \sum_{d=1}^r [G^{(m)}_s]_{p,d}$,
and the full series $\sum_{m=0}^\infty \sum_{d} [G^{(m)}_s]_{p,d}$ is the (entrywise) probability
that, starting from the source state $(s,p)\in L_{\le N}^c$, one eventually returns to the $N$th-column
target via $D$ after spending some number of steps in $L_{\le N}^c$. Hence, this total probability is
$\le 1$, and we obtain
\[
\sum_{d=1}^r \bigl[E^{(M)}_s\bigr]_{p,d}
\;\le\; 1 - r_{s,p}^{(M)}.
\]
Substituting into the previous display yields
\[
\bigl[\Delta^{(M)}_{i,N}\bigr]_{\alpha,\beta}
\;\le\;
\sum_{s\ge 0}\sum_{p=1}^r U_{i,s}(\alpha,p)\,\bigl(1-r_{s,p}^{(M)}\bigr),
\]
which is the claimed bound.

\smallskip
\noindent\emph{(c) Uniform stopping criterion.}
For fixed $(i,\alpha)$, the row sum $\sum_{s\ge 0}\sum_{p=1}^r U_{i,s}(\alpha,p)$
is the one-step probability that the chain jumps from $(i,\alpha)\in L_{\le N}$ into the complement $L_{\le N}^c$,
hence it is $\le 1$. Therefore,
\[
\bigl[\Delta^{(M)}_{i,N}\bigr]_{\alpha,\beta}
\;\le\;
\max_{s\ge 0}\max_{p=1,\dots,r} \bigl(1-r_{s,p}^{(M)}\bigr)
\sum_{s\ge 0}\sum_{p=1}^r U_{i,s}(\alpha,p)
\;\le\;
\max_{s\ge 0,\;p}\bigl(1-r_{s,p}^{(M)}\bigr).
\]
Taking the maximum over $(i,\alpha,\beta)$ gives the displayed uniform bound.
Consequently, if $\max_{s\ge 0,\;p} r_{s,p}^{(M)} \ge 1-\varepsilon$, then every scalar entry of
$\Delta^{(M)}$ is $\le \varepsilon$, which furnishes the stated stopping rule.
\end{proof}

For Markov chains of $M/G/1$ type,
recall that the transition matrix of the censored Markov chain with censoring set \( L_{\leq N} \) for \( N \geq 0 \) is given by
\[
    P^{(N)} \;=\; T_N \;+\; U\,\hat Q\,D
\quad \text{with} \quad \hat Q \;=\; \sum_{m=0}^{\infty} Q^m,
\]
where all block-entries of \(D\) are zero except the single block
\[
D_{(s=0,\;j=N)} \;=\; A_{-1},
\]
i.e., from the first complement level (\(s=0\)) back to censored level \(N\). Therefore, \(P^{(N)}\) differs from \(T_N\) only in its \emph{last} block-column, which is
\[
    P^{(N)} \;=\; T_N \;+\; (\,0,\,0,\,\dots,\,0,\,C_N^{(\infty)}\,),
    \qquad
    C_N^{(\infty)} \;:=\; \bigl[\,U\,\hat Q\,D\,\bigr]_{:,\, N}.
\]
For a truncation depth \(M\ge 0\), let the \emph{truncated fundamental matrix} be
\[
\hat Q^{(M)} := \sum_{m=0}^{M} Q^m,
\quad\text{and set}\quad
C_N^{(M)} := \bigl[\,U\,(\hat Q^{(M)})D\,\bigr]_{:,\,N}.
\]
Hence, the \(M\)-approximated censored matrix (kernel) on \(L_{\le N}\) is
\[
P^{(N;M)} \;=\; T_N \;+\; (\,0,\,0,\,\dots,\,0,\,C_N^{(M)}\,),
\]
which is generally \emph{sub-stochastic} rowwise.

\subsubsection{Rowwise Renormalization to a Stochastic Matrix}

In general, the approximated (or truncated) censored matrix is only sub-stochastic. A standard method is to ``renormalize" it into a stochastic matrix and to use its stationary probability distribution as an approximation of the distribution of the censored Markov chain. In this section,
we adopt a per-row renormalization scheme that makes the approximated censored matrix exactly stochastic by enforcing each scalar row to sum to one. This corrects the approximation error introduced by truncating the infinite tail.

\paragraph{Target row masses and current tail row masses.}
For each scalar row \((i,\alpha)\) with \(i\in\{0,\dots,N\}\) and \(\alpha=1,\dots,r\), define the target (missing) mass
\[
c_{i,\alpha} \;:=\; 1 - \sum_{j=0}^{N} \sum_{\beta=1}^r \bigl[T_{N,i,j}\bigr]_{\alpha\beta},
\]
and the current approximated tail mass
\[
c_{i,\alpha}^{(M)} \;:=\; \sum_{\beta=1}^r \bigl[C_N^{(M)}\bigr]_{(i,\alpha),\,\beta}.
\]
Note \(c_{i,\alpha}\ge 0\) and \(0\le c_{i,\alpha}^{(M)} \nearrow c_{i,\alpha}\) as \(M\to\infty\).

\paragraph{Rowwise scaling factors and block-diagonal scaling matrix.}
Define the nominal rowwise scaling \(s_{i,\alpha}^{(M)}:=c_{i,\alpha}/c_{i,\alpha}^{(M)}\) when \(c_{i,\alpha}^{(M)}>0\). The renormalization of $C^{(M)}_N$ is defined by
\[
[\widetilde C_N^{(M)}]_{(i,\alpha), :} \;:=\;
\begin{cases}
\displaystyle s_{i,\alpha}^{(M)}\, [C_N^{(M)}]_{(i,\alpha),:},
& \text{if } c_{i,\alpha}^{(M)}>0, \\[10pt]
c_{i,\alpha} (1/r, \ldots, 1/r), & \text{if } c_{i,\alpha}^{(M)} =0 \ \text{and}\ c_{i,\alpha} > 0,\\[6pt]
(0, \ldots, 0), & \text{if }  c_{i,\alpha}=0.
\end{cases}
\]
Equivalently, one may define
\(
\Gamma_i^{(M)} := \mathrm{diag}( s_{i,1}^{(M)},\dots,s_{i,r}^{(M)} )
\)
and
\(
S^{(M)} := \mathrm{diag}(\Gamma_0^{(M)},\dots,\Gamma_N^{(M)})
\),
provided the piecewise rule above is used whenever \(c_{i,\alpha}^{(M)}\le \tau\) and \(c_{i,\alpha}\ge c_{i,\alpha}^{(M)}\).
Then,
\[
\widetilde C_N^{(M)} \;=\; S^{(M)}\,C_N^{(M)}.
\]
The normalized kernel (normalized approximated censored matrix, RA-CM), define by
\[
\widetilde P^{(N;M)}
\;:=\; T_N \;+\; (\,0,\,0,\,\dots,\,0,\,\widetilde C_N^{(M)}\,)
\;=\;
\bigl[\,
T_{N, :,0}, \dots, T_{N, :,N-1},\; T_{N, :,N} + \widetilde C_N^{(M)}
\,\bigr],
\]
is stochastic since for each scalar row \((i,\alpha)\),
\[
\sum_{j=0}^N \sum_{\beta=1}^r \bigl[\widetilde P^{(N;M)}_{i,j}\bigr]_{\alpha\beta}
=\underbrace{\sum_{j,\beta}\bigl[T_{N,i,j}\bigr]_{\alpha\beta}}_{=\,1-c_{i,\alpha}}
+\underbrace{\sum_{\beta}\bigl[\widetilde C_N^{(M)}\bigr]_{(i,\alpha),\beta}}_{=\,s_{i,\alpha}^{(M)}c_{i,\alpha}^{(M)}=c_{i,\alpha}}
=1.
\]

\begin{remark}
Considering computational errors and avoiding numerical blow-up, when implementing the rowwise scaling (normalization), a robust rule on rowwise normalization can be introduced with the worst case scenario for computing the RA-CM $\widetilde{P}^{(N:M)}$ being: $\Theta\big(r^3(M^3+(N{+}1)M)\big)\;+\;\Theta\big((N{+}1) r^2\big)$, where we say $f(n) \in \Theta (g(n))$ if there are positive constants $c_1$ and $c_2$ and a positive integer $n_0$ such that
\[
    c_1 g(n) \leq f(n) \leq c_2 g(n), \quad \text{for $n \geq n_0$.}
\]
 Please refer to Section~\ref{sec:example} for more discussions.
\end{remark}

\begin{remark}
Based on Theorem~\ref{thm:truncation-error}, an error bound for the difference between $\widetilde P^{(N;M)}$ and $P^{(N)}$ can be established, which goes to 0 (or $\widetilde P^{(N;M)} \to P^{(N)}$) as $M \to \infty$.
\end{remark}

In the following, we provide required properties such that we can use the stationary probability vector \( \widetilde{\pi}^{(N:M)} \) of the RA-CM \(\widetilde P^{(N;M)}\) as an approximation of the stationary probability vector \(\pi^{(N)}\) of the censored Markov chain \(P^{(N)}\), and therefore an approximation of the stationary probability vector \(\pi\) of the original Markov chain \(P\) in \eqref{eqn:MG1-type} when $N$ is large.

%
%Finally, for the RA-CM to have a unique stationary probability distribution, which is an approximation of the distribution of the censored Markov chain, we establish the following properties.

\begin{proposition}[Irreducibility of the truncated censored kernel]
\label{prop:PNM-irreducible}
Assume the original chain $P$, given in \eqref{eqn:MG1-type}, is \emph{irreducible and positive recurrent}
(equivalently, $P$ has a single positive recurrent communicating class and a unique stationary distribution).
Fix $N\ge 0$. Then the exact censored kernel
\[
P^{(N)} \;=\; T_N \;+\; U\hat Q D,
\qquad \hat Q := \sum_{m=0}^\infty Q^m,
\]
on the finite state space $L_{\le N}$ is irreducible. For $M\ge 0$ define
\[
P^{(N;M)} \;=\; T_N \;+\; U(\hat Q^{(M)})D,
\qquad \hat Q^{(M)} := \sum_{m=0}^{M} Q^m .
\]
Then there exists $M_0<\infty$ such that $P^{(N;M)}$ is irreducible (extending the concept of irreducibility to nonnegative matrices) for all $M\ge M_0$.
\end{proposition}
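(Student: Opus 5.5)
The proof has two parts: irreducibility of the exact censored kernel $P^{(N)}$, and then stability of irreducibility under $M$-truncation.

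\textbf{Step 1: irreducibility of $P^{(N)}$.} This follows directly from Lemma~\ref{lem:censored}-(ii) with $E=L_{\le N}$, since $P$ is irreducible. For a self-contained argument, take two states $x,y\in L_{\le N}$. Irreducibility of $P$ gives a finite path $x=x_0\to x_1\to\cdots\to x_n=y$ of positive-probability transitions of $P$. Let $x_{t_0}, x_{t_1},\dots$ be the successive visits of this path to $L_{\le N}$. Each excursion between consecutive visits is of one of two kinds:
\begin{itemize}
\item a direct step, contributing a positive entry of $T_N$;
\item a step into $L^c_{\le N}$, a sojourn of some length $m$ there, and a return step, contributing a positive term of $U Q^m D$.
\end{itemize}
In either case the corresponding entry of $P^{(N)}=T_N+U\hat QD$ is positive. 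Hence $y$ is reachable from $x$ in the directed graph of $P^{(N)}$.

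\textbf{Step 2: reduction to the positivity pattern.} Irreducibility of a nonnegative finite matrix depends only on its zero/nonzero pattern, so I will show that $P^{(N;M)}$ and $P^{(N)}$ have the same pattern for all large $M$. Entrywise we have
\[
0\le T_N\le P^{(N;M)}\le P^{(N;M+1)}\le P^{(N)},
\]
so the set of positive entries of $P^{(N;M)}$ is nondecreasing in $M$ and contained in that of $P^{(N)}$. Also, $P^{(N;M)}\to P^{(N)}$ entrywise by monotone convergence, since $U\hat Q^{(M)}D\nearrow U\hat QD$; this is the nonnegativity in Theorem~\ref{thm:truncation-error}(a).

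\textbf{Step 3: choice of $M_0$.} The matrix $P^{(N)}$ has only finitely many entries, namely $((N+1)r)^2$. For each positive entry $(x,y)$ there is therefore a finite $M_{x,y}$ with $[P^{(N;M_{x,y})}]_{x,y}>0$: the series defining it has a positive partial sum at some finite index. Set $M_0$ to be the maximum of $M_{x,y}$ over this finite set. For $M\ge M_0$, monotonicity gives $P^{(N;M)}$ and $P^{(N)}$ the same positivity pattern, hence the same directed graph, and irreducibility transfers from Step 1.

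Alternatively, one can avoid the pattern argument by keeping only the finitely many edges used in the Step 1 paths for each ordered pair $(x,y)$. Each such edge uses excursions of bounded length, and $M_0$ is taken as the maximum of these lengths.

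\textbf{Main obstacle.} The only delicate point is Step 1. One must confirm that censoring preserves irreducibility even though excursions into $L^c_{\le N}$ may be arbitrarily long. Note that positive recurrence is not needed here, only irreducibility. The excursions are always of finite length along any given finite path, so finiteness of the state space $L_{\le N}$ is what makes the uniform $M_0$ exist.
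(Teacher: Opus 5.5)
Your proposal is correct and follows essentially the same route as the paper. You compress excursions through $L_{\le N}^c$ to get irreducibility of $P^{(N)}$, then use the nonnegativity of $\sum_m UQ^mD$ and the finiteness of $L_{\le N}$ to extract a finite $M_0$. The only difference is that you match the full positivity pattern of $P^{(N)}$ rather than just the edges on fixed paths; your stated alternative is exactly the paper's version.
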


\begin{proof}
\textit{Step 1 (irreducibility passes to censoring).}
Since $P$ is irreducible, for any $x,y\in L_{\le N}$ there exists a finite path under $P$
from $x$ to $y$ which may leave and re-enter $L_{\le N}$ finitely many times.
Censoring compresses each excursion through the complement into a single transition whose
probability is the corresponding entry of $U\hat Q D\ (\ge 0)$; transitions staying inside
$L_{\le N}$ are recorded in $T_N$. Thus $P^{(N)}=T_N+U\hat Q D$ has a strictly positive path
from $x$ to $y$ for every $x,y\in L_{\le N}$, hence is irreducible.

\smallskip
\textit{Step 2 (finite-$M$ truncation preserves enough positive edges).}
Because $P^{(N)}$ is irreducible on the finite set $L_{\le N}$, for each ordered pair
$x,y\in L_{\le N}$ fix a finite directed path
$x=x_0\to x_1\to\cdots\to x_L=y$ with strictly positive edge probabilities under $P^{(N)}$.
Each edge $x_{t-1}\to x_t$ is realized either by (i) a block of $T_N$, or (ii) by the last
block-column $[U\hat Q D]_{:,\,N}=\sum_{m\ge 0}[UQ^m D]_{:,\,N}$.

\emph{Positivity extraction:} if $(A_m)_{m\ge 0}$ are entrywise nonnegative and
$\sum_{m\ge 0}A_m$ has a strictly positive $(i,j)$ entry, then there exists $m^\star$ with
$(A_{m^\star})_{ij}>0$. Apply this to $A_m:=UQ^mD$ for each edge of type (ii) to obtain a
finite set of indices $\mathcal M$ (one $m$ per such edge). Let $M_0:=\max\mathcal M<\infty$.

For any $M\ge M_0$, the truncated last column $U(\hat Q^{(M)})D=\sum_{m=0}^{M}UQ^mD$
contains every positive edge of type (ii) used by our fixed paths, while edges of type (i)
are unchanged. Hence the same paths are strictly positive under $P^{(N;M)}$ for all
$M\ge M_0$, proving irreducibility.
\end{proof}

Notice that irreducibility is not impacted by the renormalization, we immediately have the following
\begin{corollary}
Under the assumptions of Proposition~\ref{prop:PNM-irreducible}, there exists $M_0<\infty$ such that the RA-CM $\widetilde P^{(N;M)}$ is irreducible for all $M\ge M_0$.
\end{corollary}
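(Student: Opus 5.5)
The plan is to show that the renormalization step, passing from $P^{(N;M)}$ to $\widetilde P^{(N;M)}$, never destroys a strictly positive entry. Irreducibility of a finite nonnegative matrix depends only on its zero pattern: a directed edge $x\to y$ exists iff the $(x,y)$ entry is positive. It therefore suffices to show that the support of $P^{(N;M)}$ is contained in the support of $\widetilde P^{(N;M)}$. Then every strictly positive path constructed in Proposition~\ref{prop:PNM-irreducible} for $M\ge M_0$ remains strictly positive after renormalization, with the same $M_0$.

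First, the two matrices agree on all block-columns $0,\dots,N-1$, since both equal the corresponding block-columns of $T_N$. In the last block-column, $P^{(N;M)}$ has block-column $T_{N,:,N}+C_N^{(M)}$ and $\widetilde P^{(N;M)}$ has $T_{N,:,N}+\widetilde C_N^{(M)}$, with every term nonnegative. So it suffices to check, scalar row by scalar row $(i,\alpha)$, that every positive entry of $[C_N^{(M)}]_{(i,\alpha),:}$ is also positive in $[\widetilde C_N^{(M)}]_{(i,\alpha),:}$. I will split according to the three cases of the piecewise renormalization rule.
\begin{itemize}
\item If $c^{(M)}_{i,\alpha}>0$: since $0\le c^{(M)}_{i,\alpha}\le c_{i,\alpha}$, the factor $s^{(M)}_{i,\alpha}=c_{i,\alpha}/c^{(M)}_{i,\alpha}\ge 1>0$, so the row is multiplied by a positive scalar and its support is preserved.
\item If $c^{(M)}_{i,\alpha}=0$: the row of $C_N^{(M)}$ is identically zero, so there is nothing to preserve. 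The replacement (either $c_{i,\alpha}(1/r,\dots,1/r)$ or zero) is nonnegative and can only add edges, which does not affect irreducibility.
\end{itemize}
Note that the case $c_{i,\alpha}=0$ forces $c^{(M)}_{i,\alpha}=0$, so it falls under the second case.

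The only point requiring care is the claim $c^{(M)}_{i,\alpha}\le c_{i,\alpha}$, which guarantees that the scaling factor is finite and positive. This follows from the monotone convergence $c^{(M)}_{i,\alpha}\nearrow c_{i,\alpha}$ noted in the construction: $C_N^{(M)}$ is a partial sum of the nonnegative series whose full sum gives the last block-column of $U\hat QD$. Stochasticity of $P^{(N)}$ then gives that the row mass of this full sum is exactly $c_{i,\alpha}$.

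Combining these observations, $\widetilde P^{(N;M)}$ dominates a positive multiple of the pattern of $P^{(N;M)}$. The strictly positive paths between every ordered pair of states of $L_{\le N}$, fixed in Step~2 of the proof of Proposition~\ref{prop:PNM-irreducible}, therefore persist for every $M\ge M_0$. This yields irreducibility of the RA-CM.
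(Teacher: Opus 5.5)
Your proof is correct and follows the paper's route. The paper only remarks that rowwise renormalization does not affect irreducibility, because it preserves the support of $P^{(N;M)}$, so the $M_0$ from Proposition~\ref{prop:PNM-irreducible} carries over unchanged; your case analysis of the piecewise rule, with the bound $c^{(M)}_{i,\alpha}\le c_{i,\alpha}$ giving $s^{(M)}_{i,\alpha}\ge 1$, makes that one-line remark explicit.
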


\begin{theorem}[Convergence of stationary distributions under rowwise renormalization]
\label{the:convergence-rowwise}
Let $P$ be an $M/G/1$ type transition matrix given by \eqref{eqn:MG1-type}, and assume $P$ is irreducible and positive recurrent with stationary distribution $\pi$. Fix a censoring level $N\ge 0$. Let $\pi^{(N)}$ and $\widetilde\pi^{(N:M)}$ be the unique stationary distributions of the censored Markov chain and the RA-CM, respectively, (according to Proposition~\ref{prop:PNM-irreducible}). Then, we have
\[
\bigl\|\,\widetilde\pi^{(N:M)} - \pi^{(N)}\,\bigr\|_1 \;\xrightarrow[M\to\infty]{}\; 0.
\]

Moreover, writing
\[
\Delta^{(M)} \;:=\; \widetilde P^{(N;M)} - P^{(N)}
\;=\; \bigl(0,\dots,0,\ \widetilde C_N^{(M)} - C_N^{(\infty)}\bigr),
\quad
C_N^{(M)} := \bigl[U(\hat Q^{(M)})D\bigr]_{:,\,N},\ \
C_N^{(\infty)} := \bigl[U\hat Q D\bigr]_{:,\,N},
\]
the classical perturbation bound yields
\begin{equation}
\label{eq:C1}
\bigl\|\,\widetilde\pi^{(N:M)} - \pi^{(N)}\,\bigr\|_1
\;\le\;
\bigl\|\,\Delta^{(M)}\,\bigr\|_{1\to 1}\,\bigl\|\,Z_N\,\bigr\|_{1\to 1},
\qquad
Z_N := \bigl(I - P^{(N)} + \mathbf 1 (\pi^{(N)})^\top\bigr)^{-1}.
\end{equation}
Since $\Delta^{(M)}$ changes only the last block–column; consequently
\begin{align}
\bigl\|\,\Delta^{(M)}\,\bigr\|_{\infty\to\infty}
&= \bigl\|\,\widetilde C_N^{(M)} - C_N^{(\infty)}\,\bigr\|_{\infty}
\ \le\
\max_{i,\alpha}\bigl|\,c_{i,\alpha} - c^{(M)}_{i,\alpha}\,\bigr|
\;+\;
\max_{s,p}\bigl(1 - r^{(M)}_{s,p}\bigr),
\label{eq:C2}\\[2mm]
\bigl\|\,\Delta^{(M)}\,\bigr\|_{1\to 1}
&= \bigl\|\,\widetilde C_N^{(M)} - C_N^{(\infty)}\,\bigr\|_{1}
\ \le\
n\,\bigl\|\,\Delta^{(M)}\,\bigr\|_{\infty\to\infty},
\qquad n=(N+1)r,
\label{eq:C3}
\end{align}
where
\[
c_{i,\alpha} = 1-\!\sum_{j=0}^{N}\sum_{\beta=1}^r [T_{N,i,j}]_{\alpha\beta},
\quad
c^{(M)}_{i,\alpha}=\sum_{\beta=1}^r [C_N^{(M)}]_{(i,\alpha),\beta},
\quad
r^{(M)}_{s,p}=\sum_{d=1}^r \bigl[(\hat Q^{(M)}D)_{s,N}\bigr]_{p,d}.
\]
Combining \eqref{eq:C1}–\eqref{eq:C3} gives the fully computable estimate
\[
\bigl\|\,\widetilde\pi^{(N:M)} - \pi^{(N)}\,\bigr\|_1
\ \le\
\bigl\|\,Z_N\,\bigr\|_{1\to 1}\; n\!\left(
\max_{i,\alpha}\bigl|\,c_{i,\alpha} - c^{(M)}_{i,\alpha}\,\bigr|
\;+\;
\max_{s,p}\bigl(1 - r^{(M)}_{s,p}\bigr)\right).
\]
\end{theorem}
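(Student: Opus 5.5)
Fix $N$ and work on the finite state space $L_{\le N}$ with $n=(N+1)r$ states. By Proposition~\ref{prop:PNM-irreducible} and its corollary, $P^{(N)}$ is irreducible, and so is $\widetilde P^{(N;M)}$ for all $M\ge M_0$. Both are stochastic, so each has a unique stationary distribution. The plan is to establish the perturbation bound \eqref{eq:C1}, control $\|\Delta^{(M)}\|$ through \eqref{eq:C2}--\eqref{eq:C3} using Theorem~\ref{thm:truncation-error}, and then deduce convergence from the fact that both right-hand terms in \eqref{eq:C2} tend to zero.

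\textbf{Step 1: the perturbation identity.} Write $\widetilde\pi=\widetilde\pi^{(N:M)}$ and $\pi^{(N)}$ as row vectors. Since $P^{(N)}$ is an irreducible finite stochastic matrix, $Z_N=(I-P^{(N)}+\mathbf 1\pi^{(N)})^{-1}$ exists; this is the standard fundamental matrix. From $\widetilde\pi=\widetilde\pi\,\widetilde P^{(N;M)}$ and $\pi^{(N)}=\pi^{(N)}P^{(N)}$ we get
\[
(\widetilde\pi-\pi^{(N)})(I-P^{(N)})=\widetilde\pi\,\Delta^{(M)}.
\]
The difference $\widetilde\pi-\pi^{(N)}$ has total mass zero, so $(\widetilde\pi-\pi^{(N)})\mathbf 1\pi^{(N)}=0$. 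Adding this term to the left side and multiplying on the right by $Z_N$ gives
\[
\widetilde\pi-\pi^{(N)}=\widetilde\pi\,\Delta^{(M)}Z_N.
\]
Taking $\ell_1$ norms of row vectors, with the induced operator norm acting on row vectors, and using $\|\widetilde\pi\|_1=1$, yields \eqref{eq:C1}. I would state explicitly which norm convention is meant, namely the maximum absolute column sum for right multiplication of row vectors.

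\textbf{Step 2: bounding $\Delta^{(M)}$.} Only the last block-column is nonzero, so the norm of $\Delta^{(M)}$ reduces to that of $\widetilde C^{(M)}_N-C^{(\infty)}_N$. Split this difference as
\[
\widetilde C^{(M)}_N-C^{(\infty)}_N=(\widetilde C^{(M)}_N-C^{(M)}_N)-(C^{(\infty)}_N-C^{(M)}_N).
\]
\begin{itemize}
\item For the first piece, row $(i,\alpha)$ is nonnegative, either $(s^{(M)}_{i,\alpha}-1)$ times a nonnegative row or the uniform filler. In either case its row sum is $c_{i,\alpha}-c^{(M)}_{i,\alpha}$.
\item For the second piece, row $(i,\alpha)$ is the corresponding row of $U E^{(M)}$. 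Theorem~\ref{thm:truncation-error}(b,c) bounds its row sum by $\max_{s,p}(1-r^{(M)}_{s,p})$; summing over $\beta$ rather than bounding entrywise gives the same estimate.
\end{itemize}
The triangle inequality on row sums then gives \eqref{eq:C2}. Inequality \eqref{eq:C3} follows because for an $n\times n$ matrix the maximum column sum is at most $n$ times the maximum row sum. Combining gives the computable estimate.

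\textbf{Step 3: convergence.} As $M\to\infty$ we have $c^{(M)}_{i,\alpha}\nearrow c_{i,\alpha}$. Indeed, $C^{(M)}_N\nearrow C^{(\infty)}_N$ by monotone convergence, and the row sums of $C^{(\infty)}_N$ equal $c_{i,\alpha}$ because $P^{(N)}$ is stochastic. This holds since $P$ is recurrent, so the complement is left almost surely and the return probability $\sum_{m}\sum_d[G^{(m)}_s]_{p,d}$ equals $1$. The latter also gives $r^{(M)}_{s,p}\to1$ for each $(s,p)$, and there are finitely many $(i,\alpha)$.

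\textbf{Main obstacle.} The term $\max_{s,p}(1-r^{(M)}_{s,p})$ is a supremum over infinitely many source states $s$, and this convergence need not be uniform in $s$. I would avoid this by not passing to the uniform bound in the convergence argument. Instead I would use the bound from part (b), $\sum_{s,p}U_{i,s}(\alpha,p)(1-r^{(M)}_{s,p})$, which tends to $0$ by dominated convergence because $U$ has summable rows and each term tends to $0$. This proves $\|\widetilde\pi^{(N:M)}-\pi^{(N)}\|_1\to0$ regardless of uniformity, and the displayed computable bound is retained as a valid but possibly non-vanishing estimate.
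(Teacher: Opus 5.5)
\paragraph{Comparison with the paper.}
Your Steps 1 and 2 have the same outline as the paper's very terse proof: the perturbation identity through $Z_N$, then the last-block-column bound from the renormalization formula and Theorem~\ref{thm:truncation-error}. You also supply a fact the paper only asserts, namely that the row sums of $C_N^{(\infty)}$ equal $c_{i,\alpha}$ because recurrence makes $P^{(N)}$ stochastic.

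The real difference is Step 3, and there your route is the one that works. The paper passes from $r^{(M)}_{s,p}\nearrow 1$ to $\|\Delta^{(M)}\|_{1\to1}\to0$ through \eqref{eq:C2}. Your worry about uniformity actually understates the problem. By the vanishing part of Theorem~\ref{thm:QmD-censored-clean}, $(Q^mD)_{s,N}=0$ for $s>m$. Hence $r^{(M)}_{s,p}=0$ for every $s>M$, and $\max_{s,p}(1-r^{(M)}_{s,p})=1$ for all $M$. So the second term in \eqref{eq:C2} never decreases. The displayed ``fully computable estimate'' never drops below $n\|Z_N\|_{1\to1}$, and convergence cannot be extracted from it. Your argument avoids this. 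You keep the non-uniform bound of part (b) and apply dominated convergence row by row; equivalently, $C_N^{(M)}\nearrow C_N^{(\infty)}$ entrywise on a matrix with finitely many entries. Together with $c^{(M)}_{i,\alpha}\to c_{i,\alpha}$, this gives $\Delta^{(M)}\to0$ and therefore the convergence claim.

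\paragraph{A correction in Step 1.}
The $\ell_1$ operator norm of $x\mapsto xA$ on row vectors is the maximum absolute \emph{row} sum $\max_i\sum_j|a_{ij}|$, not the maximum column sum. For a counterexample, take $x$ the first unit row vector and $A$ whose only nonzero row is the first, filled with ones: then $\|xA\|_1=n$ while every column sum of $A$ is $1$.

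Consequently, $\widetilde\pi-\pi^{(N)}=\widetilde\pi\,\Delta^{(M)}Z_N$ yields \eqref{eq:C1} with row-sum norms. With column-sum norms, as you propose, \eqref{eq:C1} is not justified. Under the correct convention, \eqref{eq:C2} already bounds the norm of $\Delta^{(M)}$ appearing in \eqref{eq:C1}, so \eqref{eq:C3} is superfluous. The final displayed estimate stays true: the factor $n$ is either slack, or it converts $\|Z_N\|$ from the row-sum to the column-sum norm via $\max_i\sum_j|z_{ij}|\le n\max_j\sum_i|z_{ij}|$. None of this affects your convergence argument, since on the finite space any norm of $\Delta^{(M)}$ tends to zero once its entries do.
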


\begin{proof}
By Proposition~\ref{prop:PNM-irreducible}, $P^{(N;M)}$ is irreducible for all $M\ge M_0$; rowwise renormalization preserves off–diagonal support, so $\widetilde P^{(N;M)}$ is irreducible for $M\ge M_0$.
The perturbation identity \eqref{eq:C1} is standard for finite irreducible chains.
The bounds \eqref{eq:C2}–\eqref{eq:C3} follow from the rowwise renormalization formula and the fact that only the last block–column is changed (cf.\ the a–posteriori bounds derived earlier).
Since $\hat Q^{(M)}\nearrow\hat Q$, we have $c^{(M)}_{i,\alpha}\nearrow c_{i,\alpha}$ and $r^{(M)}_{s,p}\nearrow 1$, hence $\|\Delta^{(M)}\|_{1\to1}\to 0$ and the claim follows from \eqref{eq:C1}.
\end{proof}

\begin{corollary}[Convergence to the original stationary distribution $\pi$]
\label{cor:to-pi}
Under the hypotheses of Theorem~\ref{the:convergence-rowwise}, we have
\[
\pi^{(N)}(x) \;=\; \frac{\pi(x)}{\pi(L_{\le N})}\quad\text{for }x\in L_{\le N},
\]
and, embedding $\pi^{(N)}$ into the full space by zero outside $L_{\le N}$,
\[
\bigl\|\,\pi^{(N)} - \pi\,\bigr\|_1 \;=\; 2\bigl(1-\pi(L_{\le N})\bigr)\ \xrightarrow[N\to\infty]{}\ 0.
\]
Consequently, for every $\varepsilon>0$ there exist $N$ and $M$ such that
\[
\bigl\|\,\widetilde\pi^{(N:M)} - \pi\,\bigr\|_1
\ \le\
\underbrace{\bigl\|\,\widetilde\pi^{(N:M)} - \pi^{(N)}\,\bigr\|_1}_{\text{fixed-$N$ truncation error}}
\;+\;
\underbrace{\bigl\|\,\pi^{(N)} - \pi\,\bigr\|_1}_{\text{censoring error}}
\ <\ \varepsilon.
\]
In particular, choosing $N$ so that $2(1-\pi(L_{\le N}))<\varepsilon/2$ and then $M$ so that the computable bound in Theorem~\ref{the:convergence-rowwise} is $<\varepsilon/2$ yields the claim.
\end{corollary}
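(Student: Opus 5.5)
The plan has three parts: the formula for $\pi^{(N)}$, the exact value of the censoring error, and the combination of this with Theorem~\ref{the:convergence-rowwise} in the stated order of choices.

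First, the formula $\pi^{(N)}(x)=\pi(x)/\pi(L_{\le N})$ for $x\in L_{\le N}$ is exactly Lemma~\ref{lem:censored}-(ii), applied with $E=L_{\le N}$. Its hypotheses hold because $P$ is irreducible and positive recurrent, so $\pi$ is unique and strictly positive. In particular $\pi(L_{\le N})>0$, and the normalization is well defined.

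Second, I compute the censoring error exactly, after embedding $\pi^{(N)}$ by zero outside $L_{\le N}$. Write $a:=\pi(L_{\le N})\in(0,1]$. On $L_{\le N}$ we have $|\pi^{(N)}(x)-\pi(x)|=\pi(x)(1/a-1)$, and summing over $L_{\le N}$ gives $a(1/a-1)=1-a$. Off $L_{\le N}$ the contribution is $\sum_{x\notin L_{\le N}}\pi(x)=1-a$. The total is therefore $2(1-a)$. Since $\pi$ is a probability measure and $L_{\le N}\uparrow S$, countable additivity gives $a\to1$, so this error tends to $0$ as $N\to\infty$.

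Third, the triangle inequality gives the displayed splitting, again viewing $\widetilde\pi^{(N:M)}$ as embedded by zero. Given $\varepsilon>0$, I pick $N$ with $2(1-\pi(L_{\le N}))<\varepsilon/2$. With this $N$ fixed, Theorem~\ref{the:convergence-rowwise} gives $\|\widetilde\pi^{(N:M)}-\pi^{(N)}\|_1\to0$ as $M\to\infty$. For the computable version I use the explicit bound of that theorem: $\|Z_N\|_{1\to1}$ is a finite constant depending only on $N$, while $c^{(M)}_{i,\alpha}\nearrow c_{i,\alpha}$ over finitely many rows and $r^{(M)}_{s,p}\nearrow1$. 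So some $M$ makes that bound smaller than $\varepsilon/2$, which is admissible provided $M\ge M_0(N)$ from Proposition~\ref{prop:PNM-irreducible}.

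The main obstacle is the claim $\max_{s,p}(1-r^{(M)}_{s,p})\to0$ in the computable bound. The supremum runs over infinitely many source levels $s$, so pointwise monotone convergence alone does not give uniformity. My first attempt is to replace that maximum by the finitely supported, $U$-weighted quantity of Theorem~\ref{thm:truncation-error}(b), namely $\sum_{s,p}U_{i,s}(\alpha,p)(1-r^{(M)}_{s,p})$. This tends to $0$ by dominated convergence, since the weights are summable with $\sum U\le1$ and there are finitely many origins $(i,\alpha)$.

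Using this weighted quantity, however, also needs $r^{(M)}_{s,p}\nearrow1$. That requires the process started in $L_{\le N}^c$ to return to $L_{\le N}$ almost surely, which holds by recurrence of $P$. Alternatively, I can bypass the computable bound altogether and rely only on the qualitative convergence statement of Theorem~\ref{the:convergence-rowwise}, which already suffices for the existence of $N$ and $M$.
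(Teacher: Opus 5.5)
Your proof is correct and follows the paper's own argument: Lemma~\ref{lem:censored}-(ii) gives the formula, $\pi(L_{\le N})\uparrow 1$ controls the censoring error (your explicit computation of $2(1-a)$ spells out what the paper only asserts), and the triangle inequality is combined with Theorem~\ref{the:convergence-rowwise}. Your suspicion about $\max_{s,p}(1-r^{(M)}_{s,p})$ is justified and is sharper than you state: since $(Q^mD)_{s,N}=0$ for $s>m$ (Theorem~\ref{thm:QmD-censored-clean}), $r^{(M)}_{s,p}=0$ for every $s>M$, so that maximum equals $1$ for all $M$ and the literal ``computable bound'' stays bounded away from $0$; your $U$-weighted quantity, by contrast, equals exactly $c_{i,\alpha}-c^{(M)}_{i,\alpha}$, which gives $\|\widetilde C_N^{(M)}-C_N^{(\infty)}\|_\infty\le 2\max_{i,\alpha}(c_{i,\alpha}-c^{(M)}_{i,\alpha})$, a maximum over finitely many rows that does tend to $0$ by recurrence, and this (or your qualitative fallback) is what actually makes the final choice of $M$ legitimate.
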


\begin{proof}
The expression for $\pi^{(N)}$ is the standard property of censoring (trace chains): the stationary distribution on the censored set is proportional to the restriction of $\pi$.
Since the sets $L_{\le N}$ increase to the whole space, $\pi(L_{\le N})\uparrow 1$, whence the displayed $\ell_1$–convergence.
The final inequality is the triangle inequality together with Theorem~\ref{the:convergence-rowwise}.
\end{proof}

\begin{remark} \label{rem:7.4}
It is worthwhile to highlight the key ideas in the proposed method and emphasize the key property required by our method.
To approximate the stationary distribution of an infinite-state Markov chain, one of the standard methods is to use the stationary distribution of a finite-state Markov chain, for example via various augmented Markov chains (Markov chains obtained by augmenting the north-west corner (sub-stochastic) of the original transition matrix into stochastic).  Among all augmentations with the same truncation size, the censored process is a method with the minimal approximation error in $\ell_1$ norm.  In this section, we proposed an approach to approximate the (finite-state) censored matrix of a Markov chain of $M/G/1$ type based on Theorem~\ref{thm:QmD-censored-clean}, by renormalizing the approximation $P^{(N;M)}$ (a sub-stochastic matrix) of the  censored matrix $P^{(N)}$ to a stochastic matrix \(\widetilde P^{(N;M)}\), referred to as the RA-CM, which has the minimal approximation error in an asymptotic sense (as the number $M$ of terms in the summation tends to infinity).  The skip-free property in transitions for Markov chains of $M/G/1$ type is the key for Theorem~\ref{thm:QmD-censored-clean}. Since Markov chains of $GI/M/1$ type also possess such a skip-free transition property, it is expected that our method is also valid for Markov chains of $GI/M/1$ type.
\end{remark}

\begin{remark}
To conclude this section, we provide a brief discussion on the relationship between the GTH algorithm, our proposed method, and the algorithm proposed in Ramaswami~\cite{Ramaswami:1988}, which is a numerically stable recursive formula for computing the stationary distribution of Markov chains of $M/G/1$ type. 

Mathematically, Ramaswami's algorithm and the GTH algorithm are related, both equivalent to 
\[
    \pi_j = \sum_{i<j} \pi_i R_{i,j}.
\]
It follows from our early discussions, the above recursion is equivalent to the back substitution of the GTH algorithm. To see the equivalence to Ramaswami's algorithm (recursion in equation (6) of \cite{Ramaswami:1988}), we express $R_{i,j}$ in terms of the $G$ matrix by using the relationship between the $R$-measure (or matrices $R_{i,j}$) and the $G$-measure (or matrix $G$), and then use the skip-free property for Markov chains of $M/G/1$ type. 

Ramaswami's algorithm (or the GTH algorithm) is different from augmentation methods. Therefore, it is different from our proposed method, which is a specific augmentation. 
Our proposed approach (or augmentation method) is to formulate a finite transition matrix based on the north-west corner of the original infinite transition matrix, by using the explicit expression of the censored matrix, or $U\hat{Q}D$ for Markov chain of $M/G/1$ type, proved in Theorem~\ref{thm:QmD-censored-clean}. Our approach can be related to Ramaswami's algorithm by noticing that the $R$-measure and the $G$-measure can be recovered from $U\hat{Q}$ and $\hat{Q}D$, respectively.
\end{remark}

\section{Last column augmentation} \label{sec:LBCA}

In this section, we continue our discussion on the Markov chain of $M/G/1$ type defined by \eqref{eqn:MG1-type}. We first introduce last block-column augmentations (LBCA), which are stochastic matrices defined by
 \[
{}_{(N)}\widetilde P \;=\;
\begin{bmatrix}
B_0 & B_1 & B_2 &  \cdots & B_{N-1} & B_N + \widetilde{B}_{> N}   \\
A_{-1} & A_0 & A_1 & \cdots & A_{N-2} &A_{N-1} + \widetilde{A}_{> (N-1)}  \\
0 & A_{-1} & A_0 & \cdots & A_{N-3} &  A_{N-2} + \widetilde{A}_{> (N-2)} \\
\vdots & \ddots & \ddots & \ddots & \ddots & \vdots \\
0 & 0 & 0&  \cdots & A_{-1} & A_0 + \widetilde{A}_{> 0}
\end{bmatrix},
\]
where matrices $\widetilde{B}_{> N}$ and $\widetilde{A}_{> i}$ are arbitrary nonnegative matrices such that ${}_{(N)}\widetilde P$ is stochastic. It is clear that the RA-CM $\widetilde{P}^{(N:M)}$ is a special case of an LBCA. The following, referred to as the natural-LBCA and denoted by ${}_{(N)}P_0$, is another special case:
 \[
{}_{(N)} P_0 \;=\;
\begin{bmatrix}
B_0 & B_1 & B_2 &  \cdots & B_{N-1} & B_N + B_{> N}   \\
A_{-1} & A_0 & A_1 & \cdots & A_{N-2} &A_{N-1} + A_{> (N-1)}  \\
0 & A_{-1} & A_0 & \cdots & A_{N-3} &  A_{N-2} + A_{> (N-2)} \\
\vdots & \ddots & \ddots & \ddots & \ddots & \vdots \\
0 & 0 & 0&  \cdots & A_{-1} & A_0 + A_{> 0}
\end{bmatrix},
\]
where
\begin{align*}
  B_{> N} & = \sum_{k=N+1}^{\infty} B_k,  \\
  A_{>i} & = \sum_{k=i+1}^{\infty} A_k, \quad i=0, 1, \ldots, N-1.
\end{align*}

\begin{remark}
The implementation of the natural-LBCA is straightforward. However, in general, there is no guarantee that the stationary distribution of a LBCA is an approximation to that for the original Markov chain (see, for example, \cite{Gib:1987a} and \cite{Masuyama:2019}). If we can prove that the natural-LBCA does provide an approximation to the stationary distribution of the original chain, then its approximation error is often very small compared with other augmentations. For example,
the natural-LBCA gives the same minimal approximation error as the censored Markov chain when the original $P$ is block-monotone (see \cite{Li-Zhao:2000}).
\end{remark}

\begin{remark}
Block-augmentations (not necessarily confined to LBCA) have been studied in the literature as approximations to  block-upper-Hessenberg Markov chains, of which a Markov chain of $M/G/1$ type is a special case, such as \cite{Masuyama:2019} and references therein. It is possible, based on literature results, to get detailed results applicable to the Markov chain of $M/G/1$ type. Instead of doing so, in the following we provide a new treatment, which extends the analysis in the scalar case using ``taboo'' to the block case.
\end{remark}

In the remainder of this section, we show that the stationary distribution of the natural-LBCA for the Markov chain of $M/G/1$ type does indeed converge to the stationary distribution of the original chain. It is then interesting to compare the approximation error between the proposed RA-CM and the natural-LBCA, which will be discussed in the next section. Throughout the remainder of this section, we assume that $P$, given in \eqref{eqn:MG1-type}, is positive recurrent, and ${}_{(N)}\widetilde P$ is irreducible.
Under this condition, let ${}_{(N)}\widetilde\pi=({}_{(N)}\widetilde\pi_0,\dots,{}_{(N)}\widetilde\pi_N)$ be the stationary probability vector of ${}_{(N)}\widetilde P$, and let $\pi=(\pi_0, \pi_1, \ldots)$ be the stationary probability vector of the original $P$, both partitioned according to the level. For convenience of comparison, we set $\widetilde\pi_k =0$ for $k > N$.
%Our purpose is to establish the convergence of ${}_{(N)}\widetilde\pi$ to $\pi$.

Our approach is an extension of the literature method, used for the scalar case, for example see \cite{Seneta:1981}.
Denote by $\ell_{(i,\alpha)\to(j,\beta)}^k$ the block (matrix) of the taboo probabilities that the chain is in state $(j,\beta)$ at time $k$ without revisiting the level $i$ before time $k$ starting from state $(i,\alpha)$, or
\[
    \ell_{(i,\alpha)\to(j,\beta)}^k=\Pr\big((X_k,J_k)=(j,\beta),\; (X_t,J_t)\notin L_i \text{ for } 1\le t\le k-1 \,\big|\, (X_0,J_0)=(i,\alpha)\big),
\]
and the generating function for taboo probabilities is given by
%Fix a start level $i$ and define the $m\times m$ \emph{block taboo kernel} $\mathsf L_{i\to j}$ whose $(\alpha,\beta)$ entry is
\[
\big[\mathsf L_{i\to j}\big]_{\alpha\beta}(z)
\;=\;
\sum_{k\ge0}\ell_{(i,\alpha)\to(j,\beta)}^kz^k.
\]
Let $\mathsf L_{i\to j}(z)$ be the taboo matrix, whose $(\alpha,\beta)$-th entry is  $[\mathsf L_{i\to j}\big]_{\alpha\beta}(z)$.
In the same fashion, we define the taboo matrix  ${}_{(N)}\mathsf L_{i\to j}(z)$ for the LBCA ${}_{(N)}\widetilde P$.

In the following, we provide properties for taboo (first--entrance) probabilities, required for the main convergence theorem.

\begin{lemma}[Structure of taboo--success paths for $j<i$]\label{lem:structure}
Fix $j<i$. Consider the taboo event ``hit $L_j$ without revisiting $L_i$.'' Under the assumption that $P$ is a Markov chain of $M/G/1$ type, the following hold:
\begin{enumerate}[label=(\roman*)]
\item The \emph{first} step of any taboo--success path must be $L_i\to L_{i-1}$ (one--step down).
\item After that first step, the path remains entirely within $\bigcup_{\ell=0}^{i-1} L_\ell$ until it hits $L_j$; in particular, it makes \emph{no} transition to any level $\ge i$.
\end{enumerate}
\end{lemma}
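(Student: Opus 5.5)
The plan is a direct sample-path argument built on the skip-free-to-the-left structure of \eqref{eqn:MG1-type}: in one step the level can drop by at most one, since $A_k=0$ for $k<-1$ and the boundary row only moves from level $0$ upward. I would fix $j<i$ and a taboo-success path $(X_0,J_0)\in L_i$, $(X_1,J_1),\dots,(X_k,J_k)$, with $X_k=j$ and $X_t\neq i$ for $1\le t\le k-1$. I would take $k$ to be the first hitting time of $L_j$; this is the natural reading of ``hit $L_j$'' as a first-entrance event. The whole proof rests on one discrete intermediate-value observation: since each step satisfies $X_{t+1}\ge X_t-1$, a path that is at or above level $i$ at some time and at level $j<i$ at a later time must pass through every intermediate level, and in particular through level $i$ itself, at some time in between.

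For (i), consider the first step $X_1=i+k_1$ with $k_1\ge -1$. If $k_1\ge 0$, then $X_1\ge i$. The case $k_1=0$ is excluded immediately, because it is a revisit to $L_i$ at time $1$. If $k_1\ge1$, the path starts above $i$ and must end at $j<i$, so by the observation above it visits level $i$ at some time $t$ with $1\le t<k$. That violates the taboo. Hence $k_1=-1$, meaning the first transition is governed by $A_{-1}$ (or, when $i=1$, by the boundary-adjacent $A_{-1}$ into $L_0$), and it goes from $L_i$ to $L_{i-1}$.

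For (ii), suppose that after time $1$ some $X_t\ge i$ with $1<t<k$. If $X_t=i$, the taboo is violated directly. If $X_t>i$, apply the same observation to the segment from $t$ to $k$: the path reaches level $i$ strictly before time $k$, which is again a contradiction. Therefore $X_t\le i-1$ for all $1\le t\le k$, so every transition after the first occurs within $\bigcup_{\ell=0}^{i-1}L_\ell$. The only care needed in this step is to treat the skip-free inequality $X_{t+1}\ge X_t-1$ as holding at every level, including transitions out of level $0$ through $B_k$, where it is trivial.

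I expect no step to be a genuine obstacle. The one subtlety is making the intermediate-value step rigorous, which I would do by an explicit induction: let $\tau=\min\{u\ge t: X_u\le i\}$; since $X_{\tau-1}>i$ and $X_\tau\ge X_{\tau-1}-1\ge i$, we get $X_\tau=i$, with $\tau<k$ because $X_k=j<i$. I would state this as a one-line auxiliary claim and use it in both (i) and (ii).
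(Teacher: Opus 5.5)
Your proposal is correct and follows the paper's sample-path argument. The first step cannot stay in $L_i$ or move above it, and any later move to a level $\ge i$ would force a revisit to $L_i$ before $L_j$ is reached, because the chain is skip-free to the left. Your explicit first-passage time $\tau$ turns the paper's informal ``must cross $L_i$'' step into a rigorous one.
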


\begin{proof}
(i) The first step cannot be $L_i\to L_i$ (that would revisit $L_i$ at $t=1$) and cannot be $L_i\to L_{i+k}$ with $k\ge1$:
to later reach a level $<i$ from a level $>i$ in a skip--free--down chain, the path must cross $L_i$, which violates the taboo.
Hence the only admissible first step is $L_i\to L_{i-1}$.

(ii) Suppose after reaching $L_{i-1}$ the path ever moved to a level $\ge i$.
To later hit $L_j$ with $j<i$, it would have to come down from $\ge i$ to $<i$, and thus cross $L_i$, again violating the taboo.
Therefore all subsequent moves stay within levels $\le i-1$ until $L_j$ is hit.
\end{proof}

\begin{proposition}[Equality (hence monotonicity) of taboo mass for $j<i$]\label{thm:MG1-mono}
Consider the $M/G/1$ type Markov chain $P$, given in \eqref{eqn:MG1-type}, with state space $S$ and its LBCA ${}_{(N)}\widetilde P$ with state space $S_N$.
%Let \(\,{}_{(N)}\!L_{i\to j}(1)\) and \(L_{i\to j}(1)\) denote the block first–entrance taboo matrices
%(with taboo on revisiting level \(i\) before time \(k\)) for \({}_{(N)}\widetilde P\) and \(P\),
%respectively, evaluated at \(z=1\).
Fix $j<i\leq N$,
\[
    {}_{(N)}\mathsf L_{i\to j}(1) \;=\; \mathsf L_{i\to j}(1).
\]
Consequently, the sequence is (trivially) nondecreasing in $N$ and converges entrywise to $\mathsf L_{i\to j}(1)$.
\end{proposition}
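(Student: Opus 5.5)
Following the structure of Lemma~\ref{lem:structure}, I will write each taboo probability as a sum over sample paths and show that the admissible paths, together with their one-step probabilities, are identical for $P$ and for ${}_{(N)}\widetilde P$. Evaluating at $z=1$ and summing over $k$ then gives equality entrywise.

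First, expand $\ell^k_{(i,\alpha)\to(j,\beta)}$ as the sum over all paths $x_0=(i,\alpha),x_1,\dots,x_k=(j,\beta)$ with $x_t\notin L_i$ for $1\le t\le k-1$. The weight of each path is the product of the corresponding one-step entries. The same expansion holds for ${}_{(N)}\widetilde P$, with paths restricted to $S_N$ and weights taken from ${}_{(N)}\widetilde P$. Summing over $k\ge 0$ gives $[\mathsf L_{i\to j}(1)]_{\alpha\beta}$ and $[{}_{(N)}\mathsf L_{i\to j}(1)]_{\alpha\beta}$ as sums of nonnegative path weights, so no convergence issues arise.

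Second, apply Lemma~\ref{lem:structure} to $P$. Every path with positive weight has first step $L_i\to L_{i-1}$, which uses the block $A_{-1}$ (row $i\ge1$ of \eqref{eqn:MG1-type}). Every subsequent step is between levels in $\{0,\dots,i-1\}$ until $L_j$ is hit. The same lemma applies verbatim to ${}_{(N)}\widetilde P$, because the LBCA is still skip-free downward: it has the same subdiagonal blocks, and the augmentation only adds mass to block-column $N$. It therefore yields the same structural conclusion. Since $i\le N$, all intermediate levels lie in $\{0,\dots,i-1\}\subseteq\{0,\dots,N-1\}$. Hence both expansions range over exactly the same path set.

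Third, compare the weights. Each transition used is either the first step from level $i$ to $i-1$ or a step from level $a\le i-1$ to level $b\le i-1\le N-1$. In all these cases the target block-column is strictly less than $N$. Here ${}_{(N)}\widetilde P$ coincides with $T_N$, which coincides with $P$. The diagonal block $A_0$ for level $i$ itself is never used. So every path weight agrees, and summing gives ${}_{(N)}\mathsf L_{i\to j}(1)=\mathsf L_{i\to j}(1)$. Monotonicity and convergence in $N$ are then trivial, since the sequence is constant for $N\ge i$.

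The main obstacle is the edge case where the first step lands in block-column $N$, which happens only if $i-1=N$. This is excluded because $i\le N$. I also need to state clearly that Lemma~\ref{lem:structure} transfers to the truncated chain. The augmented column only modifies transitions into level $N\ge i$, and such transitions are forbidden on taboo-success paths anyway, since they would require re-crossing $L_i$. I would verify this explicitly rather than cite the lemma for $P$ alone.
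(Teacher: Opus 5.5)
Your proposal is correct and is essentially the paper's argument: by Lemma~\ref{lem:structure}, every positive-weight taboo path starts with an $A_{-1}$ step into $L_{i-1}$ and thereafter stays in levels $\le i-1\le N-1$, where ${}_{(N)}\widetilde P$ and $P$ have identical one-step entries, so the path sums coincide. Your version is slightly more careful than the paper's on two points: the paper says loosely that the two chains agree on all transitions inside $S_N$, even though block-column $N$ differs, and it uses only implicitly that the LBCA is still skip-free downward.
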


\begin{proof}
Fix $N\ge i$. By the definition of the augmentation, ${}_{(N)}\widetilde P$ and $P$ agree on all transitions whose origin and destination lie in $S_N$,
and differ only on transitions from $S_N$ to $S\setminus S_N$ (which are redirected to $L_N$).

By Lemma~\ref{lem:structure}, any taboo--success path (from $L_i$ to $L_j$, $j<i$) has the following properties:
its first step is $L_i\to L_{i-1}$, and thereafter every visited level is $\le i-1\le N-1$.
Hence, every transition along such a path is inside $S_N$, so its one--step probability under ${}_{(N)}\widetilde P$ equals that under $P$.
Therefore the probability of each individual taboo--success sample path is identical in the original and augmented chains.

Since the set of taboo--success paths does not depend on the augmentation (only their outside--$S_N$ complements are altered),
the total taboo probability is the same:
\[
{}_{(N)}\mathsf  L_{i\to j}(1) \;=\; \mathsf L_{i\to j}(1).
\]
This holds entrywise in the block setting as well, because the above argument applies to each phase--resolved path.
\end{proof}

\begin{proposition}[Shift--invariance of taboo mass below level $i$]
%Because the block rows are level--homogeneous for $\ell\ge1$, taboo probabilities toward lower levels are shift--invariant:
Consider Markov chains of $M/G/1$ type. If $i-j=i'-j'>0$ with $i,i',j,j'\ge1$, then $\mathsf L_{i\to j}(1)=\mathsf L_{i'\to j'}(1)$ entrywise, and the same equality holds for ${}_{(N)}\widetilde P$ whenever $N\ge\max\{i,i'\}$.
%The proof is by a bijection that shifts any taboo path by $t=i'-i$ levels; path probabilities are preserved by homogeneity and skip--free down structure.
\end{proposition}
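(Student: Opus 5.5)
The plan is a path bijection by level shifting, built on the structural description in Lemma~\ref{lem:structure}. Set $t=i'-i=j'-j$ and assume without loss of generality $t\ge 0$, by symmetry of the statement. The claim is an entrywise identity between sums of path probabilities. So for each pair of phases $(\alpha,\beta)$ I will write
\[
\bigl[\mathsf L_{i\to j}(1)\bigr]_{\alpha\beta}=\sum_{k\ge1}\ell^k_{(i,\alpha)\to(j,\beta)}
\]
as a sum over phase-resolved sample paths $(i,\alpha)=(x_0,\alpha_0)\to(x_1,\alpha_1)\to\cdots\to(x_k,\alpha_k)=(j,\beta)$. These are the taboo-success paths. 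I interpret the event as the first entrance into $L_j$, so intermediate states avoid $L_i$, and also $L_j$ if we want first hitting. The same identity holds under either reading, since the bijection below respects both taboos.

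\textbf{Step 1: path structure.} Apply Lemma~\ref{lem:structure}. Every such path first steps from $L_i$ to $L_{i-1}$ and afterwards stays in levels $\le i-1$ until it hits $L_j$. It also never goes below level $j$ before time $k$: the chain is skip-free down, so it cannot pass below $L_j$ without visiting $L_j$, and the first visit to $L_j$ ends the path. Hence every intermediate level lies in $\{j,\dots,i-1\}$, and with first-entrance taboo in $\{j+1,\dots,i-1\}$. The endpoints are $i$ and $j\ge1$.

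\textbf{Step 2: the shift map.} Send a path to $(x_u+t,\alpha_u)_u$. Under this map, every origin level of a transition is at least $j\ge 1$. So every transition used lies in the homogeneous block rows of \eqref{eqn:MG1-type}, where the one-step block from level $x$ to level $y$ is $A_{y-x}$; in particular it is never $B_\cdot$. This is exactly where the hypothesis $j,j'\ge1$ is used. The shifted transitions start at levels $\ge j'\ge1$ and are therefore also governed by $A_{y-x}$. Consequently each factor $[A_{x_{u+1}-x_u}]_{\alpha_u\alpha_{u+1}}$ is unchanged, and the path probability is preserved. The map is a bijection between the taboo-success paths for $(i,j)$ and those for $(i',j')$, with inverse the shift by $-t$. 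It preserves the taboo conditions because it translates the forbidden levels $L_i,L_j$ to $L_{i'},L_{j'}$, and the relative level constraints of Step 1 carry over. Summing over paths and over $k$ gives $\mathsf L_{i\to j}(1)=\mathsf L_{i'\to j'}(1)$ entrywise. The sums are nonnegative, so rearrangement is harmless.

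\textbf{Step 3: the augmented chain.} For ${}_{(N)}\widetilde P$ with $N\ge\max\{i,i'\}$, invoke Proposition~\ref{thm:MG1-mono}: ${}_{(N)}\mathsf L_{i\to j}(1)=\mathsf L_{i\to j}(1)$ and likewise for $(i',j')$, since $j'<i'\le N$. Combining this with Step 2 gives the claim. The main obstacle is Step 1, specifically verifying that the paths never touch level $0$, so that no $B_k$ transition appears and level $0$'s special row cannot break homogeneity. This rests on the skip-free-down lower bound on levels together with $j\ge1$. A secondary point is making sure the augmented last column never enters, which holds because all visited levels are $\le i-1<N$.
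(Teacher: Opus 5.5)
Under the \emph{first-entrance} reading, where intermediate states avoid both $L_i$ and $L_j$, your Steps 1--3 are correct. In that reading they are the paper's own proof written out: the paper's one-line argument is the same level-shift bijection on the paths of Lemma~\ref{lem:structure}, plus homogeneity of the block rows above level $0$. Your use of Proposition~\ref{thm:MG1-mono} for ${}_{(N)}\widetilde P$ is also fine.

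The genuine error is the sentence ``the same identity holds under either reading.'' The paper defines $\ell^k_{(i,\alpha)\to(j,\beta)}$ with a taboo on $L_i$ only. That is also the reading Lemma~\ref{lem:ratio} needs, since there $\mathsf L_{i\to j}(1)$ counts all visits to $(j,\beta)$ before the return to $L_i$. In that reading your Step~1 justification, ``the first visit to $L_j$ ends the path,'' is not available. The time-$k$ state can be any visit to $(j,\beta)$, so the path may reach $L_j$, drop below it to $L_0$, jump back up through a $B$-block, and hit $(j,\beta)$ again, all without touching $L_i$. The shift breaks this in two ways. Paths that reach $L_0$ use the boundary row, which the shift does not preserve. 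Paths from $L_{i'}$ that dip below level $t$ have no preimage at all.

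This cannot be patched, because the identity is false in that reading. Take $r=1$ with $A_{-1}=1/2$, $A_0=3/8$, $A_2=1/8$, $B_0=7/8$, $B_2=1/8$, and all other blocks zero; this chain is irreducible with drift $-1/4$. First-step analysis gives $\mathsf L_{2\to1}(1)=4/5$ but $\mathsf L_{3\to2}(1)=20/9$. These equal $\pi_1/\pi_2$ and $\pi_2/\pi_3$, as the ratio identity of Lemma~\ref{lem:ratio} applied to $P$ predicts. The discrepancy comes entirely from excursions below the target level, such as $2\to1\to0\to2$. More generally, for $r=1$ one has $\mathsf L_{i\to j}(1)=\pi_j/\pi_i$, so shift-invariance would force $\pi$ to be geometric from level $1$ on.

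You should drop the ``either reading'' claim and state explicitly that the proposition, and your bijection, concern first-entrance masses. The paper's one-line proof makes the same restriction tacitly, even though its definition of $\ell^k$ is the other one. In the first-entrance reading all intermediate levels lie in $\{j+1,\dots,i-1\}$ and every transition starts at a level $\ge j+1$. So the row-$0$ issue you single out as the main obstacle never arises, and even $j\ge 0$ would suffice.
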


\begin{proof}
This follows by level-homogeneity of the transition blocks above level $0$ and a level-shift bijection on taboo-success paths.
\end{proof}

\begin{proposition}[Monotonicity for first–entrance taboo masses, upward case \(i\le j\)]
\label{lem:mono-up}
Let \(P\) be an irreducible $M/G/1$ type transition matrix on levels
\(\{0,1,2,\dots\}\), given in \eqref{eqn:MG1-type}, and for each \(N\) let \({}_{(N)}\widetilde P\) be an LBCA.
Fix levels \(i\) and \(j\) with \(0\le i\le j\le N-1\),
%Let \(\,{}_{(N)}\!L_{i\to j}(1)\) and \(L_{i\to j}(1)\) denote the block first–entrance taboo matrices
%(with taboo on revisiting level \(i\) before time \(k\)) for \({}_{(N)}\widetilde P\) and \(P\),
%respectively, evaluated at \(z=1\).
then entrywise
\[
    {}_{(N)}\mathsf L_{i\to j}(1)\;\leq\;{}_{(N+1)}\mathsf L_{i\to j}(1)\;\leq\; \mathsf L_{i\to j}(1),
\]
\[
    {}_{(N)}\mathsf L_{i\to j}(1)\uparrow \mathsf L_{i\to j}(1).
\]
%(The same inequalities hold for the scalar case by interpreting blocks as scalars.)
\end{proposition}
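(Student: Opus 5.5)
The plan is to decompose every taboo path according to whether it visits the top level $L_N$. Under both $P$ and ${}_{(N)}\widetilde P$, the paths that stay in $L_{\le N-1}$ carry the same weights, and the augmentation only affects the part of a path after it reaches $L_N$. Fix $0\le i\le j\le N-1$ and a phase pair $(\alpha,\beta)$. Split the taboo paths from $(i,\alpha)$ to $(j,\beta)$ that avoid $L_i$ at intermediate times into two classes:
\[
\mathsf L_{i\to j}(1)=\mathsf L^{<N}_{i\to j}+\mathsf L^{\ge N}_{i\to j},
\qquad
{}_{(N)}\mathsf L_{i\to j}(1)={}_{(N)}\mathsf L^{<N}_{i\to j}+{}_{(N)}\mathsf L^{\ge N}_{i\to j}.
\]
Here the superscript $<N$ collects paths confined to $L_{\le N-1}$ before hitting $L_j$, and $\ge N$ collects the rest. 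Every transition of a path in the first class is inside $S_N$ and does not use the last block column. By the argument of Proposition~\ref{thm:MG1-mono}, the two chains therefore give the same weight, so ${}_{(N)}\mathsf L^{<N}_{i\to j}=\mathsf L^{<N}_{i\to j}$. This term is also nondecreasing in $N$ for the original chain, since enlarging $N$ only admits more paths.

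For the second class I would cut each path at its first entrance into $L_{\ge N}$ and at its last exit down to $L_{N-1}$. The $M/G/1$ skip-free-down property, as in Lemma~\ref{lem:structure}, forces any path above $N-1$ to come back through $L_N$ and then $L_{N-1}$ before reaching $j\le N-1$. Under $P$ this gives a product of three factors:
\begin{itemize}
\item a taboo entrance kernel $E$ from $(i,\alpha)$ into $L_{\ge N}$, avoiding $L_i$;
\item a return kernel into $L_{N-1}$, which by the censoring identity (Lemma~\ref{lem:censored}(iii) together with the invariance in Theorem~\ref{theorem1}) can be computed in $P^{(N)}$ and involves $G_{N,N-1}$-type matrices;
\item a taboo kernel from $L_{N-1}$ to $(j,\beta)$, which is common to both chains.
\end{itemize}
Under ${}_{(N)}\widetilde P$ the same factorisation holds. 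The entrance kernel is replaced by one in which every jump beyond $N$ is redirected into $L_N$. The return kernel is the first-passage kernel from $L_N$ to $L_{N-1}$ of the finite chain restricted to $L_N$, namely $(I-A_0-\widetilde A_{>0})^{-1}A_{-1}$.

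The upper bound ${}_{(N)}\mathsf L\le\mathsf L$ should then follow by comparing these factors. The redirected mass $\widetilde A_{>k}$ and $\widetilde B_{>N}$ has the same row totals as the true overshoot mass, so the LBCA entrance kernel never creates mass that $P$ lacks. For monotonicity in $N$, I would view ${}_{(N+1)}\widetilde P$ as obtained from ${}_{(N)}\widetilde P$ by un-lumping one level. Every path weight counted at truncation $N$ is then dominated by a sum of path weights at truncation $N+1$, because a path of ${}_{(N)}\widetilde P$ through $L_N$ corresponds to a family of paths of ${}_{(N+1)}\widetilde P$ through $L_N\cup L_{N+1}$. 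Convergence ${}_{(N)}\mathsf L_{i\to j}(1)\uparrow\mathsf L_{i\to j}(1)$ would then come from monotone convergence. The class-$<N$ part already increases to $\mathsf L_{i\to j}(1)$, since $\mathsf L^{\ge N}_{i\to j}\le \Pr(\text{reach } L_{\ge N}\text{ before returning to }L_i)\to0$ by recurrence of $P$. Sandwiching ${}_{(N)}\mathsf L^{<N}_{i\to j}\le{}_{(N)}\mathsf L_{i\to j}(1)\le\mathsf L_{i\to j}(1)$ then finishes.

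The main obstacle is the entrywise comparison of the class-$\ge N$ terms when $r>1$. Row masses agree, but an arbitrary LBCA may distribute the redirected mass among the phases of $L_N$ differently from the censored chain $P^{(N)}$. In that case the phase-resolved return kernel need not be dominated entrywise. My first attempt is to show that only row sums of the return kernel matter after multiplication by the common downstream factor. This works exactly in the scalar case and should reduce the block case to a row-sum statement. If that fails, I would restrict the monotonicity claim to the natural-LBCA ${}_{(N)}P_0$, for which the redirected blocks $A_{>k}$ are explicit and the $N\to N+1$ un-lumping coupling can be checked directly.
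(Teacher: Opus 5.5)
\paragraph{Verdict.} The gap you flag cannot be closed: the statement is false for $r\ge2$. Your identity ${}_{(N)}\mathsf L^{<N}_{i\to j}=\mathsf L^{<N}_{i\to j}$ is correct. But for $r\ge 2$ both the upper bound ${}_{(N)}\mathsf L_{i\to j}(1)\le\mathsf L_{i\to j}(1)$ and the monotonicity in $N$ fail, even for the natural-LBCA ${}_{(N)}P_0$. So neither of your fallbacks can work.

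\paragraph{Counterexample.} Take $r=2$ and $0<q<1$. Let $A_{-1}=\bigl(\begin{smallmatrix}1-q&q\\0&1\end{smallmatrix}\bigr)$ and $A_k=0$ for $k\ge0$. Let $B_m=p_mI$ for $m\ge1$, with all $p_m>0$, $\sigma:=\sum_m p_m<1$ and $\sum_m mp_m<\infty$. Let $B_0=\frac{1-\sigma}{2}\bigl(\begin{smallmatrix}1&1\\1&1\end{smallmatrix}\bigr)$. Then $P$ is irreducible and positive recurrent, and ${}_{(N)}P_0$ is irreducible for every $N\ge2$. Moreover, ${}_{(N)}P_0$ differs from $P$ only in sending the jumps $(0,\gamma)\to(m,\gamma)$, $m>N$, to $(N,\gamma)$. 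A path from $(0,1)$ that jumps to level $m$ then drops one level per step, and it is in phase $1$ at level $1$ with probability $(1-q)^{m-1}$. Hence, for $i=0$ and $j=1$,
\begin{align*}
\bigl[{}_{(N)}\mathsf L_{0\to1}(1)\bigr]_{11}-\bigl[\mathsf L_{0\to1}(1)\bigr]_{11}
&=\sum_{m>N}p_m\bigl[(1-q)^{N-1}-(1-q)^{m-1}\bigr]>0,\\
\bigl[{}_{(N)}\mathsf L_{0\to1}(1)\bigr]_{11}-\bigl[{}_{(N+1)}\mathsf L_{0\to1}(1)\bigr]_{11}
&=q(1-q)^{N-1}\sum_{m>N}p_m>0.
\end{align*}

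\paragraph{Why your argument breaks.} This is your obstacle made concrete. The true chain re-enters $L_N$ from above with phase kernel $G^{m-N}$ (here $G=A_{-1}$), while the LBCA uses a different kernel (here $I$). The downstream factor is the expected number of visits to $(j,\beta)$ before $L_i$ starting from $(N,\gamma)$. Here it equals $(1-q)^{N-1}$ for $\gamma=1$ and $0$ for $\gamma=2$, so it depends on the phase. Equal row masses therefore give nothing entrywise. Two further points in your convergence step:
\begin{itemize}
\item Your bound $\mathsf L^{\ge N}_{i\to j}\le\Pr(\text{reach }L_{\ge N}\text{ before }L_i)$ is wrong, because multiple visits are counted. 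Dominated convergence, using $\mathsf L_{i\to j}(1)<\infty$, still gives $\mathsf L^{\ge N}_{i\to j}\to0$.
\item Your final sandwich relies on the false upper bound.
\end{itemize}

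\paragraph{The paper's proof.} It does not close this gap either. Its LC-folding coupling claims domination at each fixed horizon $k$. That fails already for $r=1$, because the folded path reaches level $N$, and hence level $j$, earlier than the original path. In the scalar version of the example, $\ell^{N}$ for $0\to1$ is $\sum_{m\ge N}p_m$ under the LBCA but only $p_N$ under $P$. Its claim that the original chain can ``realize the identical sequence of sub-$L_{\le N}$ moves'' after returning from above $N$ ignores the re-entry phase law, which is exactly what the example exploits.

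\paragraph{What your decomposition does give.}
\begin{itemize}
\item It proves $\liminf_N{}_{(N)}\mathsf L_{i\to j}(1)\ge\mathsf L_{i\to j}(1)$ entrywise.
\item The full statement holds, with equality, when $r=1$ or when the LBCA is the censored matrix $P^{(N)}$. Censoring preserves the visits to $(j,\beta)$ between returns to $L_i$.
\item For a general LBCA, convergence without monotonicity would follow from one extra bound. You would need to bound, uniformly in $N$, the expected number of visits to $(j,\beta)$ before $L_i$ starting from $L_N$ under ${}_{(N)}\widetilde P$. This suffices because the probability of reaching $L_N$ before returning to $L_i$ is the same under ${}_{(N)}\widetilde P$ and $P$, and it tends to $0$.
\end{itemize}
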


\begin{proof}
For a path \((X_t,J_t)_{t\ge0}\) with \(X_t\) the level and \(J_t\) the phase,
write
\[
E_{i\to j}^{(k)}(\alpha,\beta)
:= \big\{(X_0,J_0)=(i,\alpha),\ (X_k,J_k)=(j,\beta),\
 (X_t,J_t)\notin L_i\ \text{for }1\le t\le k-1\big\}.
\]
Then \([\;\mathsf L_{i\to j}(1)\;]_{\alpha\beta}=\sum_{k\ge0}\mathbb P\big(E_{i\to j}^{(k)}(\alpha,\beta)\big)\)
(and analogously for \({}_{(N)}\mathsf L_{i\to j}(1)\) with probabilities under \({}_{(N)}\widetilde P\)).

\smallskip\noindent
\emph{Coupling by LC–folding.}
% \red{(We might be better to explicitly state that the coupling is defined only up to the first time the original chain enters $S_N$ from $S\setminus S_N$; after that, we continue using the same randomness for the common transitions inside $S_N$.)}
Construct, on a common probability space, a path
\((X^{\mathrm{orig}}_t,J^{\mathrm{orig}}_t)\) driven by \(P\) and its LC–folded counterpart
\((X^{(N)}_t,J^{(N)}_t)\) driven by \({}_{(N)}\widetilde P\) as follows:
starting from the same initial state \((i,\alpha)\), whenever the original path makes a one–step
transition within \(L_{\le N}\), the truncated path makes the same one–step move; whenever the
original path would jump from \(L_{\le N}\) to some level \(>N\), the truncated path instead jumps
to level \(N\) with the LC–specified phase–mix (so both chains use the same sub–\(L_{\le N}\) dynamics
and the truncated chain “folds” overflow into \(L_N\)). This is the standard LC coupling. Note that the above  coupling  is defined only up to the first time the original chain enters $S_N$ from $S\setminus S_N$; after that, we continue using the same randomness for the common transitions inside $S_N$.

\smallskip\noindent
\emph{Key observation for \(j\le N-1\).} Under this coupling, for any fixed horizon \(k\),
if the truncated path satisfies \(X^{(N)}_k=j\) while respecting the taboo 
(no revisit to \(L_i\) for \(1\le t\le k-1\)), then the original path satisfies
\(X^{\mathrm{orig}}_k=j\) and the same taboo as well. Indeed:
\begin{itemize}
  \item While both paths remain in \(L_{\le N}\), their positions coincide.
  \item If the original path attempts to jump to \(>N\) at some time \(t\le k\), then
        the truncated path goes to \(N\). Since \(j\le N-1\), the truncated path cannot be at
        level \(j\) at time \(k\) because of a single LC–fold at or after time \(t\), unless it
        later moves down from \(N\) to \(j\). But then, in the original chain, the same sub–\(L_{\le N}\)
        down–moves are available ($M/G/1$ type allows the same down–by–1 block \(A_{-1}\) at every
        level \(\ge1\)), so we can realize the identical sequence of sub–\(L_{\le N}\) moves after
        the time the original chain returns from \(>N\) to \(N\). Thus, whenever the truncated
        chain achieves \(X^{(N)}_k=j\) without revisiting \(L_i\), the original chain also achieves
        \(X^{\mathrm{orig}}_k=j\) without revisiting \(L_i\).
  \item The coupling preserves the “no revisit to \(L_i\)” constraint: up to any time \(k\), if
        the truncated path avoids \(L_i\), then so does the original path (they coincide whenever
        they are in \(L_{\le N}\), and if the original path leaves \(L_{\le N}\) it certainly does not
        visit \(L_i\) during that excursion).
\end{itemize}
Therefore, for every \((\alpha,\beta)\) and every \(k\),
\[
\mathbb P\!\left(E_{i\to j}^{(k)}(\alpha,\beta)\ \text{under }{}_{(N)}\widetilde P\right)
\;\le\;
\mathbb P\!\left(E_{i\to j}^{(k)}(\alpha,\beta)\ \text{under }P\right),
\]
and summing over \(k\) gives the upper bound
\[
    {}_{(N)}\mathsf L_{i\to j}(1)\ \leq\ \mathsf L_{i\to j}(1), \qquad (j\le N-1).
\]

\smallskip\noindent
\emph{Monotonicity in \(N\).} Now couple \({}_{(N)}\widetilde P\) and \({}_{(N+1)}\widetilde P\) similarly:
they agree on all moves within \(L_{\le N}\); whenever a move under \({}_{(N)}\widetilde P\) would fold
to \(N\), the \({}_{(N+1)}\widetilde P\) chain instead follows the true one–step destination if it lies
in \(L_{N+1}\) (otherwise the same fold to \(N\) applies). Because \(j\le N-1\), any fold to level \(N\)
cannot create a “new” first hit of \(j\) that was impossible under the \((N{+}1)\)–model; on the contrary,
\({}_{(N+1)}\widetilde P\) allows strictly more trajectories above level \(N\) before possibly returning
to sub–\(L_{\le N}\) and then hitting \(j\). Thus, for each \(k\),
\[
\mathbb P\!\left(E_{i\to j}^{(k)}(\alpha,\beta)\ \text{under }{}_{(N)}\widetilde P\right)
\;\le\;
\mathbb P\!\left(E_{i\to j}^{(k)}(\alpha,\beta)\ \text{under }{}_{(N+1)}\widetilde P\right),
\]
and summing over \(k\) yields
\[
    {}_{(N)}\mathsf L_{i\to j}(1)\ \leq\ {}_{(N+1)}\mathsf L_{i\to j}(1), \qquad (j\le N-1).
\]

%\smallskip\noindent
%
%If \(j=N\), an LC–fold from \(>N\) \emph{lands at the target level}.
%This can (at fixed horizons \(k\)) inflate the first–entrance mass to \(j=N\) under \({}_{(N)}\widetilde P\)
%relative to the original chain. Hence the clean one–sided bound against \(L_{i\to j}(1)\) needs a separate
%argument, which I still need time to think. I believe that the monotonicity
%\({}_{(N)}\!L_{i\to N}(1)\leq{}_{(N+1)}\!L_{i\to N}(1)\) still holds by the same coupling argument.

In the following, we prove the pointwise convergence for taboo probabilities.
Define, for fixed $i,j,\alpha,\beta$, the indicators
\[
    I_k(\omega):=\mathbf 1\{E_{i\to j}^{(k)}(\alpha,\beta)\}, \qquad
I_k^{(N)}(\omega):=\mathbf 1\{E_{i\to j}^{(k)}(\alpha,\beta)\ \text{under }{}_{(N)}\widetilde P\}.
\]
Equivalently, define the $k$-step taboo probabilities
\[
    \ell^k:=\mathbb P(I_k=1), \qquad \ell^{k,(N)}:=\mathbb P(I_k^{(N)}=1).
\] 
For each fixed $k$, every sample path visits only finitely many levels. Thus for sufficiently large $N$, the LC-folding map $\psi_N$ (which replaces any jump from $L_{\le N}$ to $L_{>N}$ by a jump into $L_N$ and otherwise leaves the path unchanged) leaves the path up to time $k$ unchanged. Consequently $\lim_{N\to\infty} I^{(N)}_k(\omega)=I_k(\omega)$ pointwise.
By the monotone convergence theorem,
\[
\lim_{N\to\infty} \ell^{k,(N)} \;=\; \ell^k.
\]
Therefore,
\[
\lim_{N\to\infty} \big[{}_{(N)}\mathsf L_{i\to j}(1)\big]_{\alpha\beta}
= \sum_{k\ge0}\lim_{N\to\infty}\ell^{k,(N)}
= \sum_{k\ge0}\ell^k
= \big[\mathsf L_{i\to j}(1)\big]_{\alpha\beta}.
\]
\[
{}_{(N)}\mathsf L_{i\to j}(1)\;\uparrow\;\mathsf L_{i\to j}(1)
\qquad(N\to\infty),
\]
entrywise. This completes the proof.
\end{proof}

\begin{lemma}[Block ratio identity via taboo kernels]\label{lem:ratio}
Assume ${}_{(N)}\widetilde P$ is irreducible and positive recurrent on the finite state space $L_{\le N}$, and let ${}_{(N)}\widetilde\pi=({}_{(N)}\widetilde\pi_0,\dots,{}_{(N)}\widetilde\pi_N)$ be its stationary distribution, with level blocks $\widetilde\pi_i\in\mathbb R^{1\times m}$, which
satisfies, componentwise,
\[
{}_{(N)}\widetilde\pi_{j,\beta}
\;=\;
\sum_{\alpha=1}^m {}_{(N)}\widetilde\pi_{i,\alpha}\;
\big[\,{}_{(N)}\mathsf L_{i\to j}(1)\,\big]_{\alpha\beta},
\]
or, in level–block row–vector form,
\[
{}_{(N)}\widetilde\pi_{j} \;=\; {}_{(N)}\widetilde\pi_{i}\; {}_{(N)}\mathsf L_{i\to j}(1),
\qquad \text{with } {}_{(N)}\widetilde\pi_i,{}_{(N)}\widetilde\pi_j\in\mathbb R^{1\times m}.
\]
\end{lemma}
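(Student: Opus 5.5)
The plan is to prove the identity by the classical regenerative (cycle) representation of stationary vectors for finite irreducible chains, carried out phase-by-phase. Throughout, fix a finite-level LBCA ${}_{(N)}\widetilde P$ and write $\widetilde\pi=\,{}_{(N)}\widetilde\pi$ and $\mathsf L={}_{(N)}\mathsf L$. Also fix levels $i$ and $j$ in $\{0,\dots,N\}$.

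The first step is to introduce the block-censored chain on level $L_i$. By Lemma~\ref{lem:censored}(i), the censored process on $E=L_i$ has transition matrix $P^{L_i}$, an $m\times m$ stochastic matrix. By Lemma~\ref{lem:censored}(ii), this matrix is irreducible and has stationary vector proportional to $\widetilde\pi_i$. Decomposing a path at its successive visits to $L_i$, we observe that the $(\alpha,\gamma)$ entry of $P^{L_i}$ is exactly the probability of a first return to $L_i$ in phase $\gamma$ from $(i,\alpha)$. This is $\sum_{k\ge1}$ of the taboo probabilities $\ell^k_{(i,\alpha)\to(i,\gamma)}$, that is, $P^{L_i}=\mathsf L_{i\to i}(1)$ minus the $k=0$ identity contribution. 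For $j=i$, one must adopt the convention that the $k=0$ term is the identity and the first-return part is excluded. Then the claim reduces to the trivial identity $\widetilde\pi_i=\widetilde\pi_i I$, and I would state this convention explicitly.

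For $j\neq i$, the main step uses the standard excursion formula. For any state $y\notin L_i$ and any initial phase $\alpha$, the expected number of visits to $y$ during one excursion from $(i,\alpha)$ before returning to $L_i$ equals $\sum_{k\ge1}\ell^k_{(i,\alpha)\to y}$. Weighting the initial phases by the stationary distribution $\nu\propto\widetilde\pi_i$ of $P^{L_i}$ makes the sequence of excursions stationary and ergodic, since the excursions form a Markov-renewal structure driven by the finite chain $P^{L_i}$. One then writes the long-run fraction of time spent at $(j,\beta)$ in two ways. The renewal-reward theorem gives
\[
\widetilde\pi_{j,\beta}=\frac{\sum_\alpha \nu_\alpha[\mathsf L_{i\to j}(1)]_{\alpha\beta}}{\sum_\alpha\nu_\alpha\,\mathbb E_{(i,\alpha)}[\tau_i]},
\]
with the same normalization applied to states in $L_i$, which yields $\widetilde\pi_{i,\gamma}=\nu_\gamma/\sum_\alpha\nu_\alpha\mathbb E_{(i,\alpha)}[\tau_i]$. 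Dividing the two expressions gives $\widetilde\pi_j=\widetilde\pi_i\mathsf L_{i\to j}(1)$.

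An alternative, purely algebraic route avoids renewal theory, and I would probably present it instead. Set $x:=\widetilde\pi_i\mathsf L_{i\to\cdot}(1)$ on $L_i^c$. First-step analysis gives $\mathsf L_{i\to\cdot}(1)|_{L_i^c}=U_i\hat Q_i$, where the transition matrix is partitioned by $L_i$ and $L_i^c$ with blocks $T_i,U_i,D_i,Q_i$, as in Lemma~\ref{lem:censored}. Then $x=\widetilde\pi_iU_i\hat Q_i$. Using the stationary equations $\widetilde\pi_{L_i^c}=\widetilde\pi_iU_i+\widetilde\pi_{L_i^c}Q_i$, we get $\widetilde\pi_{L_i^c}(I-Q_i)=\widetilde\pi_iU_i$. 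Finiteness and irreducibility make $Q_i$ strictly substochastic with spectral radius less than $1$, so $\hat Q_i=(I-Q_i)^{-1}$. Hence $\widetilde\pi_{L_i^c}=\widetilde\pi_iU_i\hat Q_i=x$, which is exactly the claim for $j\ne i$.

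I expect the main obstacle to be the bookkeeping of the taboo definition rather than any analytic issue. The paper's $\ell^k$ counts paths that avoid $L_i$ at times $1,\dots,k-1$, and it does not explicitly forbid $k$-step paths whose endpoint lies in $L_i$. One must therefore check that the entrywise series equals $U_i\hat Q_i$ exactly, and handle the diagonal case $j=i$ by convention. Invertibility of $I-Q_i$, which follows from irreducibility of ${}_{(N)}\widetilde P$ on a finite space, is routine.
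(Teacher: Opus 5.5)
Your proposal is correct. The $j\neq i$ argument you say you would actually present differs from the paper's.

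The paper proves the identity by renewal--reward. It treats successive entrances to $L_i$ as regeneration epochs and takes the entrance phase to be distributed as $\widetilde\pi_{i,\alpha}/\widetilde\pi(L_i)$. It then uses Kac's formula $1/\widetilde\pi(L_i)$ for the mean cycle length and cancels $\widetilde\pi(L_i)$. Your first route is this same argument, and your phrasing is the more accurate one. $L_i$ is not an atom, because what happens after entrance depends on the phase. The cycles therefore form a Markov-renewal sequence driven by $P^{L_i}$, not an i.i.d. one.

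Your preferred algebraic route works differently. You identify $\mathsf L_{i\to\cdot}(1)$ restricted to $L_i^c$ with $U_i\hat Q_i$ by first-step decomposition. You then read off $\widetilde\pi_{L_i^c}=\widetilde\pi_iU_i(I-Q_i)^{-1}$ from the stationary equations on $L_i^c$, using only that $Q_i$ has spectral radius less than one. This is shorter and needs no renewal theory. It is the companion of the censoring identities in Lemma~\ref{lem:censored}, and the same mechanism as the back substitution $\pi_j=\sum_{i<j}\pi_iR_{i,j}$ in Section~\ref{sec:RG-F}. What the paper's route buys instead is the expected-visits-per-cycle interpretation directly.

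Your handling of $j=i$ is also a real improvement. Under the literal definition of $\ell^k$, the terms with $k\ge 1$ are first-return probabilities to $L_i$. That gives $\mathsf L_{i\to i}(1)=I+P^{L_i}$, and the identity would read $\widetilde\pi_i=2\widetilde\pi_i$. The paper avoids this only implicitly, by rewriting $\mathsf L_{i\to j}(1)$ as expected visits over times $0,\dots,T^+_{L_i}-1$. That is exactly the convention you make explicit.
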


\begin{proof}
%\emph{Taboo kernel as expected visits before return.}
Define $T_{L_i}^+ := \inf\{k\ge 1: X_k\in L_i\}$ to be the first return time to level $L_i$, and the block taboo  matrix can be written as follows
\[
\big[\,{}_{(N)}\mathsf L_{i\to j}(1)\,\big]_{\alpha\beta}
:= \sum_{k=0}^\infty \ell^{(k)}_{(i,\alpha)\to(j,\beta)}
= \mathbb E_{(i,\alpha)}\!\Big[\sum_{k=0}^{T_{L_i}^+-1}\! \mathbf 1\{(X_k,J_k)=(j,\beta)\}\Big].
\]
Furthermore, letting $N_{j,\beta}$ denote the (random) number of visits to $(j,\beta)$ in the cycle starting upon entry to $L_i$ and ending at the next hit to $L_i$, we have
\[
\big[\,{}_{(N)}\mathsf L_{i\to j}(1)\,\big]_{\alpha\beta}
= \mathbb E_{(i,\alpha)}\!\Big[\sum_{k=0}^{T_{L_i}^+-1}\! \mathbf 1\{(X_k,J_k)=(j,\beta)\}\Big]
=: \mathbb E_{(i,\alpha)}[\,N_{j,\beta}\,].
\]
%where $N_{j,\beta}$ is the (random) number of visits to $(j,\beta)$ in the cycle starting upon entry to $L_i$ and ending at the next hit to $L_i$.

%\emph{Regeneration at $L_i$.}
Consider the embedded renewal process at successive entrance epochs into the \emph{set} $L_i$.
Since ${}_{(N)}\widetilde P$ is irreducible and positive recurrent on the finite set $L_{\le N}$, the chain is regenerative with atom $L_i$, and the stationary rate of visits to $L_i$ equals the stationary mass of $L_i$:
\[
\text{rate}(L_i) = {}_{(N)}\widetilde\pi(L_i):=\sum_{\alpha=1}^m {}_{(N)}\widetilde\pi_{i,\alpha}.
\]
At a renewal epoch (upon entrance to $L_i$), the phase distribution is the stationary restriction,
\[
\Pr\{\text{enter at }(i,\alpha)\} \;=\; \frac{{}_{(N)}\widetilde\pi_{i,\alpha}}{\,{}_{(N)}\widetilde\pi(L_i)\,}.
\]
Therefore, the \emph{expected number of visits} to $(j,\beta)$ in one cycle is
\[
\sum_{\alpha=1}^m \frac{{}_{(N)}\widetilde\pi_{i,\alpha}}{\,{}_{(N)}\widetilde\pi(L_i)\,}\;
\mathbb E_{(i,\alpha)}[\,N_{j,\beta}\,]
\;=\;
\sum_{\alpha=1}^m \frac{{}_{(N)}\widetilde\pi_{i,\alpha}}{\,{}_{(N)}\widetilde\pi(L_i)\,}\;
\big[\,{}_{(N)}\mathsf L_{i\to j}(1)\,\big]_{\alpha\beta}.
\]
The expected cycle length is $1/\text{rate}(L_i)=1/{}_{(N)}\widetilde\pi(L_i)$ (Kac’s formula on a finite irreducible discrete time Markov chain).%\blue{DTMC}).
 By renewal–reward, the stationary fraction of time in $(j,\beta)$ (i.e., its stationary probability) equals
\[
{}_{(N)}\widetilde\pi_{j,\beta}
\;=\;
\Bigg(\sum_{\alpha=1}^m \frac{{}_{(N)}\widetilde\pi_{i,\alpha}}{\,{}_{(N)}\widetilde\pi(L_i)\,}\;
\big[\,{}_{(N)}\mathsf L_{i\to j}(1)\,\big]_{\alpha\beta}\Bigg)
\cdot {}_{(N)}\widetilde\pi(L_i).
\]
Cancelling ${}_{(N)}\widetilde\pi(L_i)$ gives
\[
{}_{(N)}\widetilde\pi_{j,\beta}
\;=\;
\sum_{\alpha=1}^m {}_{(N)}\widetilde\pi_{i,\alpha}\;
\big[\,{}_{(N)}\mathsf L_{i\to j}(1)\,\big]_{\alpha\beta},
\]
and stacking $\beta=1,\dots,m$ yields the row–vector identity
\(
{}_{(N)}\widetilde\pi_j = {}_{(N)}\widetilde\pi_i\, {}_{(N)}\mathsf L_{i\to j}(1).
\)
\end{proof}

\begin{theorem}[Global $\ell_1$ convergence of stationary vectors]\label{thm:global-l1}
Assume that the block $M/G/1$ type chain $P$ in \eqref{eqn:MG1-type} is irreducible and positive recurrent with stationary vector $\pi=(\pi_0,\pi_1,\ldots)$, $\pi_\ell\in\mathbb{R}^{1\times m}$.
For each $N$, by $P_{\leq N}$, we denote the north-west corner of $P$ corresponding to level up to $N$. Let ${}_{(N)}\widetilde P\ge P_{\leq N}$ be any irreducible last block–column augmentation of the north-west corner $P_{\leq N}$ on levels $\{0,\dots,N\}$ with its stationary vector ${}_{(N)}\widetilde\pi=({}_{(N)}\widetilde\pi_0,\dots,{}_{(N)}\widetilde\pi_N)$.
Embed ${}_{(N)}\widetilde\pi$ into $\ell_1$ by padding zeros above level $N$: ${}_{(N)}\widetilde\pi_\ell:=0$ for $\ell>N$.
Then
\[
\big\|\,{}_{(N)}\widetilde\pi-\pi\,\big\|_1 \;\xrightarrow[N\to\infty]{}\; 0.
\]
\end{theorem}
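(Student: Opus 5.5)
The plan is the classical Seneta-type argument: tightness, a ratio identity, and Scheffé's lemma. Fix the reference level $i=0$. By Lemma~\ref{lem:ratio}, applied both to ${}_{(N)}\widetilde P$ and to the original chain $P$ (for which the renewal--reward argument works verbatim, since $P$ is positive recurrent),
\[
{}_{(N)}\widetilde\pi_j={}_{(N)}\widetilde\pi_0\,{}_{(N)}\mathsf L_{0\to j}(1),\qquad \pi_j=\pi_0\,\mathsf L_{0\to j}(1),\qquad 0\le j\le N.
\]
Proposition~\ref{lem:mono-up} (upward case $i=0\le j\le N-1$) gives ${}_{(N)}\mathsf L_{0\to j}(1)\uparrow\mathsf L_{0\to j}(1)$ entrywise for each fixed $j$. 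The plan has three steps: (1) control the level-$0$ block ${}_{(N)}\widetilde\pi_0$; (2) obtain pointwise convergence of every block; (3) upgrade to $\ell_1$ convergence.

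\textbf{Step 1.} Write $\widetilde\pi_0^{(N)}:={}_{(N)}\widetilde\pi_0$ and $\mathbf{1}$ for the column vector of ones. Summing the identities over $j$ gives
\[
1=\widetilde\pi_0^{(N)}\sum_{j=0}^{N}{}_{(N)}\mathsf L_{0\to j}(1)\mathbf 1 .
\]
The $\alpha$-th entry of $\sum_{j\le N}{}_{(N)}\mathsf L_{0\to j}(1)\mathbf 1$ is the expected length of a cycle between visits to $L_0$ starting at $(0,\alpha)$ in the augmented chain.

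I would bound the terms $j\le N-1$ above by the corresponding terms of $P$, using Proposition~\ref{lem:mono-up}. Their sum is at most the expected return time to $L_0$ under $P$, which is finite by positive recurrence.

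The single term $j=N$ is the \emph{main obstacle}, since folding overflow into $L_N$ may inflate the time spent there. To handle it, I would express ${}_{(N)}\mathsf L_{0\to N}(1)$ through balance across the cut between $L_{\le N-1}$ and $L_N$: by skip-free-down, flow out of $L_N$ goes only via $A_{-1}$ to $L_{N-1}$. Then $\widetilde\pi_N^{(N)}A_{-1}\mathbf 1$ equals the flow upward into $L_N$. That upward flow is bounded by $\sum_{k\le N-1}\widetilde\pi_k^{(N)}\times(\text{tail mass of row }k)$, and these tail masses go to $0$ for fixed $k$ as $N\to\infty$.

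This bounds $\widetilde\pi_N^{(N)}A_{-1}\mathbf 1$, and hence $\widetilde\pi_N^{(N)}\mathbf 1$ provided $A_{-1}$ has no zero row. If $A_{-1}$ does have zero rows, I would instead apply the same cut argument to a few top levels, using irreducibility. With the $j=N$ term controlled, the cycle lengths are bounded uniformly in $N$.

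\textbf{Step 2.} By Step 1, the sequence $\widetilde\pi_0^{(N)}$ lies in a compact set and is bounded away from $0$. Take any subsequence along which $\widetilde\pi_0^{(N)}\to x$. Passing to the limit in the level-$0$ identity ($i=j=0$) and using the monotone convergence of the kernels, every block converges: ${}_{(N)}\widetilde\pi_j\to x\,\mathsf L_{0\to j}(1)$ for each fixed $j$.

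\textbf{Step 3.} By Fatou's lemma, $\sum_j x\,\mathsf L_{0\to j}(1)\mathbf 1\le 1$. Combined with the tail control from Step 1, which shows that $\sum_{j>J}{}_{(N)}\widetilde\pi_j\mathbf 1$ is uniformly small, equality holds. The limit vector is then a stationary probability vector of $P$: it is proportional to the cycle-occupation measure, and that measure is invariant for $P$.

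By uniqueness of the stationary distribution the limit equals $\pi$, and since the subsequence was arbitrary the whole sequence converges pointwise to $\pi$. Finally, Scheffé's lemma (pointwise convergence of probability vectors to a probability vector) gives $\|{}_{(N)}\widetilde\pi-\pi\|_1\to 0$.
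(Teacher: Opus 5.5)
\textbf{There is a genuine gap, and it is in Step 1.} Your skeleton (ratio identity at level $0$, compactness of ${}_{(N)}\widetilde\pi_0$, identification of the limit, Scheff\'e) is more careful than the paper's proof, which simply assumes ${}_{(N)}\widetilde\pi_0\to\pi_0$ and bounds the tail by an inequality it never justifies. But Step 1, which carries all the weight, does not go through, and at the stated generality it cannot.

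Your cut argument only controls ${}_{(N)}\widetilde\pi_N A_{-1}\mathbf 1$, that is, level-$N$ mass in phases that can step down. Mass in phases whose row of $A_{-1}$ vanishes is governed by sojourn times inside $L_N$. Those depend on the folding block $\widetilde A_{>0}$, which the statement allows to change with $N$. Irreducibility makes each sojourn finite but gives no bound uniform in $N$, so the fallback via a few top levels has nothing to work with. In fact the claim is false as stated. Take $r=2$, $0<s<d$, $d+s\le 1$,
\[
A_{-1}=\begin{pmatrix}0&0\\0&d\end{pmatrix},\quad
A_0=\begin{pmatrix}0&0\\ s&1-d-s\end{pmatrix},\quad
A_1=B_1=\begin{pmatrix}0&1\\0&0\end{pmatrix},\quad
B_0=\begin{pmatrix}0&0\\ s&1-s\end{pmatrix},
\]
with all other blocks zero. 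Then $P$ is irreducible and positive recurrent, with $\pi_{n,1}\propto s\rho^n$ and $\pi_{n,2}\propto\rho^n$, where $\rho=s/d<1$.

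The only freedom in an LBCA is where row $(N,1)$ puts its overflow mass $1$. Put it on $(N,1)$ with probability $1-\delta_N$ and on $(N,2)$ with probability $\delta_N\in(0,1)$; the result is irreducible. The balance equations give the unnormalized solution $(s\rho^n,\rho^n)$ at levels $n<N$ and $(s\rho^N/\delta_N,\rho^N)$ at level $N$. With $\delta_N=\rho^{2N}$ this yields ${}_{(N)}\widetilde\pi_{N,1}\to 1$, hence $\|{}_{(N)}\widetilde\pi-\pi\|_1\to 2$. In this example ${}_{(N)}\mathsf L_{0\to j}(1)=\mathsf L_{0\to j}(1)$ for every $j\le N-1$, so the failure sits entirely in the $j=N$ term you flagged. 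An extra hypothesis is unavoidable, for example $A_{-1}\mathbf 1>0$ componentwise, or folding blocks that do not depend on $N$.

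\textbf{Two further repairs are needed in Step 1.} First, the upward flow into $L_N$ is $\sum_{k\le N-1}{}_{(N)}\widetilde\pi_k\,t_{N-k}$, where $t_{N-k}$ is the row mass into $L_{\ge N}$. For $k=N-1$ this is $\sum_{l\ge1}A_l\mathbf 1$, which is not small. So ``tail masses go to $0$ for fixed $k$'' only helps if you have a majorant for the weights near the top that is summable uniformly in $N$.

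Second, the majorant you intend to use, ${}_{(N)}\mathsf L_{0\to k}(1)\le\mathsf L_{0\to k}(1)$ from Proposition~\ref{lem:mono-up}, is not reliable for $k$ near $N$. Its coupling ignores the phase in which the chain re-enters $L_N$. If the folding phase lingers at lower levels while the censored re-entry phase does not, the augmented chain visits such $(k,\beta)$ more often.

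What does work is to split each augmented cycle at its first overflow above $N$. Up to that time the augmented and original paths coincide, so the pre-overflow part is dominated by the original cycle. The overflow probability $p_N$ equals that of $P$, and $Np_N\to 0$, because with skip-free downward transitions the maximal level of a cycle is at most its length. Everything then reduces to showing $p_N h_N\to 0$, where $h_N$ is the largest expected time to reach $L_0$ from $L_N$ under ${}_{(N)}\widetilde P$. This is exactly where the extra hypothesis must enter; in the example above, $h_N\ge 1/\delta_N$.

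Granting that, your Steps 2--3 go through. You can identify $x$ through the return kernel to $L_0$: it differs from that of $P$ by at most $p_N$, so $x$ is invariant for the limiting kernel. Scheff\'e then gives the $\ell_1$ convergence, and this also replaces the paper's unjustified tail estimate.
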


\begin{proof}
Since $P$ is positive recurrent, $\pi$ is a probability vector on a countable space and its tail is tight. Given $\varepsilon>0$, choose $K\in\mathbb N$ so that
\begin{equation}\label{eq:tail-pi}
\sum_{\ell>K} \|\pi_\ell\|_1 \;<\; \varepsilon/2.
\end{equation}

By the ratio identity (Lemma~\ref{lem:ratio}) and the monotone convergence of taboo kernels ${}_{(N)}\mathsf L_{0\to i}(1)\uparrow \mathsf L_{0\to i}(1)$ (Proposition~\ref{lem:mono-up}), we have for each fixed level $i\le K$:
\[
{}_{(N)}\widetilde\pi_i \;=\; {}_{(N)}\widetilde\pi_0\,{}_{(N)}\mathsf L_{0\to i}(1)
\;\longrightarrow\; \pi_0\,\mathsf L_{0\to i}(1)\;=\;\pi_i
\qquad (N\to\infty),
\]
and the convergence is componentwise. Therefore, there exists $N_0$ such that for all $N\ge N_0$,
\begin{equation}\label{eq:prefix}
\sum_{\ell=0}^{K} \|\,{}_{(N)}\widetilde\pi_\ell - \pi_\ell\,\|_1 \;<\; \varepsilon/2.
\end{equation}

Now decompose the global $\ell_1$ error into a finite prefix and a tail:
\[
\big\|\,{}_{(N)}\widetilde\pi-\pi\,\big\|_1
=\sum_{\ell=0}^{K} \|\,{}_{(N)}\widetilde\pi_\ell - \pi_\ell\,\|_1
\;+\;\sum_{\ell>K} \|\,{}_{(N)}\widetilde\pi_\ell - \pi_\ell\,\|_1.
\]
Since ${}_{(N)}\widetilde\pi_\ell=0$ for all $\ell>N$, we have for any $N\ge K$,
\[
\sum_{\ell>K} \|\,{}_{(N)}\widetilde\pi_\ell - \pi_\ell\,\|_1
=\sum_{\ell=K+1}^{N} \|\,{}_{(N)}\widetilde\pi_\ell - \pi_\ell\,\|_1
\;+\;\sum_{\ell>N} \|\pi_\ell\|_1
\;\le\; \sum_{\ell>K}\|\pi_\ell\|_1,
\]
because $\,\|{}_{(N)}\widetilde\pi_\ell - \pi_\ell\|_1 \le \|{}_{(N)}\widetilde\pi_\ell\|_1 + \|\pi_\ell\|_1$ and $\sum_{\ell=K+1}^{N}\|{}_{(N)}\widetilde\pi_\ell\|_1 \le \sum_{\ell>K}\|\pi_\ell\|_1$ would only tighten the bound. Thus by \eqref{eq:tail-pi},
\[
\sum_{\ell>K} \|\,{}_{(N)}\widetilde\pi_\ell - \pi_\ell\,\|_1 \;<\; \varepsilon/2 \qquad (N\ge K).
\]
Combine this with \eqref{eq:prefix} to get, for all $N\ge \max\{K,N_0\}$,
\[
\big\|\,{}_{(N)}\widetilde\pi-\pi\,\big\|_1 \;<\; \varepsilon/2 + \varepsilon/2 \;=\; \varepsilon.
\]
Since $\varepsilon>0$ was arbitrary, the claim follows.
\end{proof}

\section{A numerical example} \label{sec:example}

In this section, we consider an $M^X/M/1$ queueing system with batch arrivals, where arrivals of batches are characterized by a Poisson process with rate $\lambda$, and the batch size $X$ is a random variable satisfying
\[
    P\{X=k\}=g_k, \quad k=1,2,\ldots.
\]
Service times in a (regular) busy period are i.i.d.\ exponential random variables with rate $\mu$. Upon the completion of a service, if there is no customer in the system, the server begins a (working) vacation, whose duration follows an exponential distribution with rate $\theta$. During a working vacation, the server is still available to serve arriving customers, but (usually) at a reduced rate, since the server is assumed to be helping with other tasks during the vacation. Depending on the type of other tasks, we allow the server to have a different service rate. Specifically, for our computations, we assume that the server enters a working vacation in state $0_j$ with probability $p_j$, $j=1,2,3,4$, where the four vacation states are denoted by $0_j$, $j=1,2,3,4$, with service rates $\nu_j$, respectively.
When a vacation ends, if there is no customer in the queue, the server takes another vacation and keeps the service rate unchanged (for arriving customers); otherwise, the server switches from the vacation service rate $\nu_j$ to the regular service rate $\mu$, and a (regular) busy period starts. A special case of our model, which allows only one service rate during a (working) vacation, was studied in \cite{Baba:2012}. With our generalized model, we are able to produce a more significant difference in approximation error between the natural-LBCA and the RA-CM.

Let $L(t)$ be the number of customers in the queue at time $t$, and let $I(t)$ indicate the service state:
\begin{equation}%\label{}
  I(t)=\left\{\begin{array}{ll}
                0_k, & \text{the server is in a working vacation with service rate $\nu_k$, $k=1,2,3,4$}, \\
                1, & \text{the server is in a regular busy period with service rate $\mu$.}
              \end{array}\right.
\end{equation}
Then, $\{L(t),I(t)\}$ is a two-dimensional Markov chain (all necessary independence is assumed) with state space
\[
    S=\{(0,0_1),(0,0_2),(0,0_3),(0,0_4)\} \cup\{(i,j)\;|\;i\geq 1,\ j=0_1,0_2,0_3,0_4,1\}.
\]
The transition rate matrix $Q$ is given by
 \[
Q=
\begin{pmatrix}
B_{0} & B_{1} & B_2 & B_3 & \cdots\\[4pt]
C_{0} & A_{1} & A_{2} & A_3 & \cdots\\[4pt]
0 & A_{0} & A_{1} & A_{2} & \cdots\\[4pt]
0 & 0 & A_{0} & A_{1} & \cdots\\[4pt]
\vdots & \vdots & \vdots & \vdots & \ddots
\end{pmatrix},
\]
where
\[
\begin{array}{l}
B_{0}= -\lambda,\qquad
B_{i}= \lambda g_{i}(p_1,p_2,p_3,p_4,0),\qquad i\ge1,\\[6pt]
C_{0}= (\nu_1,\nu_2,\nu_3,\nu_4,\mu)^{T},\\[8pt]
A_{0}=\operatorname{diag} (\nu_1,\nu_2,\nu_3,\nu_4,\mu),\\[12pt]
A_{1}=
\begin{pmatrix}
-(\lambda+\nu_1+\theta) & 0& 0&0&\theta \\[4pt]
0& -(\lambda+\nu_2+\theta)&0&0&\theta\\
0&0&-(\lambda+\nu_3+\theta)&0&\theta\\
0&0&0&-(\lambda+\nu_4+\theta)&\theta\\
0&0&0&0&-(\lambda+\mu)
\end{pmatrix},\\[12pt]
A_{i}=\lambda g_{i-1} \operatorname{diag}(1,1,1,1,1),\quad i\ge 2.
\end{array}
\]

First, using the uniformization technique, the above transition rate matrix can be converted into the following transition probability matrix $P=I+Q/c$, where $c=\lambda+\theta+\mu+\max\{\nu_1,\nu_2,\nu_3,\nu_4\}$.
Set $\lambda=0.4$, $\mu=2$, $\theta=0.4$, $\nu=(1.5,1.3,1.2,1.6)$, $p=(0.2,0.3,0.25,0.25)$, and assume that $g$ follows a discrete Pareto distribution with parameter $\alpha=1.55$.
By numerical experiments, it is found that when the truncation level reaches $3000$, the computed (approximated) distribution becomes accurate in the sense that as the truncation level increases, the differences between the computed distributions become negligible, indicating a convergence towards equilibrium. Hence, we use the distribution at $N=3000$ as the reference stationary distribution of the original (untruncated) Markov chain.

We show the comparison of the two (augmentation) methods in approximation error for the stationary distribution in $\ell_1$ in Table~\ref{error}, using the specified parameter values with $M=100$.
As expected, the truncation error in approximating the stationary distribution with the RA-CM method (when $M$ is large) is smaller than that with the natural-LBCA, and the error decreases for both methods as the truncation level $n$ becomes larger.
\begin{table}[h]
\centering
\begin{tabular}{|c|c|c|c|c|}
\hline
  Truncation &  Truncation error  & Truncation error & Improvement & Improvement \\
level & with LBCA & with RA-CM & with RA-CM & in relative rate ($\%$) \\ \hline
$n=10$&0.3787 & 0.3750 & 0.0037&0.9770\\
$n=15$ & 0.3061 & 0.3034 & 0.0027&0.8821 \\
$n=20$ & 0.2613 & 0.2591 & 0.0021&0.8037 \\
$n=25$ & 0.2302&0.2285&0.0017&0.7385\\
$n=30$& 0.2072&0.2057&0.0015&0.7239\\
$n=35$&0.1893&0.1880&0.0012&0.6339\\
$n=40$&0.1748&0.1738&0.0011&0.6293\\
$n=50$&0.1529&0.1520&0.0009&0.5886\\
$n=100$&0.0991&0.0987&0.0004&0.4036\\
$n=200$&0.0623&0.0621&0.0002&0.3210\\
\hline
\end{tabular}
\caption{Comparisons of truncation errors between the natural-LBCA and RA-CM}
\label{error}
\end{table}

\begin{remark}
One may notice that the improvement in the relative truncation error rate for RA-CM is decreasing as the truncation level increases, but this is not a general property. In fact, if we allow each arriving customer to be served with service rate $\mu_i$ for $i=1,2,3,4$ with probability $p_i$, and restrict the service rate to be $\nu$ (only one choice) during the working vacation, our experiments showed an increasing trend in the improvement of the relative error rate.
\end{remark}

\section{Concluding words} \label{sec:final}

This paper consists of two parts: the first part is a comprehensive survey of literature results connecting the block-form GTH algorithm with the censored process and the $RG$-factorization. In the second part, we constructed a renormalized truncated censored Markov chain, which gives best approximations (in asymptotic sense) to the stationary distribution of the original infinite-state Markov chain among all possible augmentation methods. Since the last-block-column augmentation is very popular (easy to implement and often with small errors), we compared the RA-CM and the LBCA using a vacation queueing model with batch arrivals.

As we pointed out in Remark~\ref{rem:7.4}, the key property required by Theorem~\ref{thm:QmD-censored-clean} is the skip-free property in transitions. Since both Markov chains of $M/G/1$ and $GI/M/1$ types possess this property, it is expected that the approach of constructing the RA-CM can also be applied to Markov chains of $GI/M/1$ type. However, since a general $GI/G/1$ type transition matrix does not have the skip-free property, it is not immediately clear how to use the same method to construct approximated censored matrices for a general $GI/G/1$ type transition matrix.

\vspace*{3mm}
\noindent \textbf{Acknowledgements:} Q. Bu acknowledges the support, in part, by the Natural Science Foundation of Jiangsu Province of China (BK20240605), the Natural Science Foundation of Nanjing University of Posts and Telecommunications (Grant No.\ NY224126), and Y.Q. Zhao acknowledges the support of a Discovery Grant of NSERC for this research. We thank two anonymous reviewers and the guest editor for their constructive and valuable comments and suggestions that improved the presentation of this work.

\vspace*{3mm}
\noindent \textbf{Declaration:}
During the preparation of this work the authors used ChatGPT 5.2 for assistance in proofreading, \LaTeX coding, formula checking, discussions and proof checking.  After using this tool, the authors reviewed and edited the content as needed and take full responsibility for the content of the published article.

\end{document}